\newtheorem{theorem}{Theorem}[section]
\newtheorem{proposition}{Proposition}[section]
\let\c@proposition\c@theorem
\newtheorem{corollary}[theorem]{Corollary}
\newtheorem{lemma}[theorem]{Lemma}
\numberwithin{equation}{section}
\theoremstyle{definition}
\newtheorem{definition}[theorem]{Definition}
\newtheorem{warning}[theorem]{Warning}
\newtheorem{remark}[theorem]{Remark}
\newtheorem{observation}[theorem]{Observation}
\def\L{\mathbf{L}}
\def\K{\mathcal{K}}
\def\M{\mathcal{M}}
\def\N{\mathcal{N}}
\def\A{\mathcal{A}}
\def\Z{\mathbb{Z}}
\def\Q{\mathbb{Q}}
\def\disc{\operatorname{disc}}
\def\Aut{\operatorname{Aut}}
\def\id{\operatorname{id}}
\def\ord{\operatorname{ord}}
\def\iq.{\emph{i.q\PERIOD}}
\def\cf.{\emph{cf.}}
\def\viz.{\emph{viz\PERIOD}}
\def\vs.{\emph{vs\PERIOD}}
\begin{document}

\title[Real Representatives of Equisingular Strata of Simple Quartics]{Real Representatives of Equisingular Strata of Simple Quartic Surfaces}

\author[\c{C}.~G\"{u}ne\c{s} ~Akta\c{s} ]{\c{C}\.{i}sem G\"{u}ne\c{s} Akta\c{s}}
\address{
 Department of Mathematics, Bilkent University\\
 06800 Ankara, Turkey} \email{cisem@fen.bilkent.edu.tr}

\thanks{The author was supported by the \textit{T\"{U}B\.{I}TAK} grant $116$F$211$}

\subjclass[2010]{Primary 14J28; Secondary  14J10, 14J17}

\keywords{Projective model, $K3$-surface, complex quartic, singular quartic}

\date{}

\dedicatory{}

\begin{abstract}

We develop an algorithm detecting real representatives in equisingular strata of projective models of $K3$-surfaces. We apply this algorithm to spatial quartics and find two new examples of real strata without real representatives.
As a by-product, we also give a new proof for the only previously known example of plane sextics.
\end{abstract}

\maketitle


\section{Introduction}\label{introduction}
Throughout the paper, all varieties are over the field $\mathbb{C}$ of complex numbers.

\subsection{Motivation and historical remarks}
It is a wide open problem what kind of singularities a projective surface or a curve of a given degree can have. In general, this problem seems hopeless. However, in the case of $K3$-surfaces, thanks to the global Torelli theorem~\cite{K3} and subjectivity of the period map~\cite{periodmap}, the equisingular deformation classification of surfaces with any given polarization becomes a mere computation. Various deformation classification problems for $K3$-surfaces have been intensively studied in the literature. Any model of $K3$-surfaces is necessarily \emph{simple}, \emph{i.e.}, has at worst simple singularities. The most popular projective models of $K3$-surfaces are sextic curves $C\subset\mathbb{P}^2$ (where a $K3$-surface appears as the double plane ramified along $C$) and quartic surfaces $X\subset\mathbb{P}^3$. A complete list of all possible combinations of simple singularities realized by sextics and quartics was found by Yang~\cite{Yang.sextics,Yang}.  Using the arithmetical reduction~\cite{Alex1}, Shimada~\cite{Shimada.Maximizing} gave a complete description of the moduli spaces of \emph{maximizing} sextics. Later, based on the same approach,  Degtyarev and Akyol~\cite{Alex2} completed the equisingular deformation classification of plane sextics. This approach was also used by G\"{u}ne\c{s} Akta\c{s}~\cite{Cisem1} to obtain a complete description of the equisingular strata of the so-called \emph{nonspecial} simple quartics (see section~\ref{simple.quartics} for the definition). In the meanwhile, Shimada~\cite{Shimada.connEllK3} has listed the connected components of the moduli space of the Jacobian elliptic $K3$-surfaces (which can be regarded as $K3$-surfaces with a $\mathbf{U}$-polarization).

Also worth mentioning is the vast literature on the deformation classification problems in the \emph{real} case, see,\emph{ e.g.}, the classification of real (nonsingular) quartics by Kharlamov~\cite{Kharlamov}, the study on moduli space of real $K3$-surfaces  by Nikulin~\cite{Niku4} or the recent
work on quartic spectrahedra by Degtyarev and Itenberg~\cite{AI} and Ottem \emph{et al.}~\cite{Sturmfels}.

Typically, over $\mathbb{C}$, one deals with singular models, whereas, over $\mathbb{R}$, with very few exceptions (see, \emph{e.g.}~\cite{AI} and~\cite{Sturmfels}), one usually confines oneself to the smooth ones. Certainly, one could have considered singular real models up to equisingular  equivariant deformations, but the results in lists would be huge. (More importantly, one would have to check, on a case by case basis, the validity of the lattice theoretical reduction, which does not hold automatically, see, \emph{e.g.}~\cite{Degtyarev:finiteness}.) In this paper, we make an attempt to bridge this gap from a slightly different perspective, namely, we discuss the existence of real representatives in the real equisingular strata of complex singular models.
\subsection{Principal results}
This paper originates from my paper~\cite{Cisem1}, where I started a systematic equisingular deformation classification of simple quartics. In this paper, based on a more general perspective, we study all models of $K3$-surfaces of a certain fixed \emph{kind} (see 
\autoref{projective.models.of.K3surfaces}).
Denote by $\M$ the space of all models $f\colon X\rightarrow \mathbb{P}^n$; it is divided into equisingular strata $\M(S)$ according to sets $S$ of simple singularities.  Each stratum $\M(S)$  splits further into its connected components, which are the equisingular deformation classes. We will mainly work with the subspace $\M_1\subset\M$ consisting of the nonspecial models and the respective strata $\M_1(S)=\M(S)\cap\M_1$; however, having further applications in mind, we will also discuss the general case whenever possible.

Fix a \emph{real structure} (\emph{i.e.}, an antiholomorphic involution) $\operatorname{conj}\colon \mathbb{P}^n\rightarrow\mathbb{P}^n$, then sending $f\colon X\rightarrow \mathbb{P}^n$ to $\operatorname{conj}\circ f\colon \bar{X}\rightarrow \mathbb{P}^n$ induces a real structure $\operatorname{c}: \M\rightarrow\M$. This real structure $\operatorname{c}$ depends on the choice of $\operatorname{conj}$; however the induced action on the connected components of equisingular strata is well defined. A connected component $\mathcal{D}\subset\mathcal{M}(S)$ is called \textit{real} if $\operatorname{c}(\mathcal{D})=\mathcal{D}$. Clearly, each stratum $\mathcal{M}(S)$ consists of real and pairs of complex conjugate components; this classification of components is given in~\cite{Alex2} for sextics  and in~\cite{Cisem1} for (nonspecial) quartics.

Although it is quite common that a real variety may have no real points, very few examples of equsingular deformation classes with this property are known. Clearly, any class $\mathcal{D}\subset\M(S)$ containing a real model is real. However, the converse is not true, but the only known counterexample is the stratum $\M_1(\mathbf{A}_7\oplus\mathbf{A}_6\oplus\mathbf{A}_5)$ of the space of sextics found in \cite{Alex2}.
In the present paper, we study phenomena of this kind in the space of simple quartics; in particular, we find two more examples as above. Our principal result is the following theorem.

\begin{theorem}\label{principal.result}
Let $\M$ be the space of spatial quartics, and let $S_1=\mathbf{A}_7\oplus\mathbf{A}_6\oplus\mathbf{A}_3\oplus\mathbf{A}_2$ and $S_2=\mathbf{D}_7\oplus\mathbf{A}_6\oplus\mathbf{A}_3\oplus\mathbf{A}_2$. Any real component of any stratum $\mathcal{M}_1(S)$ other than $\mathcal{M}_1(S_1)$ or $\mathcal{M}_1(S_2)$ contains a real surface. The strata $\mathcal{M}_1(S_1)$ and $\mathcal{M}_1(S_2)$ consist of one real component each but they contain no real surfaces.
\end{theorem}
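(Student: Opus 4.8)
The plan is to translate the geometric problem into arithmetic via the global Torelli theorem and the standard lattice-theoretic reduction for $K3$-surfaces. First I would recall the period-theoretic description of $\M_1(S)$: a nonspecial quartic corresponds to a $K3$-surface $X$ equipped with a polarization $h$ with $h^2=4$, and the exceptional divisors of the singularities span a negative definite root sublattice of type $S$ inside $\mathrm{NS}(X)$. One forms the hyperbolic lattice $N = \Z h \oplus (\text{root lattice } S)$, saturated appropriately, and the equisingular stratum is governed by the orbit space of period points in the period domain of the orthogonal complement, modulo the group $\mathrm{O}(N)$-induced automorphisms (more precisely, the subgroup fixing $h$ and preserving the set of classes of exceptional curves and the chamber structure cut out by $(-2)$-classes). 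Real structures on $X$ correspond, on the level of periods, to anti-isometries; a real surface exists in a component $\mathcal{D}$ precisely when the corresponding arithmetic datum admits an involution of the transcendental lattice acting as $-1$ on the holomorphic $2$-form and compatible with the chosen polarization and configuration — equivalently, when a certain period point can be chosen to lie in the real part of the period domain inside the given $\mathrm{O}(N)$-orbit.

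Next I would set up the concrete arithmetic. For each of the finitely many sets $S$ appearing in Yang's list of nonspecial quartics, one computes the relevant lattice $N$, its discriminant form, and the orthogonal complement $T = N^{\perp}$ in the $K3$-lattice $\L = 2\mathbf{E}_8 \oplus 3\mathbf{U}$ (using Nikulin's existence and uniqueness results for primitive embeddings, as in \cite{Cisem1,Alex2}). The real components are enumerated as in \cite{Cisem1}. The key object controlling the existence of a real representative is whether $T$ (with its discriminant form and the sign structure $(2,\ast)$ on the positive-definite part coming from the period) admits an appropriate involution — concretely, an isometry $\tau$ of $T$ with $\tau$ acting suitably on the discriminant group so that it glues with a fixed involution of $N$ preserving $h$ and the configuration. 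This is exactly the content of the algorithm announced in the abstract: one reduces the existence of a real model to a finite, checkable condition on the pair $(N,T)$ and the automorphism group $\mathrm{O}_h(N)$ (the subgroup fixing $h$), and then runs through all real strata.

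The bulk of the argument is then computational: for all $S$ other than $S_1$ and $S_2$ one exhibits the required involution (often the obvious $-1$ or a reflection works, once the polarization and the root classes are handled), and for $S_1=\mathbf{A}_7\oplus\mathbf{A}_6\oplus\mathbf{A}_3\oplus\mathbf{A}_2$ and $S_2=\mathbf{D}_7\oplus\mathbf{A}_6\oplus\mathbf{A}_3\oplus\mathbf{A}_2$ one shows no such involution exists. The obstruction should be located in the interaction between the odd-order components of the discriminant form (here the primes $3$ and $7$, visible from the summands $\mathbf{A}_6$, $\mathbf{A}_2$, and $\mathbf{A}_7$ or $\mathbf{D}_7$) and the constraint that a real structure must act as $-1$ on the transcendental part: one checks that every isometry of $N$ fixing $h$ acts on the discriminant group in a way that cannot be matched by any anti-isometry on the transcendental side with the correct sign on the period — an incompatibility of the spinor norm / determinant or of the action on the $p$-primary components. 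Separately, one must verify that each of $\mathcal{M}_1(S_1)$ and $\mathcal{M}_1(S_2)$ is a single component (connectedness of the stratum), which again follows from the uniqueness of the primitive embedding and surjectivity of the relevant reduction map, as established in \cite{Cisem1}.

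I expect the main obstacle to be twofold. First, establishing that the lattice-theoretic reduction is valid for these singular models — unlike the smooth case, this does not come for free (the excerpt explicitly flags this, citing \cite{Degtyarev:finiteness}), so one must verify injectivity and surjectivity of the period map onto the arithmetic data for the kind of models under consideration. Second, and more delicate, is proving the \emph{non}-existence for $S_1$ and $S_2$: producing a real representative is a matter of exhibiting one involution, but ruling it out requires controlling the \emph{entire} group $\mathrm{O}_h(N)$ and showing that no element of it, composed with an anti-isometry of $T$, yields a valid real structure. That is where the delicate discriminant-form bookkeeping at the primes $3$ and $7$ — and the distinction between the $\mathbf{A}_7$ and $\mathbf{D}_7$ cases, which must both fail — does the real work.
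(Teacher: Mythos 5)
Your overall framework --- arithmetical reduction via Torelli, then exhibiting an involution for most strata and ruling one out for $S_1,S_2$ --- matches the paper's strategy, and you correctly note that connectedness of $\mathcal{M}_1(S_1)$ and $\mathcal{M}_1(S_2)$ comes from the earlier classification. But there is a genuine gap in the non-existence half, which is the heart of the theorem. You say that ruling out a real representative ``requires controlling the \emph{entire} group $\mathrm{O}_h(N)$'' together with all compatible involutions on the transcendental side; as stated this is not a finite procedure, because for $\operatorname{rk}T=3$ the group $O(T)$ is typically infinite and $T$ need not even be determined (only its genus is). The paper's key observation, which your proposal is missing, is Proposition~\ref{rankT=3}: since $\operatorname{rk}S_1=\operatorname{rk}S_2=18$, the transcendental lattice has rank $3$, and \emph{every} involutive skew-autoisometry of a rank-$3$ lattice is $\pm$ a reflection $t_a$. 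Reflections are parametrized by primitive vectors $a$ with $a^2\mid 2\exp(\disc N)$, and the whole problem is then recast (Lemma~\ref{main.lemma}, Corollary~\ref{corlem2}) as a finite search over pairs $(n,\kappa)$ with $\kappa\in\disc\tilde S_h$ of prescribed order and square, together with a genus-existence check via Nikulin's theorem. Without this reduction you have no way to certify non-existence.

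Second, you locate the obstruction ``in the interaction between the odd-order components of the discriminant form (here the primes $3$ and $7$)''; in the paper the odd primes play only an auxiliary role (they constrain the admissible values of $a^2$ through determinant conditions on cyclic $p$-primary summands, Lemma~\ref{orth.summand}). The actual contradiction for both $S_1$ and $S_2$ lives entirely in the $2$-primary part: for every admissible kernel generator $\vartheta=\kappa_{[2]}+n\alpha_{[2]}$ one finds an element $\nu$ of order at least $4$ in $\K^{\bot}/\K$ on which $\varphi\oplus t_a$ acts as $-1$, violating the requirement that the extension induce the identity on $\disc\widetilde M_a$ (Observation~\ref{observation123}). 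Finally, you do not mention the perturbation argument (Proposition~\ref{real.pert}), by which the vast majority of the non-maximizing strata are handled at once as invariant perturbations of the $37$ real maximizing types, leaving only about a hundred cases for the reflection algorithm.
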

Theorem~\ref{principal.result} is proved in section~\ref{Applications}. As an important by-product of our approach, we give a simpler proof for the following example of Degtyarev and Akyol~\cite{Alex2}.
\begin{proposition}[Proposition 2.6 in~\cite{Alex2}]\label{byproduct}
The stratum $\mathcal{M}_1(\mathbf{A}_7\oplus\mathbf{A}_6\oplus\mathbf{A}_5)$ in the space $\M$ of plane sextics contains no real curves.
\end{proposition}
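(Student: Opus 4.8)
The plan is to translate the statement into the nonexistence of a suitable involutive auto-isometry of the $K3$-lattice and then to dispose of all candidates by a discriminant-form computation. A plane sextic $C$ of the relevant kind produces a $K3$-surface $X$, the double cover of $\mathbb{P}^{2}$ branched along $C$, carrying the polarization class $h$ with $h^{2}=2$ and the lattice $S=\mathbf{A}_{7}\oplus\mathbf{A}_{6}\oplus\mathbf{A}_{5}$ spanned by the classes of exceptional divisors over the singular points; the stratum $\M_{1}(\mathbf{A}_{7}\oplus\mathbf{A}_{6}\oplus\mathbf{A}_{5})$ is recorded by the rigid polarization lattice $\dS_{h}$ (the primitive closure of $\Z h\oplus S$ in $\mathrm{NS}(X)$) together with its primitive embedding into the $K3$-lattice $\L=2\mathbf{E}_{8}\oplus 3\mathbf{U}$; set $\T=\dS_{h}^{\perp}$. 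A short computation with $\disc(\Z h\oplus S)=\Z/2\oplus\Z/8\oplus\Z/7\oplus\Z/6$ shows that this group has no isotropic vector compatible with the stratum (such a vector would either have trivial $h$-component, hence enlarge the root system, or not exist), so $\dS_{h}=\Z h\oplus S$, $\operatorname{rk}\T=22-19=3$, $\T$ has signature $(2,1)$, and $\disc\T\cong-\disc\dS_{h}$ has order $672=2^{5}\cdot3\cdot7$ and length $3$. By the global Torelli theorem~\cite{K3} and surjectivity of the period map~\cite{periodmap}, the (unique) component of $\M_{1}(\mathbf{A}_{7}\oplus\mathbf{A}_{6}\oplus\mathbf{A}_{5})$ contains a real curve if and only if $\L$ carries an involutive isometry $\varphi$ with $\varphi(\dS_{h})=\dS_{h}$, $\varphi(h)=h$, $\varphi|_{S}$ permuting the components of the resolution of each singular point, and $\varphi$ acting on the positive-definite plane $\langle\operatorname{Re}\omega,\operatorname{Im}\omega\rangle\subset\T\otimes\mathbb{R}$ with eigenvalues $+1$ and $-1$; equivalently, $\T^{\varphi}$ (hence also $\T_{\varphi}$) has exactly one positive square. (Realising the period $\varphi$-invariantly with $\mathrm{NS}=\dS_{h}$ and in the correct chamber is the standard genericity argument, automatic here because $\operatorname{rk}\T=3$.)

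I would then enumerate the candidates. Since $\mathbf{A}_{7},\mathbf{A}_{6},\mathbf{A}_{5}$ have pairwise distinct ranks and $h$ is rigid, every such $\varphi$ preserves each summand of $S$ and acts on it, through a graph automorphism of the corresponding Dynkin chain, by the identity or by the order-$2$ reversal; let $R\subseteq\{\mathbf{A}_{7},\mathbf{A}_{6},\mathbf{A}_{5}\}$ be the set of reversed summands. The reversal acts on $\disc\mathbf{A}_{p}=\Z/(p{+}1)$ by $x\mapsto-x$ and trivially on the $\Z/2$ coming from $h$; since composing $\varphi$ with the deck transformation of $X\to\mathbb{P}^{2}$ toggles $R$ to its complement, it suffices to rule out each $R$. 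By Nikulin's gluing, $R$ determines the induced automorphism $\bar\varphi$ of $\disc\dS_{h}\cong-\disc\T$, which must be realised by an involution $\varphi|_{\T}$ of $\T$ whose $(+1)$-eigenlattice has exactly one positive square. As $\varphi|_{\T}\neq\pm\operatorname{id}$, the eigenlattices of $\varphi|_{\T}$ are a rank-$1$ positive-definite lattice $\langle 2m\rangle$ and a rank-$2$ lattice $N$ of signature $(1,1)$, and $\disc\T$ is the gluing of the cyclic group $\disc\langle 2m\rangle$ with $\disc N$ (which has at most two generators).

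The contradiction arrives in two flavours. If $R$ contains $\mathbf{A}_{6}$ or $\mathbf{A}_{5}$, then the corresponding $\Z/7$- or $\Z/3$-block of $\disc\T$ must sit inside $\disc N$ (there is no odd gluing), while the $2$-part $\Z/2\oplus\Z/2\oplus\Z/8$ of $\disc\T$ forces $\disc N$ to contribute two more generators because $\disc\langle 2m\rangle$ is cyclic; hence $\disc N$ would need three generators, impossible for $\operatorname{rk}N=2$ --- and this conclusion is insensitive to the gluing. If $R\subseteq\{\mathbf{A}_{7}\}$, then $\disc\langle 2m\rangle$ must absorb the whole odd part $\Z/21$ of $\disc\T$, which essentially determines $m$ and leaves $\disc N$ a $2$-group that cannot be the discriminant form of a rank-$2$ lattice of signature $(1,1)$: either by a direct $p$-adic comparison of the forms on the $\Z/3$- and $\Z/8$-blocks, or, where that is delicate, because the resulting $\disc N$ would have Gauss--Milgram signature $\not\equiv\operatorname{sign}(N)=0\pmod8$. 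In either case no $\varphi$ exists, and $\M_{1}(\mathbf{A}_{7}\oplus\mathbf{A}_{6}\oplus\mathbf{A}_{5})$ contains no real curve.

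The hard part is the second case once genuine gluing between the eigenlattices of $\varphi|_{\T}$ is allowed: the \emph{global} Gauss--Milgram signature is then automatically consistent, so the obstruction has to be extracted from the individual $p$-adic components of the discriminant forms, in combination with the generator bound for a rank-$2$ lattice. One must also fix the notion of admissible symmetry at an $\mathbf{A}_{2k}$-point and verify that an abstract isometry with the listed properties is genuinely induced by a real structure; both points are routine but should be spelled out for a complete argument.
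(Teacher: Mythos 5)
Your reduction is sound and is essentially the paper's: since $\operatorname{rk}T=3$ and an involution reversing the positive sign structure cannot be $\pm\operatorname{id}$, it must be $\pm$ a reflection, so its eigenlattices are a positive rank-one lattice $\Z a$ and a rank-two lattice $N'$ of signature $(1,1)$ (this is Proposition~\ref{rankT=3} and the set-up of \S\ref{section.reflection}; the paper prefers to glue $\Z a$ to $\tilde S_h$ and test the existence of $N'$ by Theorem~\ref{th.N.existence}, whereas you glue $\Z a$ to $N'$ inside $T$, which is the equivalent first option mentioned after \eqref{fieT}). You could even have simplified your own scheme: since $\ell_2(\disc T)=3=\ell_2(\disc\Z a)+\ell_2(\disc N')$ at best, and the kernel of $T\supset\Z a\oplus N'$ is $2$-torsion, that kernel is forced to be trivial, so there is in fact no gluing at all between the eigenlattices.

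However, both halves of your case analysis have genuine gaps. In the first case your contradiction rests on the claim that a group containing $\Z/2\oplus\Z/2\oplus\Z/8$ in its $2$-part plus a $\Z/7$ or $\Z/3$ block ``needs three generators''; it does not. The minimal number of generators of a finite abelian group is $\max_p\ell_p$, so $\Z/2\oplus\Z/2\oplus\Z/7\cong\Z/2\oplus\Z/14$ needs only two, and the bound $\ell(\disc N')\le\operatorname{rk}N'=2$ is not violated. Rank-two lattices of signature $(1,1)$ with such discriminants exist (e.g.\ $\langle2\rangle\oplus\langle-14\rangle$), and indeed the paper does \emph{not} kill the subcases with $3\mid a^2$ or $7\mid a^2$ (e.g.\ $a^2=4\cdot3$ or $4\cdot21$) by any existence or length count: the obstruction there is the finer requirement of Lemma~\ref{main.lemma}(3), that $\varphi\oplus t_a$ induce the identity on $\disc\widetilde M_a=\K^\perp/\K$, and the paper exhibits in each case an order-$\ge4$ element of $\K^\perp/\K$ that is necessarily reversed. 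Your argument never invokes this condition, so case~1 is not actually ruled out. In the second case you yourself concede the computation is not done (``essentially determines $m$'', ``either by \dots or \dots''), and the premise that $\disc\langle2m\rangle$ must absorb the whole odd part $\Z/21$ only covers one of the two possible assignments of the odd blocks to the eigenlattices (the unreversed odd blocks land in the discriminant of the $+1$-eigenlattice, which may be the rank-two one). As it stands the proposal establishes the correct framework but proves neither case; the missing ingredient throughout is the extension/identity condition on the discriminant of the glued lattice, which is where all the actual contradictions in the paper's proof come from.
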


\subsection{Contents of the paper}
In $\S2$, we recall a few facts of Nikulin's theory of discriminant forms which is the principal technical tool of the paper. In $\S3$, we discuss projective models of $K3$-surfaces, introduce the abstract homological types, and recall the arithmetical reduction of the classification problem (see Theorem~\ref{def.class}). In $\S4$, we restate the existence of real models in arithmetical terms (see Theorem~\ref{real.model}) and suggest two approaches  to find such models: via perturbations and via reflections. In particular, in Proposition~\ref{rankT=2}, we assert that the reflections suffice to detect all real representatives in the submaximal case $\operatorname{rk}\operatorname{NS}(X)=19$. Thus, in $\S5$, we develope a new algorithm listing all involutive skew-automorphism of an abstract homological type inducing a reflection on the transcendental lattice. In $\S6$, this algorithm is applied to two polarizations: spatial quartics (to prove Theorem~\ref{principal.result}) and to plane sextics (to prove Proposition~\ref{byproduct}).
\subsection{Acknowledgements}
I am grateful to Alex Degtyarev for a number of comments, suggestions and fruitful and motivating discussions.


\section{Integral lattices}
\subsection{Finite quadratic forms}
A finite quadractic form is a finite abelian group $\mathcal{L}$ equipped with a map $q\colon \mathcal{L}\rightarrow\mathbb{Q}/2\mathbb{Z}$ such that $q(x+y)=q(x)+q(y)-2b(x,y)$ for all $x,y\in\mathcal{L}$, where $b\colon \mathcal{L}\otimes\mathcal{L}\rightarrow\mathbb{Q}/\mathbb{Z}$ is a symmetric bilinear form (which is determined by $q$) and $2$ is the isomorphism $\times2\colon \mathbb{Q}/\mathbb{Z}\rightarrow\mathbb{Q}/2\mathbb{Z}$. We write $x^2$ and $x\cdot y$ for $q(x)$ and $b(x,y)$, respectively. A finite quadratic form is \emph{nondegenerate} if the homomorphism
\begin{align*}
    \mathcal{L}\rightarrow \operatorname{Hom}(\mathcal{L},\mathbb{Q}/\mathbb{Z}),\quad x\mapsto(y\mapsto x\cdot y)
\end{align*}
is an isomorphism. We denote by $\Aut (\mathcal{L})$ the group of automorphisms of $\mathcal{L}$ preserving the form $q$. A subgroup $\K\subset \mathcal{L}$ is called \emph{isotropic} if the restriction of the quadratic form $q$ on $\mathcal{L}$  to $\K$ is identically zero. If this is case $\K^{\bot}/\K$ also inherits from $L$ a nondegenerate quadratic form.

Each finite quadratic form can be decomposed into the orthogonal direct sum $\mathcal{L}=\bigoplus_p\mathcal{L}_{[p]}$ of its $p$-primary components $\mathcal{L}_{[p]}:=\mathcal{L}\otimes \mathbb{Z}_p$, where the summation runs over all primes $p$. We denote by $\ell(\mathcal{L})$ the minimal number of generators of $\mathcal{L}$ and we put $\ell_p(\mathcal{L})=\ell(\mathcal{L}_{[p]})$. A finite quadratic form $\mathcal{L}$ is called \emph{even} if there is no element $x\in \mathcal{L}_{[2]}$ of order $2$ with $x^2=\pm\frac{1}{2}\bmod 2\Z$.

Given coprime integers $(m,n)$  such that $mn=0\bmod 2$, we denote by $[ \frac{m}{n}]$  the nondegenerate finite quadratic form on $\Z/n\Z$ sending the generator to $\frac{m}{n}\bmod 2\Z $. For a positive integer $k$, we use the notation $\mathcal{U}(2^k)$ and $\mathcal{V}(2^k)$ for the quadratic forms on $\Z/2^k\Z \times \Z/2^k\Z$, defined by the matrices
\begin{align*}
    \mathcal{U}(2^k):=\frac{1}{2^k}\left[
             \begin{array}{cc}
               0 & 1 \\
               1 & 0 \\
             \end{array}
           \right],\quad \mathcal{ V}(2^k):=\frac{1}{2^k}\left[
             \begin{array}{cc}
              2&  1\\
               1 & 2\\
             \end{array}\right].
\end{align*}
(A finite quadratic form can be described by means of the Gram matrix $[\varepsilon_{ij}]$ such that $\varepsilon_{ij}=\beta_i\cdot\beta_j \bmod\Z$ and $\varepsilon_{ii}=\beta_i^2\bmod 2\Z$ where $\beta_k$'s are the basis vectors ). Nikulin \cite{Niku2} proved that, any finite nondegenerate quadratic form decomposes into an orthogonal direct sum of cyclic forms $[\frac{m}{n}]$ and length 2 forms $\mathcal{U}(2^k)$, $\mathcal{V}(2^k)$. We use the notation $\langle\alpha\rangle$ for the the cyclic subgroup generated by $\alpha$.

\begin{definition}\label{detp}
 Let $\mathcal{L}$ be a nondegenerate quadratic form. Given a prime $p$, the determinant of the Gram matrix (in some basis) of $\mathcal{L}_{[p]}$ has the form  $u/|{\mathcal{L}_{[p]}}|$ for some unit $u\in\Z_p^{\times}$, and this unit is independent of the basis modulo $(\Z_p^{\times})^2$ (if $p$ is odd or $\mathcal{L}_{[p]}$ is even) or modulo $(\Z_2^{\times})^2\times\{1,5\}$ (if $p=2$ and $\mathcal{L}_{[2]}$ is odd). We define $\det_p \mathcal{L}=u/|{\mathcal{L}_{[p]}}|$ where $u\in\Z_p^{\times}/(\Z_p^{\times})^2$ or ${u\in\Z_2^{\times}/(\Z_2^{\times})^2\times\{1,5\}}$ is as above (see, \cite{MM3}).
\end{definition}
\begin{remark}\label{NikuDef}
According to Nikulin~\cite{Niku2}, given a prime $p$ and a quadratic form $\mathcal{L}$ on a $p$ group, there is a unique $p$-adic lattice $L$ such that $\operatorname{rk} L= \ell_p(\mathcal{L})$ and $\disc L = \mathcal{L}_{[p]}$. One has $\det L= \det_p\mathcal{L}|{\mathcal{L}_{[p]}}|^2= u |{\mathcal{L}_{[p]}}|$ for  some unit $u$ as in the Definition \ref{detp} (Nikulin uses this equality as a definition of $\det_p \mathcal{L}$).
\end{remark}
\begin{proposition}\label{KpMp}
Let $p$ be a prime and assume that $\mathcal{M}$ is a quadratic form on a $p$-group and  $\K\subset\M$ an isotropic subgroup. If $\ell_p(\K^{\bot}/\K)=\ell_p(\M)$, then $\operatorname{det}_p(\K^{\bot}/\K)=\det_p(\M)\bmod (\mathbb{Q}_p^{\times})^{2}$.
\end{proposition}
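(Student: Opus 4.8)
The plan is to realize both $\M$ and $\N:=\K^{\bot}/\K$ as discriminant forms of $p$-adic lattices of minimal rank (Remark~\ref{NikuDef}), to link these two lattices by an overlattice inclusion dictated by $\K$, and to compare determinants. Throughout, $\M$ is tacitly nondegenerate (otherwise $\det_p\M$ is undefined); then, $\K$ being isotropic, one has $\K\subseteq\K^{\bot}$, the form $\N$ is nondegenerate, and $|\N|=|\M|/|\K|^{2}$ (with $|\K|$ a power of $p$). Note also that, since $\times2\colon\mathbb{Q}_p/\mathbb{Z}_p\to\mathbb{Q}_p/2\mathbb{Z}_p$ is injective, isotropy of $\K$ forces the associated bilinear form, and not merely the quadratic form, to vanish on $\K$.

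First I would take, via Remark~\ref{NikuDef}, the unique $p$-adic lattice $L$ with $\operatorname{rk}L=\ell_p(\M)=:r$ and $\disc L=\M$; then $\det L=\det_p(\M)\,|\M|^{2}$, so $\det L\equiv\det_p(\M)\bmod(\mathbb{Q}_p^{\times})^{2}$. Let $\tilde L\subset L\otimes\mathbb{Q}_p$ be the preimage of $\K$ under the projection $L^{\vee}\to L^{\vee}/L=\M$, where $L^{\vee}$ is the dual lattice. Using that both forms vanish on $\K$, one checks that $\tilde L$ is an even $\mathbb{Z}_p$-lattice with $L\subseteq\tilde L\subseteq\tilde L^{\vee}\subseteq L^{\vee}$ and $[\tilde L:L]=|\K|$; by the standard computation of the discriminant form of an overlattice, $\disc\tilde L=\K^{\bot}/\K=\N$. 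Comparing Gram matrices along the index-$|\K|$ inclusion $L\subseteq\tilde L$ yields $\det L=|\K|^{2}\det\tilde L$ in $\mathbb{Q}_p^{\times}/(\mathbb{Z}_p^{\times})^{2}$.

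Now the hypothesis enters: $\operatorname{rk}\tilde L=r=\ell_p(\M)=\ell_p(\N)=\ell_p(\disc\tilde L)$, so $\tilde L$ has the minimal possible rank among $p$-adic lattices with discriminant form $\N$ and is therefore, by the uniqueness in Remark~\ref{NikuDef}, the minimal lattice of $\N$; hence $\det\tilde L=\det_p(\N)\,|\N|^{2}$, i.e. $\det\tilde L\equiv\det_p(\N)\bmod(\mathbb{Q}_p^{\times})^{2}$. Combining the three relations and using that $|\K|^{2}$ is a square, I get $\det_p(\M)\equiv\det L=|\K|^{2}\det\tilde L\equiv\det\tilde L\equiv\det_p(\N)\bmod(\mathbb{Q}_p^{\times})^{2}$, which is the claim. (When $p=2$ and the forms are odd, $\det_2$ carries the usual extra $\{1,5\}$-ambiguity; since the determinant of a $\mathbb{Z}_2$-lattice is nonetheless well defined modulo $(\mathbb{Z}_2^{\times})^{2}$, the chain of relations above still selects compatible representatives and the conclusion stands.) The only non-formal point is the verification in the second paragraph that $\tilde L$ is even and integral and that $\disc\tilde L=\N$: this is where isotropy of $\K$ (for evenness) and the orthogonal-complement bookkeeping (for the discriminant form) are genuinely used, and it is the step I expect to require the most care; the remainder is valuation arithmetic together with the uniqueness clause of Remark~\ref{NikuDef}.
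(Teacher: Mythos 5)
Your proof is correct and follows essentially the same route as the paper's: realize $\M$ as the discriminant of the minimal $p$-adic lattice, pass to the finite index extension determined by the kernel $\K$, observe that the hypothesis $\ell_p(\K^{\bot}/\K)=\ell_p(\M)$ makes this extension the minimal lattice for $\K^{\bot}/\K$, and compare determinants via $\det L=|\K|^{2}\det\tilde L$. The only difference is that you spell out the overlattice construction and the $p=2$ caveat in more detail than the paper does.
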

\begin{remark}\label{detcongruence}
The equality in Proposition \ref{KpMp} holds in the groups where both determinants are well-defined, \emph{i.e.}, typically in $\mathbb{Q}_p^{\times}/(\mathbb{Q}_p^{\times})^2$; however if $p=2$ and at least one of the forms is odd then the equality holds in $\mathbb{Q}_2^{\times}/(\mathbb{Q}_2^{\times})^2\times\{1,5\}$ (\emph{cf}., Definition \ref{detp}). More precisely, $\operatorname{det}_p(\K^{\bot}/\K)=|{\K}|^2\det_p(\M)\bmod (\mathbb{Z}_p^{\times})^{2}$; in fact, we are comparing the ``essential parts", \emph{i.e.}, units $u$ as in Definition \ref{detp}.
\end{remark}
\begin{proof}
Let $M$ be the $p$-adic lattice as in Remark~\ref{NikuDef} and consider the finite index extension $M'\supset M$ given by the kernel $\K$ (see Proposition~\ref{L-K}). Since $\operatorname{rk}M=\operatorname{rk}M'$ and $\ell_p(\M')=\ell_p(\M)$, the extension  $M'$ can be used to compute $\det \K^{\bot}/\K$. Since $\operatorname{det}(M)=\operatorname{det}(M')|{\K}|^2$, we have the statement.
\end{proof}
The following proposition holds for $p=2$ only.
\begin{proposition}\label{K2M2}
Let $\M$ be a finite quadratic form on a $2$-group and $\K\subset\M$ a cyclic isotropic subgroup. Assume that $\ell_2(\K^{\bot}/\K)<\ell_2(\M)$, then $\ell_2(\K^{\bot}/\K)=\ell_2(\M)-2$ and $\operatorname{det}_2(\K^{\bot}/\K)=-\det_2{\M} \bmod(\mathbb{Q}_2^{\times})^2$ (cf. Remark \ref{detcongruence}).
\end{proposition}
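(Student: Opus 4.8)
The plan is to imitate the proof of Proposition~\ref{KpMp}: pass to $2$-adic lattices and analyze the unimodular part created by the extension. Let $M$ be the $2$-adic lattice with $\operatorname{rk}M=\ell_2(\M)$ and $\disc M=\M$ (Remark~\ref{NikuDef}), and let $M'\supset M$ be the finite-index extension determined by the cyclic isotropic subgroup $\K$ (see Proposition~\ref{L-K}). Then $M'/M\cong\K$ is cyclic of order $2^s$ for some $s\ge1$, $\disc M'=\K^{\bot}/\K$, and $\det M=2^{2s}\det M'$; if $v\in M'$ generates $M'/M$, then isotropy of $\K$ means precisely that $v^2\in2\Z_2$. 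Since $\operatorname{rk}M'=\operatorname{rk}M=\ell_2(\M)>\ell_2(\K^{\bot}/\K)=\ell_2(\disc M')$, the lattice $M'$ has a nontrivial unimodular orthogonal summand; write $M'=U\oplus W$ with $U$ unimodular of rank $r\ge1$ and $\operatorname{rk}W=\ell_2(\disc W)=\ell_2(\K^{\bot}/\K)$. Then $r=\ell_2(\M)-\ell_2(\K^{\bot}/\K)$ is exactly the length drop, $W$ is the canonical lattice of $\K^{\bot}/\K$, and, since $\det M=2^{2s}\det U\det W$, the assertion reduces to proving that $r=2$ and that $\det U\equiv-1$ modulo the relevant group of units.

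The core is the reduction modulo $2$. On $V:=M'\otimes\mathbb{F}_2$ one has the symmetric bilinear form $\bar b$ induced by $b$, whose radical is $W\otimes\mathbb{F}_2$ (because $U$ is unimodular while $W$, having no unimodular summand, pairs into $2\Z_2$ with itself), together with the map $\bar q\colon V\to\mathbb{F}_2$, $x\mapsto x^2\bmod2$, which is $\mathbb{F}_2$-linear since $2(x\cdot y)\in2\Z_2$. Both $\bar b$ and $\bar q$ vanish on the image $H$ of $M$ in $V$, because $M$ pairs into $2\Z_2$ with itself; and $H$ is a hyperplane in $V$, since $M'/M$ is a nontrivial cyclic $2$-group. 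Projecting $H$ onto $U\otimes\mathbb{F}_2$ along the radical produces a totally $\bar b$-isotropic subspace of dimension at least $r-1$ inside the nondegenerate space $U\otimes\mathbb{F}_2$ of dimension $r$; since such a subspace has dimension at most $\lfloor r/2\rfloor$, we get $r\le2$. To exclude $r=1$ one uses isotropy: then $U=\langle\varepsilon\rangle$ with $\varepsilon\in\Z_2^{\times}$, the dimension count forces $H=W\otimes\mathbb{F}_2$, hence the $U$-component of $v$ is a unit multiple of a generator of $U$, so $v^2\equiv\varepsilon\not\equiv0\pmod2$, a contradiction. Thus $r=2$, i.e.\ $\ell_2(\K^{\bot}/\K)=\ell_2(\M)-2$.

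For the determinant, a parallel argument yields a basis $u_0,u_1$ of $U$ for which $u_0^2,u_1^2\in2\Z_2$: take $u_0$ to lift the $1$-dimensional image of $H$ in $U\otimes\mathbb{F}_2$, so that $u_0^2\in2\Z_2$ because $\bar q|_H=0$, and choose $u_1$ so that the $u_1$-coordinate of the $U$-component of $v$ is a unit; then isotropy of $v$, combined with $u_0^2\in2\Z_2$, forces $u_1^2\in2\Z_2$. Hence $U$ is an even rank-$2$ unimodular $2$-adic lattice, so $\det U\equiv-1$ or $\equiv3$ modulo $(\Z_2^{\times})^2$. Taking unit parts in $\det M=2^{2s}\det U\det W$ gives $\det_2\M=\det U\cdot\det_2(\K^{\bot}/\K)$, whence $\det_2(\K^{\bot}/\K)=-\det_2\M$ when $\det U\equiv-1$; when $\det U\equiv3$ one obtains the equality only modulo $(\Z_2^{\times})^2\times\{1,5\}$, which is exactly the group in which it is claimed whenever one of $\M$, $\K^{\bot}/\K$ is odd (cf.\ Remark~\ref{detcongruence}), and one verifies that the case $\det U\equiv3$ arises only then.

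I expect two delicate points. The first is the exclusion of $r=1$, where the isotropy of $\K$ must be carried through the lattice extension and applied at exactly the right moment; for $r\ge3$ only the crude bound $\ell_2(\K^{\bot}/\K)\ge\ell_2(\M)-2$ is needed. The second is the alternative $\det U\equiv-1$ versus $\det U\equiv3$, innocuous modulo $\{1,5\}$ but requiring the parities of $\M$ and $\K^{\bot}/\K$ to be tracked for the sharper congruence; this is the step I would check most carefully.
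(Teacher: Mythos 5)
Your lattice-theoretic route is genuinely different from the paper's: the paper never leaves the category of finite quadratic forms, instead iterating a partial normal form (Lemma~\ref{sequences}) and repeatedly quotienting by $2^{m_1}\K$ until $\M\cong\N_0\oplus\N_1$ with $\N_1$ a single $2\times2$ block containing $\kappa$ as a basis vector, whereas you lift everything to the minimal-rank $2$-adic lattice $M$ and read off the length drop from the unimodular Jordan constituent $U$ of the overlattice $M'$. Your treatment of the length statement is correct and, to my eye, cleaner than the paper's: the bound $\dim\pi(H)\le\lfloor r/2\rfloor$ forcing $r\le2$, and the exclusion of $r=1$ via $v^2\equiv\varepsilon\not\equiv0\pmod2$ against isotropy of $\K$, are both sound (and your basis argument does correctly rule out an odd $U$ in rank $2$ as well, since $a_1\in\Z_2^{\times}$ forces $u_1^2\in2\Z_2$).

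The gap is in the last step of the determinant computation. Having shown $U$ is even unimodular of rank $2$, you get $U\cong\mathbf{U}$ ($\det\equiv-1$) or $U\cong\mathbf{V}$ ($\det\equiv3$), and these are \emph{not} congruent modulo $(\Z_2^{\times})^2$; they only merge modulo $\{1,5\}$. So when both $\M$ and $\K^{\bot}/\K$ are even --- exactly the case in which the proposition, read with Remark~\ref{detcongruence}, asserts the sharp congruence modulo $(\mathbb{Q}_2^{\times})^2$ --- you must exclude $U\cong\mathbf{V}$, and the sentence ``one verifies that the case $\det U\equiv3$ arises only then'' is precisely the missing verification. It is not routine: it is the $\mathbf{U}$-versus-$\mathbf{V}$ ambiguity of $2$-adic Jordan splittings, and your data ($u_0^2,u_1^2\in2\Z_2$) cannot distinguish the two, since one needs information modulo $4$ (every primitive $x\in\mathbf{V}$ has $x^2\equiv2\bmod4$, while $\mathbf{U}$ has primitive vectors of square $0$). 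The paper sidesteps this entirely: because the generator $\kappa$ of the isotropic kernel survives as a \emph{basis vector} of the final block $\N_1$, its Gram entry is $0$ exactly in $\mathbb{Q}/2\mathbb{Z}$ (not merely even), so the block is $\smash{\frac{1}{2^m}}\left[\begin{smallmatrix}0&1\\1&\tau\end{smallmatrix}\right]$ and its determinant has unit part $-1$ on the nose whenever the block is even. To close your argument you would need the analogous refinement, e.g.\ a primitive vector of $U$ whose square lies in $4\Z_2$, extracted from the isotropy of $\K$ together with the evenness of $\M$; as written, your proof establishes the proposition only modulo $(\mathbb{Q}_2^{\times})^2\times\{1,5\}$.
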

We preceed the proof of Proposition \ref{K2M2} with the following two Lemmas.
\begin{lemma}\label{sequences}
Let $\M$ be a finite quadratic form on a $2$-group and $\mathcal{C}\subset\M$ a cyclic subgroup. Then there exist sequences of integers
\begin{equation*}
    0<m_1<m_2<\cdots<m_N=\log_2|{\mathcal{C}}|
\end{equation*}
and
\begin{equation*}
     0\leq r_1<r_2<\cdots<r_N
\end{equation*}
such that $\M\cong\mathcal{ N}_0\oplus \bigoplus \mathcal{N}_s$, where  $\mathcal{N}_s$ is a nondegenerate finite quadratic form generated by either one element $u_s$ or two elements $u_s, v_s$, all of order $2^{m_s+r_s}$, and whose Gram matrix is
\begin{align*}
  \frac{1}{2^{m_s+r_s}}\left[
       \begin{array}{c}
         \mu_s\\
       \end{array}
     \right]
  \mbox{ or }\frac{1}{2^{m_s+r_s}}\left[ \begin{array}{cc}
                          \mu_s & 1 \\
                           1 &  \nu_s \\
                           \end{array}
                    \right]
\end{align*}
respectively, where $\mu_s$ is odd in the former case  and even in the latter case.
Furthermore, the cyclic subgroup $\mathcal{C}$ is generated by $\kappa=\bigoplus\kappa_s$, where $\kappa_s=2^{r_s}u_s$.
\end{lemma}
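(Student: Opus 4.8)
The plan is to build the desired orthogonal decomposition by induction, at each stage peeling off a single summand $\mathcal{N}_s$ that "captures" the next portion of the cyclic subgroup $\mathcal{C}$. First I would fix a generator $c$ of $\mathcal{C}$ and use Nikulin's structure theorem (cited after the definition of $\mathcal{U}(2^k),\mathcal{V}(2^k)$) to write $\M$ as an orthogonal sum of cyclic forms $[\tfrac{m}{n}]$ and length-$2$ forms $\mathcal{U}(2^k),\mathcal{V}(2^k)$. The key point is that the image of $c$ in such a decomposition need not be supported on a single summand, so the naive decomposition does not immediately have the stated form. Instead, I would argue as in the standard proof of the existence of a "good basis" relative to a cyclic subgroup: among all orthogonal decompositions $\M=\bigoplus_t \mathcal{P}_t$ into cyclic and length-$2$ pieces, choose one for which the largest order of a component onto which $c$ has nonzero projection is as small as possible, then refine. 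The element $\kappa_1=2^{r_1}u_1$ arises by setting $2^{r_1}$ to be the smallest power with $2^{r_1}c$ landing (after the refinement) in a single summand $\mathcal{N}_1$ of minimal possible order $2^{m_1+r_1}$; here $m_1=\ord(2^{r_1}c)$ within that summand.

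The main technical step — and I expect this to be the principal obstacle — is the refinement/exchange argument showing one can always modify the decomposition so that $c$ becomes "diagonal" with respect to it in the required staircase pattern, with strictly increasing exponents $m_s+r_s$ a priori unclear but forced a posteriori. Concretely, suppose $c$ has nonzero projections to two summands $\mathcal{P}_a$ and $\mathcal{P}_b$ of orders $2^{a}\le 2^{b}$. If $a<b$, one can subtract a suitable multiple of the generator of $\mathcal{P}_b$ from that of $\mathcal{P}_a$ (and symmetrically adjust the complementary generator so the form stays block-diagonal) to kill the smaller-order contribution; the arithmetic here is the usual $2$-adic bookkeeping with the subtlety that odd-valued diagonal entries behave differently from even ones, which is exactly why the statement distinguishes the $[\mu_s]$ case ($\mu_s$ odd) from the $[\begin{smallmatrix}\mu_s&1\\1&\nu_s\end{smallmatrix}]$ case ($\mu_s$ even). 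When $a=b$ one instead uses that two equal-order components can be recombined via an automorphism of their sum to move the whole projection of $c$ onto one of them. Iterating, $c$ is supported on components of pairwise distinct orders; grouping the rest into $\mathcal{N}_0$ (the part orthogonal to $c$, i.e.\ onto which $c$ projects trivially) gives the decomposition, and reading off the orders and the $2$-adic valuations of the projections of $c$ yields the two sequences $m_s$ and $r_s$. The strict monotonicity $m_1<\cdots<m_N$ and $r_1<\cdots<r_N$ then follows because distinct summands have distinct orders $2^{m_s+r_s}$ while $c$ restricted to $\mathcal{N}_s$ has order exactly $2^{m_s}$, so the pairs $(m_s,r_s)$ are totally ordered; after reindexing both sequences are increasing, and $m_N=\log_2|\mathcal{C}|$ because $\kappa=\bigoplus 2^{r_s}u_s$ must generate all of $\mathcal{C}$.

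Finally I would verify that $\kappa=\bigoplus\kappa_s$ with $\kappa_s=2^{r_s}u_s$ indeed generates $\mathcal{C}$: by construction $\kappa$ is the image of $c$ under the chosen isomorphism, up to an automorphism of $\M$ respecting the decomposition, and such an automorphism sends a generator of $\mathcal{C}$ to a generator of its image. One small point to be careful about: in the length-$2$ summands $\mathcal{N}_s$ one must check that the projection of $c$ can be arranged to be a multiple of $u_s$ alone (not involving $v_s$); this is achieved by choosing, within the exchange argument, the basis of the $2$-dimensional block so that $u_s$ spans the cyclic subgroup generated by the projection of $c$ — possible precisely because $\mathcal{U}(2^k)$ and $\mathcal{V}(2^k)$ have enough automorphisms. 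With these arrangements the stated normal form follows.
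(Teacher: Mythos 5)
Your proposal follows the same broad idea as the paper---peel off, one at a time, orthogonal summands of $\M$ adapted to the generator of $\mathcal{C}$, with an odd/even dichotomy deciding between a cyclic block and a rank-two block---but the mechanism you propose for producing those summands contains a step that is false as stated. You claim that whenever $c$ has nonzero projections onto two summands $\mathcal{P}_a$, $\mathcal{P}_b$ of orders $2^a<2^b$ one can modify the decomposition so as to ``kill the smaller-order contribution'', and that equal-order summands can always be merged; iterated, this would terminate with $c$ supported on a single essential block, \emph{i.e.}\ $N=1$ always. That is impossible in general: take $\M=[\frac{1}{2}]\oplus[\frac{1}{8}]$ with generators $g_1,g_2$ and $c=g_1+2g_2$. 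Then $\operatorname{ord}(c)=4$ and $c^2=1\bmod 2\Z$ has even numerator as a fraction over $4$, so $\langle c\rangle$ is a degenerate subform (its kernel contains $2c$) and cannot split off as a cyclic summand, while a rank-two block of exponent $4$ would have order $16>|\M|=8$; hence the order-$2$ contribution $g_1$ genuinely cannot be absorbed into the order-$8$ summand, and the correct normal form keeps $N=2$ with $(m_1,r_1)=(1,0)$, $(m_2,r_2)=(2,1)$. (The paper's own list of outcomes, \emph{e.g.}\ \eqref{N1N2.1}--\eqref{N1N2.6}, consists precisely of such irreducible two-block configurations.) A second, independent gap is the monotonicity claim: from the summands having pairwise distinct orders $2^{m_s+r_s}$ you conclude that the pairs $(m_s,r_s)$ are ``totally ordered'', but distinct sums $m_s+r_s$ do not force the $m_s$ and the $r_s$ to be separately and compatibly increasing (orders $2^3$ and $2^5$ with projections both of order $2^2$ would give the pairs $(2,1)$ and $(2,3)$); the simultaneous strict monotonicity is essential later, \emph{e.g.}\ in Lemma~\ref{r1=0}, so it cannot be left to ``reindexing''.

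For comparison, the paper's proof avoids exchange moves on an ambient decomposition altogether. It first puts $\M$ into partial normal form $\bigoplus_i\M_i$ with $\M_i$ homogeneous of exponent $2^i$, decomposes $\kappa$ into its homogeneous components, and at step $s$ lets $2^{r_s}$ be the exact divisibility of the remaining part $\bar{\kappa}_s$ and collects into $\kappa_s$ \emph{all} components of order at most $2^{m_s}$. Writing $\kappa_s=2^{r_s}u_s$, the block $\N_s$ is $\langle u_s\rangle$ when $u_s^2$ has odd numerator (then it is nondegenerate and splits off automatically) and $\langle u_s,v_s\rangle$ with a pairing partner $v_s\in\M_n$ satisfying $u_s\cdot v_s=1/2^n$ otherwise. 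Because each step removes everything of order up to $2^{m_s}$ and the remainder is divisible by a strictly larger power of $2$, both inequalities $m_{s+1}>m_s$ and $r_{s+1}>r_s$ come for free. To salvage your route you would need to replace the unconditional ``kill the smaller contribution'' step by a merging criterion phrased in terms of the parity of the numerator of $u_s^2$ and the existence of pairing partners, and then prove the double monotonicity directly.
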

\begin{proof}
Using the \emph{partial normal form} (see Lemma 4.2 in \cite{MM3}), we can decompose the quadratic form $\M$ into orthogonal sum $\M=\bigoplus \M_i$, where $\M_i$ is a homogenous group of exponent $2^i$. We construct the sequences $\{m_s\}$, $\{r_s\}$ and $\{\N_s\ni\kappa_s\}$ inductively, starting with $\bar{\kappa}_1:=\kappa$, where $\kappa$ is a generator of the cyclic group $\mathcal{C}$. At step $s$, let $\bar{\kappa}_s=\bigoplus_i\kappa_i'$, $\kappa_i'\in\M_i$, be the corresponding decomposition.  Take
\begin{equation}\label{r1}
    r_s=\operatorname{max}\{r: \bar{\kappa}_s=2^r\alpha,\mbox{ $\alpha\in\M$}\}
\end{equation}
and let
\begin{equation}\label{n.max}
    n=\operatorname{max}\{i:\operatorname{ord}(\kappa'_i)=2^{i-r_s} \}\mbox{ and } m_s=n-r_s=\operatorname{log}_2\operatorname{ord}(\kappa_n').
\end{equation}
Choose $\kappa_s$ as
\begin{equation}\label{kappa1}
    \kappa_s=\bigoplus_{i=\operatorname{ord}(\kappa'_i)\leq {2^{m_s}}}\kappa'_i
\end{equation}
We have $\kappa_s=2^{r_s}u_s$ for some $u_s\in \M$ with $\operatorname{ord}(u_s)=2^n$ and $u_s^2=\lambda/2^{n}$. If $\lambda$ is odd then we take for   $\N_s$ the cyclic group generated by $u_1$ which is an orthogonal summand. If $\lambda$ is even, since $\M_n$ is non-degenerate, there exists $v_s \in\M_n$ such that $u_s\cdot v_s=\frac{1}{2^{n}}$ and we take for $\N_s$ the group generated by $u_s,v_s$.

Now, consider $\bar{\kappa}_{s+1}=\bar{\kappa}_s-\kappa_s$. If $\bar{\kappa}_{s+1}\neq0$, pass to the next step. Eventually, we  obtain sequences $\{m_s\}$, $\{r_s\}$ and $\{\N_s\ni\kappa_s\}$ as in the statement and there remains to let $\N_0=(\bigoplus\N_s)^{\bot}$.


\end{proof}
\begin{lemma}\label{r1=0}
In the notation of Lemma~\ref{sequences}, if $\mathcal{C}$ is isotropic and $\ell_2(\mathcal{C}^{\bot}/\mathcal{C})<\ell_2(M)$, then $r_1=0$.
\end{lemma}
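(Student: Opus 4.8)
The plan is to argue by contradiction, so assume $r_1\ge 1$. By the definition of $r_1$ in \eqref{r1} (applied to $\bar\kappa_1=\kappa$) this means exactly that $\kappa\in 2\M$, say $\kappa=2\alpha$ with $\alpha\in\M$. If $\kappa=0$ then $\mathcal{C}^\bot/\mathcal{C}=\M$ and there is nothing to prove, so we may assume $\kappa\ne 0$ and set $2^k=\ord(\kappa)\ge 2$. The argument reduces to two assertions: (a) $\ell_2(\mathcal{C}^\bot)=\ell_2(\M)$, and (b) $\kappa\in 2\mathcal{C}^\bot$ (this is where isotropy enters). Granting these, applying $-\otimes\mathbb{F}_2$ to $0\to\mathcal{C}\to\mathcal{C}^\bot\to\mathcal{C}^\bot/\mathcal{C}\to 0$ yields a right-exact sequence whose first map $\mathcal{C}/2\mathcal{C}\to\mathcal{C}^\bot/2\mathcal{C}^\bot$ is zero, because by (b) the generator $\kappa$ of $\mathcal{C}$ lies in $2\mathcal{C}^\bot$; hence $\ell_2(\mathcal{C}^\bot/\mathcal{C})=\ell_2(\mathcal{C}^\bot)$, which by (a) equals $\ell_2(\M)$, contradicting the hypothesis $\ell_2(\mathcal{C}^\bot/\mathcal{C})<\ell_2(\M)$.

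For (a) I would use the identity $(\M[2])^\bot=2\M$, valid for any nondegenerate finite quadratic form on a $2$-group (the inclusion $\supseteq$ is immediate and equality follows by comparing orders via nondegeneracy). Since $\kappa\in 2\M$, this gives $\kappa\cdot t=0$ for every $t\in\M[2]$. By nondegeneracy the homomorphism $x\mapsto\kappa\cdot x$ has image the order-$2^k$ cyclic subgroup of $\Q/\Z$, so some $x_0\in\M$ satisfies $\kappa\cdot x_0=\tfrac12$; then $\kappa\cdot(2x_0)=0$, so $2x_0\in\mathcal{C}^\bot$, but $2x_0\notin 2\mathcal{C}^\bot$: if $2x_0=2y$ with $y\in\mathcal{C}^\bot$, then $x_0-y\in\M[2]$, forcing $\kappa\cdot x_0=\kappa\cdot y+\kappa\cdot(x_0-y)=0$, a contradiction. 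Since $\M/\mathcal{C}^\bot\cong\Z/2^k$ is cyclic, applying $-\otimes\mathbb{F}_2$ to $0\to\mathcal{C}^\bot\to\M\to\M/\mathcal{C}^\bot\to 0$ shows $\ell_2(\mathcal{C}^\bot)$ equals $\ell_2(\M)-1$ or $\ell_2(\M)$, the larger value occurring precisely when $\mathcal{C}^\bot\cap 2\M\not\subseteq 2\mathcal{C}^\bot$; the element $2x_0$ puts us in this case.

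For (b), isotropy of $\mathcal{C}$ is equivalent to $\kappa^2=0$, whence $0=\kappa^2=(2\alpha)^2=4\alpha^2$ in $\Q/2\Z$ and therefore $2\alpha^2=0$ in $\Q/\Z$. Using the general identity $x\cdot x\equiv -x^2\pmod\Z$ (a formal consequence of the form axioms together with $(2x)^2=4x^2$), we get $\kappa\cdot\alpha=(2\alpha)\cdot\alpha=2(\alpha\cdot\alpha)=-2\alpha^2=0$, i.e. $\alpha\in\kappa^\bot=\mathcal{C}^\bot$, so $\kappa=2\alpha\in 2\mathcal{C}^\bot$. Combined with (a) and the first paragraph, this yields the contradiction and proves $r_1=0$.

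The delicate points are the bookkeeping around the isomorphism $\times 2\colon\Q/\Z\to\Q/2\Z$ and its inverse (hidden in "$x\cdot x\equiv -x^2$" and in "$4\alpha^2=0\Rightarrow 2\alpha^2=0$"), and keeping straight the quantifiers in the two right-exactness computations of $\ell_2$; everything else is formal. As a check, assertion (b) can be verified directly in the decomposition of Lemma~\ref{sequences}: when $r_1\ge 1$ all $r_s\ge 1$, one takes $\alpha=\bigoplus_s 2^{r_s-1}u_s$, and $\kappa\cdot\alpha=-\tfrac12\sum_s\mu_s 2^{r_s-m_s}$, which is an integer precisely because $\sum_s\mu_s 2^{r_s-m_s}=\kappa^2\equiv 0\pmod{2\Z}$.
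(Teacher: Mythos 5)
Your proof is correct, and it takes a genuinely different route from the paper's. The paper argues via the explicit decomposition $\M\cong\mathcal{N}_0\oplus\bigoplus\mathcal{N}_s$ of Lemma~\ref{sequences}: if $r_1>0$ then all $r_s>0$, so every order-$2$ element of $\M$ lies in $\mathcal{C}^{\bot}$, whence $\ell_2(\mathcal{C}^{\bot})=\ell_2(\M)$ and $\ell_2(\mathcal{C}^{\bot}/\mathcal{C})\geq\ell_2(\M)-1$; the remaining value $\ell_2(\M)-1$ is then excluded by appealing to the congruence $\ell_2(\mathcal{C}^{\bot}/\mathcal{C})=\ell_2(\M)\bmod 2$, which the paper quotes without proof. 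You avoid that congruence entirely: your step (b) --- that isotropy together with $\kappa=2\alpha$ forces $\kappa\cdot\alpha=0$, hence $\kappa\in 2\mathcal{C}^{\bot}$ --- shows that the map $\mathcal{C}\otimes\mathbb{F}_2\to\mathcal{C}^{\bot}\otimes\mathbb{F}_2$ vanishes, so that $\ell_2(\mathcal{C}^{\bot}/\mathcal{C})=\ell_2(\mathcal{C}^{\bot})$ exactly; combined with (a) this pins down $\ell_2(\mathcal{C}^{\bot}/\mathcal{C})=\ell_2(\M)$, which is strictly sharper than what the paper's chain of inequalities yields before invoking parity. Your argument is also decomposition-free (it uses Lemma~\ref{sequences} only for the meaning of $r_1$), resting on nondegeneracy and the form axioms alone, so it is more self-contained; the paper's version is shorter once one grants the parity congruence and the already-built decomposition. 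One small simplification: since you establish $(\M[2])^{\bot}=2\M\ni\kappa$ anyway, you get $\M[2]\subseteq\mathcal{C}^{\bot}$, hence $\mathcal{C}^{\bot}[2]=\M[2]$ and $\ell_2(\mathcal{C}^{\bot})=\dim_{\mathbb{F}_2}\mathcal{C}^{\bot}[2]=\ell_2(\M)$ immediately --- the detour through $x_0$ and the right-exact sequence for $0\to\mathcal{C}^{\bot}\to\M\to\M/\mathcal{C}^{\bot}\to0$ is only needed because you chose to measure $\ell_2$ by $\otimes\mathbb{F}_2$ rather than by $2$-torsion; both are correct.
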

\begin{proof}
Clearly, if $r_s>0$ for some $s$, then  all elements of order $2$ in $\N_s$ are in $\mathcal{C}^{\bot}$. Hence, if $r_1>0$, we have $\ell_2(\mathcal{C}^{\bot})=\ell_2(\M)$ and $\ell_2(\mathcal{C}^{\bot}/\mathcal{C})\geq\ell_2(\M)-1$. Then the assumption  $\ell_2(\mathcal{C}^{\bot}/\mathcal{C})<\ell_2(\M)$ implies that $\ell_2(\mathcal{C}^{\bot}/\mathcal{C})=\ell_2(\M)-1$, which contradicts to the congruence $\ell_2(\mathcal{C}^{\bot}/\mathcal{C})=\ell_2(\M)\bmod 2$.
\end{proof}
\begin{proof}[Proof of Proposition \ref{K2M2}]
We apply Lemma~\ref{sequences} and Lemma~\ref{r1=0} to $\mathcal{C}=\K$. Consider the sequences $r_s, m_s$ and the decomposition $\M\cong\mathcal{ N}_0\oplus \bigoplus \mathcal{N}_s$ given by Lemma \ref{sequences}, we have $r_1=0$ by Lemma \ref{r1=0}. Our goal is to reduce this decomposition to its shortest form. Assuming $N>1$, define $\tilde{\K}:=m_1\K$ and consider $\M'=\tilde{\K}^{\bot}/\tilde{\K}$ and $\K'=\K/\tilde{\K}$. By Lemma \ref{r1=0}, we have $\ell_2(\M')=\ell_2(\M)$. Hence by Proposition \ref{KpMp}, we get $\operatorname{det}_2(\M')=\det_2(\M)$. (Strictly speaking, one should adjust the proof of Lemma \ref{sequences} to show that $\M'$ is even whenever $\M$ is and, hence we do not loose any information, \emph{cf.} Remark \ref{detcongruence}). On the other hand, $\K'^{\bot}/\K'=\K^{\bot}/\K$ and we can replace the pair $(\M,\K)$ by the pair $(\M', \K')$ and apply Lemma \ref{sequences} again. Note that $|{\K'}|=2^{m_1}<|{\K}|$, hence the process bound to converge and we end up with a single essential term decomposition, \emph{ i.e.}, $\M\cong\N_0\oplus\N_1$. Since still $r_1=0$ (by Lemma \ref{r1=0} again), the essential term $\N_1$ is the group generated by $u=\kappa$ and $v$, with the quadratic form given by the Gram matrix
\begin{equation*}
   \frac{1}{2^m} \left[
      \begin{array}{cc}
        0 & 1 \\
       1 & \tau \\
      \end{array}
    \right].
\end{equation*}
Then, clearly, $\K^{\bot}/\K=\N_0$, and it is obvious that $\ell_2(\K^{\bot}/\K)=\ell_2(\M)-2$ and $\operatorname{det}_2(\K^{\bot}/\K)=-\det_2{\M}$.
\end{proof}

\subsection{Integral lattices and discriminant forms}
An \emph{(integral) lattice} is a finitely generated free abelian group $L$ equipped with a symmetric bilinear form $b\colon L\otimes L\rightarrow \mathbb{Z}$. Whenever the form is fixed, we use the abbreviation $x^2=b(x,x)$ and $x\cdot y:=b(x,y)$. A lattice $L$ is called \emph{even} if $x^2:=0\mod 2$ for all $x\in L$; it is called \emph{odd} otherwise. The \emph{determinant} $\det L \in \Z$ is the determinant of the Gram matrix of $b$ in any basis of $L$. Since the transition matrix between any two integral bases has determinant $\pm 1$, the determinant $\det L \in \Z$ is well-defined. A lattice $L$ is called \emph{unimodular} if $\det L=\pm 1$; it is called \emph{nondegenerate} if $\det L \neq 0$, or equivalently, the \emph{kernel}
\begin{align*}
    \operatorname{ker}L=L^{\bot} :=\{x\in L \mid\text{ $x\cdot y= 0$ for all $y\in L$}\}
\end{align*}
is trivial.

Given a lattice $L$, the bilinear form on $L$ can be extended by linearity to a $\mathbb{Q}$-valued bilinear form on $L\otimes\mathbb{Q}$. The inertia indices $\sigma_{\pm}$ of $L$ are the classical inertia indices of $L\otimes\mathbb{Q}$ and  the signature $\sigma L$ is the pair $\sigma L=(\sigma_+L,\sigma_-L)$

 If $L$ is nondegenerate, then the dual group $L^{\vee}:=\operatorname {Hom}(L,\Z)$ can be identified with the subgroup
\begin{align*}
    \{x \in L\otimes\Q \mid \text{$x\cdot y \in \Z$ for all $y \in L$}\}
\end{align*}
There is an obvious canonical inclusion $L=L\otimes \Z\subset L^{\vee}$ and the finite quotient group $\disc L :=L^{\vee}/L$ is called the \emph{discriminant} group. The order of $\disc L$ is equal to $\mathopen|{\det L}\mathclose|$. In particular, $L$ is unimodular if and only if $\disc L=0$.

The discriminant group inherits from $L\otimes\mathbb{Q}$ a nondegenerate symmetric bilinear form
\begin{align*}
    b\colon \disc L\otimes \disc L\rightarrow \mathbb{Q}/\mathbb{Z},\quad (x\bmod \Z)\otimes(y\bmod \Z)\mapsto(x\cdot y)\bmod \Z
\end{align*}
called the \emph{discriminant bilinear form}, and, if $L$ is even, its quadratic extension
\begin{align*}
    q\colon \disc L\rightarrow \mathbb{Q}/2\Z, \quad (x \bmod L) \mapsto x^2 \bmod2\Z,
\end{align*}
called the \emph{discriminant quadratic form}. Note that the discriminant group of an even lattice is a finite quadratic form. We use the notation $\disc_p L$ for the $p$-primary part of $\disc L$.

According to Nikulin \cite{Niku2}, two nondegenerate even lattices $L',L''$ are in the same \emph{genus} if and only if $\sigma L'=\sigma L''$ and $\disc L'\cong\disc L''$ (Here, we skip the original definition of a genus, instead,  we use this criterion). We denote by $g(L)$ the set of all isomorphism classes of nondegenerate even lattices in the genus of $L$. This set is finite (see \cite{Milnor.Sym.bilinear}, the result is due to Milnor but the proof is given in \cite{Serre.Cours}).

The group of autoisometries of a nondegenerate lattice $L$ is denoted by $O(L)$. The action of $O(L)$ extends to $L\otimes\mathbb{Q}$ by linearity, restricts to the dual $L^{\vee}$ and factors to $\disc L$. Therefore, there is a natural homomorphism $O(L)\rightarrow \Aut(\disc L)$. If this does not lead to a confusion, we use the same notation for an autoisometry of $L$ and the induced autoisometry of $\disc L$.

The orthogonal projection of any maximal positive definite subspace in $L\otimes\mathbb{R}$ to any other such subspace is an isomorphism of vector spaces. Hence, all maximal positive definite subspaces in $L\otimes\mathbb{R}$ can be oriented in a coherent way.
A choice of such coherent orientations is called a \textit{positive sign structure} on $L$. We denote by $O^+(L)$ (as opposed to $SO(L)$) the subgroup of $O(L)$ consisting of the isometries preserving a positive sign structure. Either one has $O^{+}(L)=O(L)$ or $O(L)^+$ is a subgroup of $O(L)$ of index $2$. In the latter case, each element of $O(L)\smallsetminus O^+(L)$ is called a \emph{skew-autoisometry} of L,\emph{ i.e.}, skew-autoisometries of $L$ are the autoisometries of $L$ that reverse the positive sign structure.

An important examples of autoisometries are reflections. For a vector $a\in L$, the reflection
\begin{align}
    t_a: x\mapsto x- \frac{2a(x\cdot a)}{a^2} \label{ta}
\end{align}
is well defined if and only if
\begin{align}
    \frac{2a}{a^2}\in L^{\vee}\label{wdfta}.
\end{align}
Note that $t_a$ is an involutive isometry of $L$. If $a^2=\pm1$ or $a^2=\pm2$, then $t_a$ acts identically on $\disc L$ and extends to any overlattice of $L$. Moreover, $a^2>0$ if and only if $t_a$ reverses the positive sign structure.

The \emph{hyperbolic plane} is the lattice $\mathbf{U}:=\mathbb{Z}u\oplus\mathbb{Z}v$, with $u^2=v^2=0$ and $u\cdot v$=1.
\subsection{Root Lattices}\label{root.lattices}
A \emph{root} in an even lattice is a vector of square $(-2)$. A \emph{root lattice} is a negative definite lattice generated by its roots. Each root lattice admits a unique decomposition into an orthogonal direct sum of irreducible ones which are of type $\textbf{A}_n$, $n\geq1$, $\textbf{D}_n$, $n\geq4$, or $\textbf{E}_n$, $n=6,7,8$. For further details on irreducible root systems see \cite{Bour}.

Given a root lattice $S$, we have $O(S)=R(S)\rtimes\operatorname{Sym}(\Gamma)$, where $R(S)\subset O(S)$ is the group generated by reflections against roots and $\operatorname{Sym}(\Gamma)$ is the group of  symmetries of the Dynkin graph $\Gamma_S:=\Gamma$. Let $\operatorname{Sym}'(\Gamma)$ be the group of symmetries of $\textbf{E}_8$-type components. Then the kernel of the map $d\colon O(S)\rightarrow\Aut (\disc S)$ is $R(S)\rtimes\operatorname{Sym}'(\Gamma)$ and, hence,
$d$ admits a partial section,\emph{ i.e.}, an isomorphism
\begin{align}\label{Sym0}
    \operatorname{Im}d \cong\operatorname{Sym}_0(\Gamma)\subset\operatorname{Sym}(\Gamma)\subset O(S)
\end{align}
where $\operatorname{Sym}_0(\Gamma)$ is the group of symmetries acting identically on the union of $\textbf{E}_8$-type components.
\subsection{Lattice extensions}\label{lattice.extensions}
From now on, unless specified otherwise, all lattices considered are nondegenerate and even.
An \emph{extension} of a lattice $S$ is an over lattice $L\supset S$. An \emph{isomorphism} between two extensions $L',L''$ is a bijective isometry $L'\rightarrow L''$ identical on $S$. More generally, for a given subgroup $G\subset O(S)$, we define $G$-\emph{isomorphisms} of extensions of $S$ as those which restrict to an element of $G$ on $S$.

Given a \emph{finite index extension} $L\supset S$ (\emph{i.e.}, $S$ is a finite index subgroup of $L$), there is a unique embedding $L\subset S\otimes \mathbb{Q}$. Then we have a chain of inclusions
\begin{align*}
    S\subset L\subset L^{\vee}\subset S^{\vee}.
\end{align*}
The subgroup $\K:=L/S\subset S^{\vee}/S=\disc S$ is called the \emph{kernel} of the finite index extension $L\supset S$. Since $L$ is an even integral lattice, the restriction to $\K$ of the quadratic form $q$ on $\disc S$ is trivial, \emph{i.e.}, $\K$ is \emph{isotropic}. Conversely, given an isotropic subgroup $\K\subset \disc S$, the lattice $L:=\{x\in S\otimes \mathbb{Q} \mid x\bmod S\in \K\}$ is an extension of $S$. Hence, we have the following result.
\begin{proposition}[Nikulin \cite{Niku2}]\label{L-K}
Let $S$ be a nondegenerate even lattice, and fix a subgroup $G\subset
O(S)$. The map $L\mapsto \mathcal{K}=L/S \subset \disc S$
establishes a one-to-one correspondence between the set of
$G$-isomorphism classes of finite index extensions $L\supset S$
and the set of $G$-orbits of isotropic subgroups
$\mathcal{K}\subset \disc S$. Under this correspondence one
has $\disc L=\mathcal{K}^{\bot}/\mathcal{K}$. Furthermore an autoisometry of $S$ extends to a finite index extension $L\supset S$ if and only if it preserves $\K$.
\end{proposition}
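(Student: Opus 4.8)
The plan is to show that the two constructions recalled just before the statement are mutually inverse, then to track the action of $O(S)$ on both of them, and finally to read off $\disc L$. Throughout I would keep the chain $S\subset L\subset L^\vee\subset S^\vee$ inside $S\otimes\Q$ in view, together with the fact that, since $S$ is nondegenerate, $\disc S$ is finite; hence every subgroup $\K\subset\disc S$ is finite and $L_\K:=\{x\in S\otimes\Q\mid x\bmod S\in\K\}$ is automatically a \emph{finite index} extension of $S$.

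\emph{Mutual inverseness.} Given a finite index extension $L\supset S$, the canonical embedding $L\subset S\otimes\Q$ identifies $L$ with $L_\K$ for $\K=L/S$ straight from the definition of $L_\K$; conversely $L_\K/S=\K$. The one equivalence worth isolating is ``$q|_\K=0$'' $\Leftrightarrow$ ``$L_\K$ is even'': since $q$ on $\disc S$ is defined by $q(x\bmod S)=x^2\bmod2\Z$, an element $x\in L_\K$ has $x^2\in2\Z$ exactly when $q$ vanishes on $x\bmod S$, and ``even'' then gives ``integral'' for free (as $2(x\cdot y)=(x+y)^2-x^2-y^2\in2\Z$). It is essential to use the quadratic form $q$ here and not merely the bilinear form $b$: the weaker condition $b|_\K=0$ alone would only produce an odd integral overlattice, outside the theory of even lattices being used.

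\emph{The $O(S)$-action and the extension criterion.} Any $g\in O(S)$ extends $\Q$-linearly to $S\otimes\Q$, hence acts on $S^\vee$ and factors through $\disc S$; applied to an extension $L\supset S$ it produces an extension $g(L)\supset S$ with kernel $g(L/S)$, so $g$ carries the extension with kernel $\K$ to the one with kernel $g(\K)$. Conversely, an isometry $L'\to L''$ restricting to $g\in G$ on $S$ necessarily coincides with the $\Q$-linear extension of $g$ on $L'\otimes\Q=S\otimes\Q$, so such an isometry exists iff that extension maps $L'$ onto $L''$, i.e.\ iff $g(\K')=\K''$. Taking $L'=L''$ gives the last sentence of the statement ($g$ extends to $L$ iff $g(\K)=\K$); passing to $G$-isomorphism classes on one side and $G$-orbits on the other yields the asserted bijection.

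\emph{The discriminant form.} From $S\subset L\subset L^\vee\subset S^\vee$ one gets $\disc L=L^\vee/L=(L^\vee/S)/(L/S)$, so it remains to identify $L^\vee/S$ with $\K^\bot\subset\disc S$. If $x\in L^\vee$ then $x\in S^\vee$ (restrict functionals along $S\subset L$), so $x\cdot s\in\Z$ for all $s\in S$ holds automatically; hence $x\in L^\vee$ iff $x\cdot y\in\Z$ for all $y\in L$, which upon reduction mod $S$ reads $(x\bmod S)\cdot\overline y=0$ in $\Q/\Z$ for every $\overline y\in\K$, i.e.\ $x\bmod S\in\K^\bot$. Thus $L^\vee/S=\K^\bot$ and $\disc L=\K^\bot/\K$. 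The only genuinely delicate part of the whole argument is this bookkeeping — consistently tracking which form ($q$ or $b$) and which of the nested groups $S\subset L\subset L^\vee\subset S^\vee$ one is in at each step; no single verification is hard, but conflating ``$q$-isotropic'' with ``$b$-isotropic'' would silently break the correspondence on the $2$-primary component.
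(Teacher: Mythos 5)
Your proposal is correct and follows essentially the same route as the paper, which presents exactly these two mutually inverse constructions ($L\mapsto\K=L/S$ and $\K\mapsto\{x\in S\otimes\Q\mid x\bmod S\in\K\}$) in the paragraph preceding the statement and then cites Nikulin. Your write-up merely fills in the routine verifications (evenness via the quadratic rather than bilinear form, $G$-equivariance, and $L^{\vee}/S=\K^{\bot}$) that the paper leaves implicit.
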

An extension $L\supset S$ is called \emph{primitive} if $L/S$ is torsion free. Clearly, $L$ is a finite index extension of $S\oplus N$, where $N:=S^{\bot}$ is also primitive in $L$, and by Proposition \ref{L-K}, it is described by its kernel
\begin{equation*}
    \K\subset \disc (S\oplus N)=\disc S\oplus \disc N.
\end{equation*}
Since $S$ and $N$ are both primitive in $L$, the kernel $\mathcal{K}$  does not intersect with any of $\disc S$ and $\disc N$. It follows that the projection maps
\begin{equation*}
    \operatorname{proj}_{S}\colon \K \rightarrow \disc S\mbox{ and } \operatorname{proj}_{N}\colon \K \rightarrow \disc N
\end{equation*}
are both monomorphisms. Since $\K$ is isotropic, it is the graph of a bijective anti-isometry $\psi\colon \mathcal{S}'\rightarrow\mathcal{N}'$, where $\mathcal{S}'= \operatorname{proj}_{S}(\K)$ and $\mathcal{N}'= \operatorname{proj}_{N}(\K)$. Conversely, given a bijective anti-isometry $\psi\colon\mathcal{S}'\rightarrow\mathcal{N}'$ where $\mathcal{S}'\subset\disc S$ and $\mathcal{N}'\subset \disc N$, the graph of $\psi$ is an isotropic subgroup $\mathcal{K}\subset\disc S\oplus \disc N$ and the corresponding finite index extension $L\supset S\oplus N$ is a primitive extension whose kernel is $\K$. Thus, we have the following statement (\cf. Nikulin~\cite{Niku2}).
\begin{lemma}\label{genlemma}
Given two nondegenerate even lattices $S$, $N$ and a subgroup $G\subset
O(S)\times O(N)$, there is a one-to-one correspondence between the set of $G$-isomorphism classes of finite index extensions $L\supset S\oplus N$  in which both $S$ and $N$ are primitive  and that of $G$-conjugacy classes of bijective anti-isometries
\begin{equation}\label{isodisc}
    \psi\colon \mathcal{S}'\rightarrow \mathcal{N}'
\end{equation}
where $\mathcal{S}'\subset \disc S$ and $\mathcal{N}'\subset \disc N$. Furthermore, a pair of isometries $f_1\in O(S)$ and $f_2\in O(N)$ extends to $L$ if and only if $f_1|_{\mathcal{S}'}=\psi^{-1}f_2|_{\mathcal{N}'}\psi$ in $\Aut (\mathcal{S}')$.
\end{lemma}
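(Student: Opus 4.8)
The statement to prove is Lemma~\ref{genlemma}, characterizing primitive finite index extensions $L\supset S\oplus N$ in terms of anti-isometries between subgroups of the discriminant forms.

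\textbf{Proof plan.} The plan is to deduce Lemma~\ref{genlemma} from Proposition~\ref{L-K} together with the discussion immediately preceding the statement, which already contains essentially all of the geometry; what remains is to organize it carefully and track the role of the group $G$. First I would recall that, by Proposition~\ref{L-K} applied to the lattice $S\oplus N$ and the group $G\subset O(S)\times O(N)\subset O(S\oplus N)$, the $G$-isomorphism classes of finite index extensions $L\supset S\oplus N$ are in bijection with $G$-orbits of isotropic subgroups $\K\subset\disc(S\oplus N)=\disc S\oplus\disc N$, with $\disc L=\K^\bot/\K$. So the task reduces to translating ``isotropic subgroup $\K$ such that the extension is primitive in both $S$ and $N$'' into ``graph of a bijective anti-isometry $\psi\colon\dS'\to\N'$'', and checking that $G$-orbits of the former match $G$-conjugacy classes of the latter.

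Next I would carry out the translation in both directions. For the forward direction: given such a $\K$, primitivity of $S$ in $L$ means $S=L\cap(S\otimes\Q)$, which is equivalent to $\K\cap\disc S=0$, i.e., $\operatorname{proj}_N|_\K$ is injective; symmetrically $\operatorname{proj}_S|_\K$ is injective by primitivity of $N$. Hence $\K$ is the graph of an isomorphism $\psi\colon\dS'\to\N'$ with $\dS'=\operatorname{proj}_S(\K)$, $\N'=\operatorname{proj}_N(\K)$; the condition that $q$ vanishes on $\K$ reads $q_S(x)+q_N(\psi x)=0$ for all $x\in\dS'$, i.e., $\psi$ is an anti-isometry (it automatically respects the bilinear forms as well, so it is an anti-isometry of quadratic forms). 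For the converse: given a bijective anti-isometry $\psi\colon\dS'\to\N'$, its graph $\K=\{(x,\psi x):x\in\dS'\}$ is a subgroup on which $q$ vanishes, hence isotropic, and $\K\cap\disc S=\K\cap\disc N=0$, so the associated finite index extension $L\supset S\oplus N$ (via Proposition~\ref{L-K}) is primitive in both summands. These two constructions are mutually inverse, which is immediate from the definitions.

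Then I would check $G$-equivariance of this correspondence. An element $g=(f_1,f_2)\in G$ acts on $\disc S\oplus\disc N$ diagonally; it sends the graph of $\psi\colon\dS'\to\N'$ to the graph of $f_2\circ\psi\circ f_1^{-1}$ (restricted to $f_1(\dS')$). Thus two isotropic graphs lie in the same $G$-orbit precisely when the corresponding anti-isometries are $G$-conjugate in the stated sense, which gives the claimed bijection on equivalence classes. Finally, for the last sentence of the lemma: a pair $(f_1,f_2)\in O(S)\times O(N)$ extends to $L$ if and only if, by the extension clause of Proposition~\ref{L-K}, it preserves the isotropic subgroup $\K=\operatorname{graph}(\psi)$; since $\K$ is a graph, this preservation is equivalent to $f_1(\dS')=\dS'$, $f_2(\N')=\N'$, and $f_2\circ\psi=\psi\circ f_1$ on $\dS'$, i.e., $f_1|_{\dS'}=\psi^{-1}f_2|_{\N'}\psi$ in $\Aut(\dS')$.

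\textbf{Main obstacle.} There is no deep difficulty here: the one genuinely delicate point is to be scrupulous about the meaning of ``$G$-isomorphism'' and ``$G$-conjugacy'' so that the bijection is stated at the level of the correct equivalence classes, and to note that under the inclusion $O(S)\times O(N)\hookrightarrow O(S\oplus N)$ the notions of $G$-isomorphism used in Proposition~\ref{L-K} and in Lemma~\ref{genlemma} agree. Everything else is a routine unwinding of the preceding paragraph, so I would keep the write-up short and largely refer back to that discussion and to Proposition~\ref{L-K}.
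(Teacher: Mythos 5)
Your proposal is correct and follows essentially the same route as the paper: the paper's own justification is precisely the discussion preceding the lemma, which applies Proposition~\ref{L-K} to $S\oplus N$, observes that primitivity of both summands is equivalent to the kernel $\K$ meeting neither $\disc S$ nor $\disc N$ (so that both projections are injective and $\K$ is the graph of a bijective anti-isometry), and reverses the construction. Your additional explicit verification of $G$-equivariance and of the extension clause only spells out what the paper leaves implicit.
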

If $L$ above is unimodular, $\disc L=0$, we have
$\mathopen|\disc S\mathclose|\mathopen|\disc N\mathclose|=\mathopen|\mathcal{S}'\mathclose|\mathopen|\mathcal{N}'\mathclose|$. Hence, $\mathcal{S}'=\disc S$ and $\mathcal{N}'=\disc N$ and $\psi$ in \eqref{isodisc} is an anti-isomorphism $\disc S\rightarrow\disc N$. Since also $\sigma_{\pm}N=\sigma_{\pm}L-\sigma_{\pm}S$, it follows that the genus $g(N)$ is determined by the genera $g(S)$ and $g(L)$; we will denote this common genus by $g(S^{\bot}_L)$ (We emphasize that $g(S^{\bot}_L)$  merely encodes a ``local data" composed formally from $g(S)$ and $g(L)$; apriori, it may even be empty, \emph{cf.} Theorem \ref{th.N.existence} below). If $L$ is also indefinite, it is unique in its genus (see, \emph{e.g.}, Siegel~\cite{SiegelI,SiegelII,SiegelIII}). Then, given  a subgroup $G\subset O(S)$ and unimodular even indefinite lattice $L$, a $G$-isomorphism class of a primitive extension $L\supset S$ is determined by a choice of
\begin{enumerate}
\item an even lattice $N\in g(S^{\bot}_L)$, and
\item a bi-coset in $G\backslash \Aut (\disc N)/O(N)$.
\end{enumerate}
In particular the extension $L\supset S$ exists if and only if the genus $g(S^{\bot}_L)$ is nonempty.

From now on we fix the notation $\textbf{L}:= 2\textbf{E}_8\oplus3\textbf{U}$. Note that $2\textbf{E}_8\oplus3\textbf{U}$ is the unique even unimodular lattice of signature $(3,19)$.  We are concerned about this lattice, since it is the intersection index form of a $K3$-surface. More precisely, we are interested in the embeddings to this lattice $\L$. For the ease of the references, we recast a special case of Nikulin's existence theorem as a criterion for $g(S^{\bot}_{\textbf{L}})\neq\emptyset$.
\begin{theorem}[Nikulin~\cite{Niku2}]\label{th.N.existence}
Given a nondegenerate even lattice~$S$, a primitive extension $\textbf{L}\supset S$ exists
if and only if
the following conditions hold
\begin{enumerate}
\item $\sigma_+ S\leq 3$, $\sigma_-S\leq 19$ and $\ell(\mathcal{S})\leq 22- \operatorname{rk} S$, where $\mathcal{S}=\disc S$;
\item one has  $|{\mathcal{S}}|\det_p (\mathcal{S}) =(-1)^{\sigma_+S-1} \bmod (\Z_p^{\times})^2$ for each  odd prime $p$ such that  $\ell_p(\mathcal{S})= 22- \operatorname{rk} S$;
\item If $\ell_2(\mathcal{S})= 22- \operatorname{rk} S$, and $\mathcal{S}_2$ is even  then $|{\mathcal{S}}|\det_2 (\mathcal{S}) =\pm 1 \bmod (\Z_2^{\times})^2$.
\end{enumerate}
\end{theorem}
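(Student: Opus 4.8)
The plan is to deduce Theorem~\ref{th.N.existence} as a specialization of Nikulin's existence criterion for even lattices with prescribed signature and discriminant form, via the correspondence just set up. First I would reformulate the problem in terms of the prospective orthogonal complement. Since $\mathbf{L}=2\mathbf{E}_8\oplus3\mathbf{U}$ is even, unimodular and indefinite, the discussion following Lemma~\ref{genlemma} shows that a primitive extension $\mathbf{L}\supset S$ exists if and only if the genus $g(S^{\bot}_{\mathbf{L}})$ is nonempty, \ie, if and only if there exists an even lattice $N$ with $\sigma_+N=3-\sigma_+S$, $\sigma_-N=19-\sigma_-S$, and with $\disc N$ anti-isomorphic to $\mathcal{S}=\disc S$ (equivalently, isomorphic to the group $\mathcal{S}$ carrying the form $-q$). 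For the ``only if'' direction one takes $N=S^{\bot}_{\mathbf{L}}$; for the ``if'' direction one glues $S$ with such an $N$ along the anti-isometry of Lemma~\ref{genlemma}, obtaining an even unimodular lattice of signature $(3,19)$, which must be $\mathbf{L}$ because that lattice is unique in its genus. Thus the theorem reduces to deciding which triples $\bigl(3-\sigma_+S,\,19-\sigma_-S,\,-\mathcal{S}\bigr)$ occur as (signature, discriminant form) of an even lattice.

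Next I would quote Nikulin's theorem~\cite{Niku2} on the existence of an even lattice with a prescribed signature $(t_+,t_-)$ and a prescribed nondegenerate finite quadratic form as discriminant form, applied to $(t_+,t_-)=(3-\sigma_+S,\,19-\sigma_-S)$ and to the form $-\mathcal{S}$. Its hypotheses are: $t_\pm\geq0$; $t_++t_-\geq\ell(\mathcal{S})$; the congruence $t_+-t_-\equiv\operatorname{sign}(-\mathcal{S})\bmod8$ (Milgram's formula); and, for every prime $p$ with $\ell_p(\mathcal{S})=t_++t_-$, a $p$-adic determinant congruence. Here $t_++t_-=22-\operatorname{rk}S$, so the first two hypotheses are precisely condition~(1). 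The congruence mod~$8$ is automatic: by Milgram's formula $\operatorname{sign}(-\mathcal{S})=-\operatorname{sign}(\mathcal{S})\equiv-(\sigma_+S-\sigma_-S)\bmod8$, while $t_+-t_-=(3-\sigma_+S)-(19-\sigma_-S)=-16-(\sigma_+S-\sigma_-S)\equiv-(\sigma_+S-\sigma_-S)\bmod8$ because $-16\equiv0$; hence it holds identically and contributes nothing to the list.

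It remains to rewrite the $p$-adic conditions, which is the one genuinely delicate point. For an odd prime $p$ with $\ell_p(\mathcal{S})=22-\operatorname{rk}S$, Nikulin's condition is the $p$-adic reduction of the determinant rule $\det N=(-1)^{\sigma_-N}|\disc N|$, namely $(-1)^{t_-}|\mathcal{S}|\det_p(-\mathcal{S})\equiv1\bmod(\Z_p^{\times})^2$; substituting $\det_p(-\mathcal{S})=(-1)^{\ell_p(\mathcal{S})}\det_p\mathcal{S}$, $\ell_p(\mathcal{S})=t_++t_-$ and $t_+=3-\sigma_+S$ turns this into $|\mathcal{S}|\det_p\mathcal{S}\equiv(-1)^{t_+}=(-1)^{\sigma_+S-1}\bmod(\Z_p^{\times})^2$, which is condition~(2). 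For $p=2$ Nikulin's criterion splits according to the parity type of $\mathcal{S}_2$: when $\mathcal{S}_2$ is even and $\ell_2(\mathcal{S})=22-\operatorname{rk}S$ it becomes $|\mathcal{S}|\det_2\mathcal{S}\equiv\pm1\bmod(\Z_2^{\times})^2$ (the $\pm$ absorbing the $(-1)^{t_-}$ and $(-1)^{\ell_2(\mathcal{S})}$ signs), \ie, condition~(3), whereas when $\mathcal{S}_2$ is odd there is no $2$-adic restriction, which is exactly why condition~(3) carries an evenness hypothesis; the $\{1,5\}$-indeterminacy of $\det_2$ from Definition~\ref{detp} never intervenes, since~(3) concerns only the even case and~(2) only odd primes. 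I expect the main obstacle to be keeping this last $2$-adic bookkeeping straight: quoting the correct form of Nikulin's local criterion (distinguishing $\mathcal{S}_2$ even from odd, and $\ell_2(\mathcal{S})=22-\operatorname{rk}S$ from $\ell_2(\mathcal{S})<22-\operatorname{rk}S$) and tracking the interlocking parities of $t_-$, $\ell_p(\mathcal{S})$ and $\operatorname{rk}S$ as well as the passage between the bilinear and the quadratic discriminant forms; the reduction in the first paragraph and the rank and $\bmod 8$ conditions are purely formal.
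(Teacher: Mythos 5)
The paper offers no proof of this statement---it is presented verbatim as a citation of Nikulin's existence theorem (``we recast a special case of Nikulin's existence theorem as a criterion for $g(S^{\bot}_{\mathbf{L}})\neq\emptyset$'')---and your derivation is precisely the intended specialization, so the approaches coincide. Your three key steps are all correct: the reduction to nonemptiness of the genus $g(S^{\bot}_{\mathbf{L}})$ (already recorded in the paper after Lemma~\ref{genlemma}), the observation that the $\bmod\,8$ condition is automatic because Milgram's formula applied to the even lattice $S$ itself forces $\operatorname{sign}(\mathcal{S})\equiv\sigma_+S-\sigma_-S\pmod 8$ while $t_+-t_-$ differs from $-(\sigma_+S-\sigma_-S)$ by $-16$, and the sign bookkeeping $(-1)^{t_-}(-1)^{\ell_p(\mathcal{S})}=(-1)^{t_+}=(-1)^{\sigma_+S-1}$ together with the waiving of the $2$-adic condition exactly when $\mathcal{S}_{[2]}$ splits off an odd cyclic summand of order $2$.
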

\section{Projective models of K3-Surfaces}\label{projective.models.of.K3surfaces}
In this section, we consider the \emph{projective models} of a smooth $K3$-surface $X$, \emph{i.e.}, the morphisms $f_h\colon X\rightarrow \mathbb{P}^{n+1}$ defined by a complete linear system $|h|$ without fixed components and such that  $h\in NS(X)\subset H_2(X;\Z)$ and $h^2=2n>0$. Given a projective model $f_h\colon X\rightarrow \mathbb{P}^{n+1}$, the class $h$ is called the \emph{polarization}. Note that $\operatorname{dim}f_h(X)=2$. It can be found in \cite{Donat} that only the following two cases can happen:
\begin{enumerate}
  \item either $f_h$ is birational mapping of $X$ onto a surface of degree $2n$,
  \item or $f_h$ is two-to-one mapping of $X$ onto a surface of degree $n$.
\end{enumerate}
A projective model $f_h$ as in  $(1)$ is called \emph{birational} and it is is called \emph{hyperelliptic} if it is as in $(2)$. By Saint-Donat~\cite{Donat}, we have the following result about hyperelliptic models of K3-surfaces:
\begin{proposition}
The projective model $f_h\colon X\rightarrow \mathbb{P}^{n+1}$ with $h^2=2n$ is hyperelliptic if and only if
\begin{itemize}
  \item [(i)] either $n=1$ or,
  \item [(ii)] $n=4$ and $h=2h'$ for some vector $h'\in NS(X)$ ($h$ is imprimitive) or,
  \item [(iii)] there is a vector $e\in NS(X)$ such that $e^2=0$ and $e\cdot h=2$ .
\end{itemize}
\end{proposition}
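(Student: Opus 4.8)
The plan is to prove the proposition by invoking Saint-Donat's analysis of complete linear systems $|h|$ on a $K3$-surface. I would first recall the general dichotomy (already stated in the excerpt): for $h\in\operatorname{NS}(X)$ with $h^2=2n>0$ and $|h|$ without fixed components, the morphism $f_h$ is either birational onto a degree-$2n$ surface or a two-to-one cover of a degree-$n$ surface. So the content is to identify precisely when the second (hyperelliptic) alternative occurs, and this is exactly Saint-Donat's criterion~\cite{Donat}. The natural strategy is therefore to split into the two implications and reduce each to the sublattice-theoretic data $(h^2,e\cdot h)$ for suitable $e$.

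First I would prove the ``if'' direction by exhibiting the hyperelliptic pencil in each case. In case (i), $n=1$ means $h^2=2$, so $f_h\colon X\to\mathbb{P}^2$ has degree $h^2=2$ and is automatically a double plane. In case (ii), $h=2h'$ with $(h')^2=2$; then $|h|=|2h'|$ and the map factors through the degree-$2$ map $f_{h'}\colon X\to\mathbb{P}^2$ composed with a Veronese-type embedding, so $f_h$ is two-to-one onto a Veronese surface, of degree $n=4$. In case (iii), an effective $e$ with $e^2=0$ (after possibly adjusting $e$ within its class using $(-2)$-curves, so that $|e|$ is a genuine elliptic pencil) together with $e\cdot h=2$ gives an elliptic fibration $X\to\mathbb{P}^1$ whose fibers are mapped by $|h|$ to conics; the induced degree-$2$ map onto the resulting scroll shows $f_h$ is hyperelliptic. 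The slightly delicate point here is checking that $e$ may be taken to be the class of a nef (hence base-point-free) elliptic pencil; this is standard for $K3$-surfaces but should be stated.

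Next I would prove the ``only if'' direction: if $f_h$ is hyperelliptic then one of (i)--(iii) holds. By Saint-Donat, if $f_h$ is not birational then there are only a few possibilities for the image surface — it is either a surface of minimal degree $n$ in $\mathbb{P}^{n+1}$ (a rational normal scroll, the Veronese surface in $\mathbb{P}^5$, or a plane for $n=1$) or the Veronese. If the image is a plane, $n=1$, giving (i). If the image is the Veronese surface in $\mathbb{P}^5$, then $h$ is divisible by $2$ and $h^2=8$, i.e.\ $n=4$, giving (ii). In the remaining (scroll) case, the pullback of a ruling line of the scroll is a class $e\in\operatorname{NS}(X)$ with $e^2=0$ and $e\cdot h=2$, giving (iii). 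Here I would cite Saint-Donat's classification of the image of a non-birational $f_h$ rather than reprove it.

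The main obstacle, and the part I expect to require the most care, is the ``only if'' direction: it hinges on Saint-Donat's structural description of non-birational projective models of $K3$-surfaces, which is not elementary and really has to be quoted as a black box from~\cite{Donat}. Given that input, the translation into the stated numerical/lattice conditions on $h$ and on an auxiliary class $e$ is routine. Since the excerpt explicitly attributes the proposition to Saint-Donat, the ``proof'' in the paper is presumably just this citation together with the short verifications in cases (i)--(iii) above; I would present it in exactly that spirit.
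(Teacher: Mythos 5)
Your proposal is correct and matches the paper exactly: the paper gives no proof at all, simply attributing the statement to Saint-Donat~\cite{Donat}, and your outline (the birational/two-to-one dichotomy plus Saint-Donat's classification of the non-birational images as a plane, the Veronese surface, or a scroll whose ruling pulls back to a class $e$ with $e^2=0$, $e\cdot h=2$) is precisely the argument that citation stands in for. The only point you rightly flag as delicate --- trading the lattice class $e$ for an actual elliptic pencil --- is likewise absorbed into the reference in the paper.
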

Note that the intersection lattice $L_X:=H_2(X;\Z)$ is of the form
\begin{equation*}
    L_X=H_2(X;\Z)\cong \L=2\textbf{E}_8\oplus3\textbf{U}.
\end{equation*}
Given a projective model $f_h\colon X\rightarrow \mathbb{P}^{n+1}$, we introduce the following notations:
\begin{itemize}
  \item $S_X\subset L_X$: the sublattice generated by the curves contracted by  $f_h$;
  \item $S_{X,h}:=S_X\oplus \mathbb{Z}h_X\subset L_X$ where $h_X=h\in NS(X)$ is the class of the pull-back of a generic plane section of $X$;
  \item $\tilde{S}_X \subset \tilde{S}_{X,h}\subset L_X$: the primitive hulls of $S_X$ and $S_{X,h}$, respectively, \textit{i.e}, $\tilde{S}_X:=(S_X\otimes\mathbb{Q})\cap L_X$ and  $\tilde{S}_{X,h}:=(S_{X,h}\otimes\mathbb{Q})\cap L_X$;
  \item $\omega_X \subset L_X\otimes \mathbb{R}$: the oriented $2$-subspace spanned by the real and imaginary parts of the class of a holomorphic $2$-form on $X$ (the \emph{period} of $X$).
\end{itemize}
Recall that all singularities are \emph{simple}
and, hence, $S_X$ is a \emph{root} lattice, \emph{i.e.}, a negative definite lattice generated by vectors of square $(-2)$ (\emph{roots}).
The triple $(S_X,h_X,L_X)$ is called the \emph{homological type} of the projective model $f_h\colon X\rightarrow \mathbb{P}^{n+1}$. This triple has certain properties depending on the ``kind" of the model. Here, fixing the ``kind of the model" includes
\begin{itemize}
  \item [(i)] fixing the degree,
  \item [(ii)]deciding whether the model is birational or hyperelliptic and
  \item [(iii)]sometimes, assuming some additional geometric properties detectable homologically, \emph{e.g.}, presence or absence of certain classes, \emph{cf}. Definition~\ref{badvectors}
\end{itemize}
To capture those  properties of a homological type of certain kinds of models, we have the following definition:
\begin{definition}\label{badvectors}
Let $L$ be a lattice isomorphic to $\L$. Depending on the geometric problem, we define   ``bad vectors'' of a polarized sublattice $S_h\subset L$ containing a distinguished vector $h$ with $h^2=2n$,  as the vectors $e\in \tilde{S}_h:=(S_h\otimes\Q)\cap L$ satisfying one of the following properties:
\begin{enumerate}
  \item $e^2=0$ and $e\cdot h= 1$ (fixed components);
  \item $e^2=0$ and $e\cdot h= 2$ (linear generatricies);
  \item $e^2=0$ and $e\cdot h= 3$ (cubic equations);
  \item $n=4$ and $2e=h$ (Veronese polarization).
\end{enumerate}
\end{definition}
\begin{remark}
 Note that the existence of a vector as in Definition \ref{badvectors}(2) implies the existence of a vector as in (1) in the definition. Hence, usually only the vectors as in (2) are mentioned  whereas the vectors as in (3) are excluded if $n=4$ and we want to consider thriquadrics rather than all octic surfaces.
\end{remark}
The definition below is intended to capture the necessary arithmetical properties of the homological types of models of K3-surfaces. Therefore it depends on the kinds of models considered which is assumed to be fixed in advance.
\begin{definition}\label{abstract.homological.type}
Let $S$ be a root lattice and $n$ be an integer such that $n\geq1$. An \emph{abstract homological type (extending $S$)} \emph{associated} to a given kind of model is an extension of $S_h:=S\oplus\Z h$, $h^2=2n$, to a lattice $L$ isomorphic to $\L$ satisfying the following conditions:
\begin{enumerate}
  \item each vector $e\in(S\otimes\mathbb{Q})\cap L$ with $e^2=-2$ and $e\cdot h=0$ is in $S$;
  \item depending on the kind of the models considered, the primitive hull $\tilde{S}_h:=(S_h\otimes\Q)\cap L$ should not contain the ``\emph{bad vectors}" (specified on the case by case basis, see below). Most notably,
  \begin{itemize}
    \item vectors as in Definition \ref{badvectors}(1) are always excluded,
    \item vectors as in Definition \ref{badvectors}(2) and (4) are excluded if and only if the model is birational.
  \end{itemize}
\end{enumerate}
\end{definition}
Commonly used birational projective models/polarizations are as follows (where we mention also precise type of \emph{bad vectors}, by referring to the names introduced in the Definition \ref{badvectors}, that are to be excluded in Definition \ref{abstract.homological.type}).
\begin{enumerate}
 \item $h^2=4$: The image of $f_h\colon X\rightarrow \mathbb{P}^3$ is a quartic (spatial model); the excluded bad vectors are fixed components as in (1) and linear generatrices as in (2).
 \item $h^2=6$: The image of  $f_h\colon X\rightarrow \mathbb{P}^4$ is a sextic given by a complete intersection of a quadric and cubic (sextic model); the excluded bad vectors are fixed components as in (1) and linear generatrices as in (2).
 \item $h^2=8$: The image of  $f_h\colon X\rightarrow \mathbb{P}^5$ is an octic (octic model); in the most general case the excluded bad vectors are fixed components as in (1), linear generatrices as in (2) and Veronese polarizations as in (4). We can distinguish triquadric vs. all octics; in the former case the bad vectors as in (3) are also to be excluded.
\end{enumerate}
Commonly used    hyperelliptic projective models are as follows:
\begin{enumerate}
   \setcounter{enumi} 3  \item $h^2=2$: The map $f_h\colon X\rightarrow \mathbb{P}^2$ is a degree $2$ map ramified at a sextic curve $C\subset \mathbb{P}^2$ (planar model); the excluded bad vectors are fixed components as in (1).
     \item $h^2=4$: The map $f_h\colon X\rightarrow \mathbb{P}^1\times\mathbb{P}^1$ is a degree $2$ map ramified at a curve $C\subset \mathbb{P}^1\times\mathbb{P}^1$ of bidegree $(4,4)$; the excluded bad vectors are the fixed components as in (1), whereas at least one linear generatrice as in (2) are \textit{assumed}.
\end{enumerate}
In section~\ref{Applications}, we consider examples of the planar model with $h^2=2$ and the spatial model with $h^2=4$.

\marginpar{\em{\color{red}}}
We use the notation $\mathcal{H}=(S\oplus \Z h\subset L)$ for an abstract homological type extending  the root lattice $S$. An abstract homological type $\mathcal{H}=(S\oplus\Z h\subset L)$ is called \emph{maximizing} if $\operatorname{rk}S=19$ (the maximal possible), \emph{i.e.}, $\operatorname{rk}S^{\bot}_h=2$.
An \textit{isomorphism} between two abstract homological types $\mathcal{H}_i=(S_i\oplus \Z h_i\subset L_i)$, $i=1,2$, is an isometry $L_1\rightarrow L_2$, taking $h_1$ to $h_2$ and $S_1$  to $S_2$ (as a set). A \emph{skew-automorphism} of an abstract homological type $\mathcal{H}=(S\oplus\Z h\subset L)$ is a skew-autoisometry of $L$ preserving $S$ (as a set) and $h$.

Given an abstract homological type $\mathcal{H}=(S\oplus \Z h\subset L)$, the group of autoisometries of the primitive hull $\tilde{S}_h=(S_h\otimes\Q)\cap L$  preserving $h$ is denoted by $O_h(\tilde{S}_h)$. Obviously we have
\begin{equation*}
   O_h(\tilde{S}_h)\subset O_h(S_h)=O(S).
\end{equation*}
Note that $S_h^{\bot}$ is a nondegenerate lattice with $\sigma_+S_h^{\bot}=2$, hence a choice of an orientation of one positive definite $2$-subspace in $S_h^{\bot}\otimes\mathbb{R}$ defines a coherent orientation of any other.
\begin{definition}
An \emph{orientation} of an (abstract) homological type $\mathcal{H}=(S\oplus \Z h\subset L)$ is a positive sign structure $\theta$ on $S_h^{\bot}$. Oriented abstract homological types $(\mathcal{H}_i,\theta_i)$, $i=1,2$, are \emph{isomorphic} if there is an isomorphism $ \mathcal{H}_1\rightarrow\mathcal{H}_2$ taking $\theta_1$ to $\theta_2$. The type $\mathcal{H}$ is called \emph{symmetric} if it admits a skew-automorphism, \emph{i.e.}, $(\mathcal{H},\theta)\cong(\mathcal{H},-\theta)$ for some orientation $\theta$ of $\mathcal{H}$.
\end{definition}

Due to Saint-Donat~\cite{Donat} and Urabe~\cite{Urabe2}, a homological type $\mathcal{H}_X=(S_X,h_X,L_X)$ of a projective model $f_h\colon X\rightarrow \mathbb{P}^{n+1}$ is an abstract homological type (as in the Definition \ref{abstract.homological.type}). Then, the period $\omega_X$ of $X$ defines an orientation of $\mathcal{H}_X$.
\begin{theorem}[\emph{cf.} Theorem 2.3.1 in \cite{AI}]\label{def.class}
The map sending a projective model $f_h\colon X\rightarrow\mathbb{P}^{n+1}$ to its oriented homological type establishes a one-to-one correspondence between equisingular deformation classes of models of a certain fixed kind $\mathfrak{K}$ (with a fixed set of simple singularities $S$) and orientation preserving isomorphism classes of oriented abstract homological types (extending $S$) associated to $\mathfrak{K}$. The homological types of complex conjugate strata differ by orientations.
\end{theorem}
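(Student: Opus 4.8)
The plan is to deduce Theorem~\ref{def.class} from the arithmetical machinery already assembled in \S2 and \S3, essentially by reading off the classification of $K3$-surfaces (via the surjectivity of the period map and the global Torelli theorem) through the filter of Proposition~\ref{L-K} and Lemma~\ref{genlemma}. First I would recall the moduli-theoretic input: an equisingular deformation class of a model $f_h\colon X\to\mathbb{P}^{n+1}$ of fixed kind $\mathfrak{K}$ is the same as a connected component of the space of triples $(X,h,\{\text{nodal classes}\})$, which by the Torelli theorem is governed by the position of the period $\omega_X$ relative to the marked sublattice $S_{X,h}\subset L_X$. Concretely, fixing an isometry $L_X\cong\L$, the datum of the model up to equisingular deformation amounts to: the sublattice $S_X$ (a copy of the abstract root lattice $S$), the polarization vector $h$, the primitive hull $\tilde S_{X,h}$, and the period $\omega_X$ lying in the appropriate connected component of the relevant period domain in $(S_{X,h}^\perp)\otimes\mathbb{R}$. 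The point is that the period domain attached to $S_{X,h}^\perp$ (a lattice of signature $(2,\mathrm{rk}\,S_{X,h}^\perp-2)$) has exactly two connected components, interchanged by complex conjugation and distinguished precisely by a positive sign structure $\theta$ on $S_h^\perp$ — this is the content of the sentence in the excerpt preceding the definition of orientation. So an equisingular deformation class is recorded by an oriented abstract homological type, and conversely every such oriented type is realized: surjectivity of the period map provides a $K3$-surface $X$ with the prescribed $NS(X)\supseteq S_{X,h}$ and prescribed period, and the conditions (1)–(2) in Definition~\ref{abstract.homological.type} (no extra $(-2)$-classes orthogonal to $h$, no bad vectors) guarantee that $f_h$ is a model of exactly kind $\mathfrak{K}$ with exactly the singularities $S$, by Saint-Donat~\cite{Donat} and Urabe~\cite{Urabe2}.

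The second step is bookkeeping: two models give the same oriented homological type (up to the orientation-preserving isomorphisms in the statement) if and only if they lie in the same equisingular deformation class. One direction is clear — deformation does not change the discrete data and preserves the sign structure by continuity. For the converse I would invoke the genericity of the period: within a fixed oriented homological type the set of admissible periods $\omega_X$ (those with $\omega_X^\perp\cap\{(-2)\text{-classes}\}$ contained in $S_X$, and avoiding the bad-vector walls) is a connected open subset of one component of the period domain — connectedness here is the standard fact that removing a countable union of hyperplane sections of positive codimension from an irreducible domain leaves it connected — and any two periods in it are joined by a path, which by surjectivity of the period map lifts to an equisingular deformation of models. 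Thus the map is a bijection between the two sets.

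The last sentence, that complex conjugate strata differ exactly by the orientation, follows by tracking how $\operatorname{conj}$ acts: replacing $X$ by $\bar X$ sends $\omega_X$ to $\bar\omega_X$, which reverses the orientation of the maximal positive definite $2$-subspace (the real and imaginary parts of the holomorphic form swap roles up to sign), hence negates $\theta$ on $S_h^\perp$, while fixing $S$ and $h$ up to isometry. Reconciling this with the (ambiguous) real structure $\operatorname{c}$ on $\M$ is exactly the earlier remark that the induced action on components is well defined.

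I expect the main obstacle to be the connectedness/genericity argument in the second step — making precise that the complement of the bad-vector walls and the nodal-overshoot walls inside one component of the period domain is connected, and that a path there genuinely lifts to an \emph{equisingular} family of \emph{models} of the fixed kind (not merely of $K3$-surfaces), i.e.\ that no singularities are created or destroyed and the linear system $|h|$ stays fixed-component-free along the path. Everything else is a translation of Proposition~\ref{L-K}, Lemma~\ref{genlemma} and Theorem~\ref{th.N.existence} into the language of oriented homological types; since the paper cites Theorem 2.3.1 of~\cite{AI} for the same statement, I would in practice lean on that reference for the delicate transversality and confine my own argument to the orientation bookkeeping.
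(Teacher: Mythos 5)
The paper offers no proof of Theorem~\ref{def.class}: it is quoted from Degtyarev--Itenberg~\cite{AI} (Theorem~2.3.1), and your outline reproduces the standard argument behind that reference --- Torelli plus surjectivity of the period map, connectedness of the complement of the countably many walls (complex codimension one, hence real codimension two) in one component of the period domain, and the observation that conjugation reverses the orientation of the positive definite $2$-plane spanned by the period. Your sketch is correct in approach and correctly isolates the one genuinely delicate point (lifting a path of periods to an equisingular family of \emph{models} of the fixed kind), which is precisely what the cited reference supplies.
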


\section{Real structures}
\subsection{Real models}
Let X be a complex analytic variety. A \emph{real structure} on X is an anti-holomorphic map $c\colon X\rightarrow X$ which is an involution. The fixed point set $X_{\mathbb{R}}:=\operatorname{Fix} c$ is called the \emph{real part} of $X$. A subvariety $Y\subset X$ is called \emph{real} if $c(Y)=Y$. A \emph{real model} is a pair $(f_h,c)$, where $f_h \colon X\rightarrow \mathbb{P}^n$ is a projective model of a smooth $K3$-surface $X$ and $c$ is a real structure on $X$ preserving $h$, \emph{i.e.}, such that $c(h)=-h$. Note that for a projective model $f_h \colon X\rightarrow \mathbb{P}^n$, we have $\mathbb{P}^n=|h|^{\vee}$, hence the real structure $c$ gives a real structure on $\mathbb{P}^n=|h|^{\vee}$.

Recall that, if $n$ is even, there is one \emph{standard} real structure up to isomorphism (and up to deformation equivalence) on $\mathbb{P}^n$ given by the standard complex conjugation $\operatorname{conj}\colon\mathbb{P}^n\rightarrow\mathbb{P}^n$, $z=(z_0:z_1:\ldots:z_{n})\mapsto \bar{z}=(\bar{z}_0:\bar{z}_1:\ldots:\bar{z}_{n})$ in appropriate homogeneous coordinates. If $n=2k+1$ is odd, there are two real structures  up to isomorphism (and up to deformation equivalence) on $\mathbb{P}^n$.
One of them is the standard one $\operatorname{conj}\colon\mathbb{P}^n\rightarrow\mathbb{P}^n$, $z\mapsto\bar{z}$ mentioned above and the second one is given by $c_2\colon\mathbb{P}^{2k+1}\rightarrow\mathbb{P}^{2k+1}$, $(z_0:z_1:\ldots:z_{2k}:z_{2k+1})\mapsto (\bar{z}_1:-\bar{z}_0:\ldots:\bar{z}_{2k+1}:-\bar{z}_{2k})$. Note that the nonstandard real structure $c_2$ on $\mathbb{P}^{2k+1}$ has empty real part, \emph{i.e.}, $\operatorname{Fix}(c_2)=\emptyset$.

\begin{remark}
Given a real projective model $f_h \colon X\rightarrow \mathbb{P}^n$ where $n$ is odd, it is not obvious which one of the real structures $\operatorname{conj}$, $c_2$ (as above) on $\mathbb{P}^n=|h|^{\vee}$ is induced by the real structure on $X$. This question is answered by Kharlamov \cite{Kharlamov}, in terms of the induced action $c_{*}$ on the polarized lattice $(H_2(X),h)$.
\end{remark}
The following theorem is the arithmetical reduction of the main problem of finding real models.
\begin{theorem}[see Theorem $6.1$ in \cite{Alex2}]\label{real.model}
An abstract oriented homological type $\mathcal{H}$ is realized by a real model if and only if $\mathcal{H}$ admits an involutive skew-automorphism.
\end{theorem}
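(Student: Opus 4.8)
The plan is to reduce the statement to the complex classification of Theorem~\ref{def.class} --- equivalently, to the global Torelli theorem \cite{K3} and the surjectivity of the period map \cite{periodmap} --- together with the Torelli theorem for real $K3$-surfaces (see \cite{Niku4}, and the proof of Theorem~$6.1$ in \cite{Alex2}). For a real model $(f_h,c)$ write $X$ for the underlying $K3$-surface, $L_X=H_2(X;\Z)\cong\L$, and $c_*\in O(L_X)$ for the involution induced by $c$. The ``only if'' implication extracts a skew-automorphism from $c_*$; the ``if'' implication reconstructs a real structure from a given involutive skew-automorphism.

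For ``only if'', assume $(f_h,c)$ realizes $\mathcal H$. Since $c$ sends the curves contracted by $f_h$ to curves contracted by $f_h$, $c_*$ preserves $S_X$ as a set, and $c_*(h_X)=-h_X$ by the definition of a real model. The crucial point is that $c_*\in O^+(L_X)$: being anti-holomorphic, $c_*$ interchanges $H^{2,0}(X)$ and $H^{0,2}(X)$, hence preserves the period plane $\omega_X$ (whose complexification is $H^{2,0}(X)\oplus H^{0,2}(X)$) and acts on it with determinant $-1$; since $c_*$ also negates $h_X$ and $\omega_X\oplus\mathbb{R}h_X$ is a maximal positive definite subspace of $L_X\otimes\mathbb{R}$, these two sign changes cancel. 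Thus $g:=-c_*$ is a skew-autoisometry of $L_X$ with $g(h_X)=h_X$, $g(S_X)=S_X$ and $g^2=\id$; carried to $\mathcal H$ by the realizing isomorphism, it is an involutive skew-automorphism.

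For ``if'', let $g$ be an involutive skew-automorphism of $\mathcal H=(S\oplus\Z h\subset L)$. Writing $g|_S=w\sigma$ with $w\in R(S)$ and $\sigma\in\operatorname{Sym}(\Gamma_S)$, the reflection $w^{-1}$ acts trivially on $\disc(S\oplus\Z h)$, hence by Proposition~\ref{L-K} extends to $\widetilde w\in O^+(L)$ fixing $h$ and identical on $S_h^{\bot}$; using the relation $\sigma w\sigma^{-1}=w^{-1}$ (forced by $g^2=\id$) one checks that $\widetilde w\circ g$ is again an involutive skew-automorphism with $(\widetilde w\circ g)|_S=\sigma$, so one may assume $g|_S\in\operatorname{Sym}(\Gamma_S)$ from the start. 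Put $\tau:=-g$: an involution with $\tau(h)=-h$, $\tau(S)=S$, preserving the positive sign structure of $L$ (it is a product of the skew maps $g$ and $-\id$). Then $\tau$ preserves $S_h$ and hence $S_h^{\bot}$ (on which $\sigma_+=2$), is skew on $S_h$ (it negates $h$) but not on $L$, hence is skew on $S_h^{\bot}$; a connectedness argument on the Grassmannian of maximal positive definite $2$-subspaces of $S_h^{\bot}$ shows that each eigenlattice of $\tau|_{S_h^{\bot}}$ carries exactly one positive square. Choose a period $\omega=u+iv$ with $\tau u=u$, $\tau v=-v$, $u^2=v^2>0$, $u\cdot v=0$ (so $\tau(\omega)=\bar\omega$), generic with respect to $L$ --- which is automatic when $\operatorname{rk}S=19$, since then $\omega^{\bot}\cap L=\tilde S_h$ is forced --- and with the sign picked so as to give the orientation $\theta$. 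By surjectivity of the period map there is a $K3$-surface $X$ with $H^{2,0}(X)=\mathbb{C}\omega$, and for a suitable marking the K\"ahler cone of $X$ is a $g$-invariant Weyl chamber of $\tilde S_h$ having $h$ in its closure; then $f_h\colon X\to\mathbb{P}^{n+1}$ is a projective model of oriented homological type $\mathcal H$ with $\tau(\mathcal K_X)=-\mathcal K_X$. The Torelli theorem for real $K3$-surfaces now furnishes an anti-holomorphic involution $c$ on $X$ with $c_*=\tau$, and since $c_*(h_X)=-h_X$, the pair $(f_h,c)$ is a real model realizing $\mathcal H$.

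The main obstacle is the pair of ``compatibility'' inputs in the ``if'' direction, both routine in spirit but requiring care: first, that $\omega$ can be chosen simultaneously in the prescribed $\tau$-eigenspace configuration on $S_h^{\bot}$ and generic enough with respect to $L$, so that $NS(X)=\tilde S_h$ and $\tilde S_h$ acquires none of the ``bad vectors'' of Definition~\ref{badvectors} --- when $\operatorname{rk}S<19$ one must verify that the admissible periods form a positive-dimensional family not swallowed by the countable union of the relevant hyperplanes; second, that the marking can be chosen so that the K\"ahler cone of $X$ is a $g$-invariant chamber, which is exactly the compatibility of $\tau$ with the K\"ahler cone needed to turn the abstract involutive lattice symmetry into a genuine anti-holomorphic involution. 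Granting these, the remainder is bookkeeping with Theorem~\ref{def.class}.
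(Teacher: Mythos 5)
The paper does not prove this statement at all: it is quoted verbatim from \cite{Alex2} (Theorem~6.1 there), so there is no in-paper argument to compare against. Your reconstruction follows the standard route --- and, as far as I can tell, essentially the route of the cited source: extract $-c_*$ from a real structure using the sign count on the invariant maximal positive $3$-space $\omega_X\oplus\mathbb{R}h$ for the ``only if'' part, and for the ``if'' part normalize $g|_S$ into $\operatorname{Sym}(\Gamma_S)$ by a Weyl-group element extended to $L$, place the period into the $(\pm1)$-eigenspaces of $\tau=-g$ on $S_h^{\bot}$ (each of which carries exactly one positive square because $\tau$ is skew on $S_h^{\bot}$), and invoke surjectivity of the period map together with the Torelli theorem for real $K3$-surfaces. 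The argument is correct in outline; the two points you flag (genericity of the $\tau$-invariant period so that $NS(X)=\tilde S_h$, and the choice of a $g$-invariant K\"ahler chamber adjacent to $h$) are exactly the nontrivial inputs, and both are handled by the normalization $g|_S\in\operatorname{Sym}_0(\Gamma_S)$ plus the standard fact that the admissible real periods form a nonempty set not exhausted by the countably many excluded hyperplanes. Since the paper itself defers entirely to \cite{Alex2}, your write-up is an acceptable stand-in for the missing proof rather than a divergence from it.
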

\subsection{Finding real representatives}
By Theorem \ref{real.model}, to obtain a real model, we will attempt to find involutive skew-automorphisms of the abstract homological type $\mathcal{H}=(S\oplus\Z h\subset L)$.
This problem is straightforward for the maximizing abstract homological types; for the others, we discuss two approaches, via perturbations of abstract homological types and via reflections.

\emph{A (formal) perturbation} of an abstract homological type $\mathcal{H}=(S\oplus\Z h\subset L)$ is any abstract homological type $\mathcal{H}'=(S'\oplus\Z h\subset L)$ such that $S'\subset S$ is primitive in $S$ and the embedding $S'\subset L$ is the restriction of the embedding $S\subset L$. Note that any perturbation of a primitive abstract homological type is also primitive. Most abstract homological types can be obtained by a perturbation from maximizing homological types and this phenomena can be used by means of the following proposition.
\begin{proposition}\label{real.pert}
If an abstract homological type $\mathcal{H}$ is realized by a real birational model $(f_h,c)$, then any $c$-invariant perturbation $\mathcal{H}'$ is also realized by a real model.
\end{proposition}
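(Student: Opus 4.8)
\textbf{Proof plan for Proposition~\ref{real.pert}.}
The plan is to transport the real structure $c$, viewed homologically as an involutive skew-autoisometry $c_*$ of $L_X$ furnished by Theorem~\ref{real.model}, along the inclusion of lattices underlying the perturbation. By hypothesis $\mathcal{H}=(S\oplus\Z h\subset L)$ is realized by the real birational model $(f_h,c)$, so by Theorem~\ref{real.model} there is an involutive skew-automorphism $\phi$ of $\mathcal{H}$: an involutive skew-autoisometry $\phi\colon L\to L$ with $\phi(h)=h$ and $\phi(S)=S$ (as a set). The perturbation $\mathcal{H}'=(S'\oplus\Z h\subset L)$ has $S'\subset S$ primitive, with the embedding $S'\subset L$ the restriction of $S\subset L$, and the assumption ``$c$-invariant'' means precisely $\phi(S')=S'$ (as a set). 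The obvious candidate for an involutive skew-automorphism of $\mathcal{H}'$ is $\phi$ itself.

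First I would check that $\phi$ is a skew-automorphism of $\mathcal{H}'$ in the sense of the definition preceding Theorem~\ref{def.class}: it is an autoisometry of $L$, it fixes $h$, and it preserves $S'$ as a set, the latter being exactly the $c$-invariance hypothesis; it is involutive because $\phi$ is; and it is skew because the skew property is a property of the action on $L$ (equivalently, on the positive sign structure of $L\otimes\mathbb R$), which does not change under replacing $S$ by $S'$. Next I would verify that $\mathcal{H}'$ is genuinely an abstract homological type in the sense of Definition~\ref{abstract.homological.type}, so that Theorem~\ref{real.model} applies to it — but this is noted already in the text (``any perturbation of a primitive abstract homological type is also primitive''), and conditions~(1) and~(2) of Definition~\ref{abstract.homological.type} for $S'$ follow from those for $S$ together with primitivity of $S'$ in $S$ (hence in $L$) and the fact that $h$ and $L$ are unchanged; the exclusion of bad vectors is inherited because $\tilde S'_h\subset\tilde S_h$. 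Finally, $\phi$ preserves the orientation data of $\mathcal H'$ exactly as it does for $\mathcal H$, since $S_h'^{\bot}\supset S_h^{\bot}$ carries a compatible positive sign structure; applying Theorem~\ref{real.model} in the reverse direction to the oriented type $(\mathcal H',\theta')$ equipped with the involutive skew-automorphism $\phi$ then produces a real model realizing $\mathcal{H}'$.

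The only genuine subtlety — the ``hard part,'' such as it is — is making sure the hypothesis is being used in the right form: one must know that the real structure $c$ on $X$ induces an isometry $c_*$ that literally restricts to $L$ as our $\phi$ and that ``$c$-invariant perturbation'' is being interpreted as $\phi$-invariance of $S'$, not merely some deformation-theoretic invariance; this is a matter of unwinding Theorem~\ref{real.model} and the definition of a perturbation, and once that identification is in place the argument is immediate. I do not expect any real computation: the proposition is essentially the observation that an involutive skew-automorphism of $\mathcal H$ that happens to preserve the smaller root sublattice $S'$ is automatically an involutive skew-automorphism of the perturbed type $\mathcal H'$, and Theorem~\ref{real.model} converts this back and forth between lattice data and geometry. (It is worth remarking that birationality of the model enters only through Definition~\ref{abstract.homological.type}: it is what pins down which bad vectors are excluded, and since $\tilde S'_h\subset\tilde S_h$ the relevant exclusions survive the perturbation.)
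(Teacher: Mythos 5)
The paper states Proposition~\ref{real.pert} without proof --- it is followed only by a remark deferring the hyperelliptic case to~\cite{Alex2} --- so there is no argument of the author's to set yours against; judged on its own, your proof is correct, and it is almost certainly the intended one. The content is exactly what you describe: the ``only if'' direction of Theorem~\ref{real.model} turns the real model into an involutive skew-automorphism $\phi$ of $\mathcal{H}$, the $c$-invariance hypothesis says $\phi(S')=S'$, the involutive and skew properties live on $L$ alone and therefore persist when $S$ is replaced by $S'$, and the ``if'' direction of Theorem~\ref{real.model} applied to $\mathcal{H}'$ finishes. One small imprecision: with the paper's convention a real structure satisfies $c_*(h)=-h$, so the skew-automorphism fixing $h$ is $-c_*$ rather than $c_*$ itself; this is harmless, since $S'$ is $c_*$-invariant if and only if it is $(-c_*)$-invariant. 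Two further remarks. First, your verification that $\mathcal{H}'$ satisfies Definition~\ref{abstract.homological.type} is strictly redundant: the paper's definition of a (formal) perturbation already requires $\mathcal{H}'$ to be an abstract homological type, so Theorem~\ref{real.model} applies to it by fiat (your check is nonetheless correct, and it is what makes the definition non-vacuous for birational kinds). Second, as you note, the lattice argument never visibly uses birationality; that hypothesis matters only where the exclusion (rather than the assumed presence) of bad vectors must be inherited by $\tilde S'_h\subset\tilde S_h$, or if one instead argues geometrically by an equivariant perturbation of the singular points of the image surface --- which may well be what the author had in mind and would explain both the hypothesis and the separate reference for the hyperelliptic case.
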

\begin{remark}
In the computations in chapter~\ref{Applications}, the above proposition holds for birational models; for the hyperelliptic case a stronger statement can be found in \cite{Alex2}.
\end{remark}
The following simple sufficient condition is in fact also necessary for the existence of a real model in which all exceptional divisors are real.
\begin{proposition}
Let $\mathcal{H}=(S\oplus\Z h\subset L)$ be an abstract homological type. If the transcendental lattice $T:=S_h^{\bot}$ contains a sublattice isomorphic to $[2]$ or $\mathcal{U}(2)$ then $\mathcal{H}$ is realized by a real model. Conversely, if $\mathcal{H}$ admits a real model under which all exceptional divisors are real, $T$ contains a sublattice isomorphic to $[2]$ or $\mathcal{U}(2)$.
\end{proposition}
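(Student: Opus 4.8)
The plan is to produce, from a sublattice $[2]$ or $\mathcal{U}(2)$ of $T:=S_h^{\bot}$, an explicit involutive skew-automorphism of $\mathcal{H}$, and then invoke Theorem~\ref{real.model}. First I would treat the easy direction. Suppose $T$ contains a vector $a$ with $a^2=2$ (the $[2]$ case); then $2a/a^2=a\in T\subset T^{\vee}$, so the reflection $t_a$ of $T$ is well defined by \eqref{ta}--\eqref{wdfta}, it is an involution, and since $a^2=2>0$ it reverses the positive sign structure on $T$. Because $a^2=2$, the isometry $t_a$ acts identically on $\disc T$. I would then extend $t_a$ to all of $L$ by the identity on $S_h$: since $\disc S_h\cong-\disc T$ under the gluing anti-isometry $\psi$ and $t_a$ acts as $\id$ on $\disc T$, Lemma~\ref{genlemma} shows the pair $(\id_{S_h}, t_a)$ extends to an isometry of $L$. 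This extension fixes $h$ (it is the identity on $S_h\supset\Z h$), preserves $S$ (even pointwise), is involutive, and reverses the sign structure on $T=S_h^{\bot}$, hence is a skew-automorphism of $\mathcal{H}$. If instead $T\supset\mathcal{U}(2)=\Z u\oplus\Z v$ with $u^2=v^2=0$, $u\cdot v=2$, I would use $a:=u+v$, which has $a^2=4$ and $2a/a^2=a/2$; one checks $a/2\cdot u=1\in\Z$ and $a/2\cdot v=1\in\Z$ and $a/2\cdot(u+v)=2\in\Z$, so $2a/a^2\in\mathcal{U}(2)^{\vee}\subset T^{\vee}$ and $t_a$ is well defined on $T$; since $a^2=4$, again $t_a$ acts identically on $\disc T$ (this uses that $a/2$ pairs integrally with all of $T$, i.e. $a/2\in T^{\vee}$, so $t_a$ differs from $\id$ by something landing in $T$). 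The same extension argument finishes this case.

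For the converse, suppose $(f_h,c)$ is a real model of $\mathcal{H}$ in which every exceptional divisor is real. The induced involution $c_*$ on $L_X$ is a skew-autoisometry with $c_*(h)=-h$ and, since each contracted curve is real, $c_*$ acts on $S_X$ as (plus or minus) a permutation fixing each irreducible component's class — in particular $c_*|_{\tilde S_X}$ is an involution that, together with $-h$, pins down $c_*$ on $S_{X,h}^{\vee}$ up to the behaviour on $T$. The key point is that $c_*$ restricts to an involution on $T=S_h^{\bot}$ that reverses the sign structure (skew) and, because $c_*$ is defined over $L$, acts on $\disc T$ compatibly (via $\psi$) with its action on $\disc S_h$. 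Reality of the exceptional divisors forces $c_*|_{\disc S_h}$, hence $c_*|_{\disc T}$, to be the identity — so $c_*|_T$ is an involution, acting identically on the discriminant, of a lattice of signature $(2,\operatorname{rk}T-2)$ that reverses one positive-definite plane. Diagonalizing the $(+1)$- and $(-1)$-eigenlattices $T_+,T_-$ of $c_*|_T$, the skew condition says $\sigma_+T_+\ge 1$ while $c_*=\id$ on $\disc T$ forces $T=T_+\oplus T_-$ (no glue), so $T_+$ is a positive-definite–containing direct summand of $T$ of signature $(\sigma_+T_+,\sigma_-T_+)$ with $\sigma_+T_+\ge1$; a minimal such summand is rank $1$ generated by a vector of square $2k>0$, or rank $2$. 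Analyzing the small cases and using that $T$ is even with $\ell(\disc T)$ bounded (Theorem~\ref{th.N.existence}), one extracts a primitive vector of square $2$, i.e. a copy of $[2]$, or a copy of $\mathcal{U}(2)$, inside $T$.

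The main obstacle is the converse, specifically pinning down exactly which even rank-$1$ or rank-$2$ positive-definite-containing summand can occur and showing it must contain $[2]$ or $\mathcal{U}(2)$: a priori the $(+1)$-eigenlattice could be, e.g., generated by a vector of square $2k$ with $k>1$, or be a rank-$2$ lattice with no obvious $\mathcal{U}(2)$. Here I would use that $c_*=\id$ on $\disc T$ splits $T$ as $T_+\perp T_-$ with $T_+$ unimodular-glued-trivially — hence $\disc T_+$ is itself a $2$-group of exponent dividing $2$ (since $T_+$ embeds the relevant part of a discriminant group killed by the involution), forcing $T_+$, among even positive-signature lattices with $2$-elementary discriminant and $\sigma_+=1$ or $2$, to be one of a very short list whose members all contain $[2]$ or $\mathcal{U}(2)$ (for instance $\langle 2\rangle$, $\mathcal{U}(2)$, $\langle 2\rangle\oplus\langle 2\rangle$, $\langle 2\rangle\oplus\mathcal{U}(2)$, etc.). This reduces the converse to a bounded lattice-enumeration exactly of the kind the machinery of \S2 is built for, and I expect it to go through cleanly once the ``no glue'' observation is in place.
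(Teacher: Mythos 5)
Your forward direction is fine for $[2]$ but has a real gap in the $\mathcal{U}(2)$ case. You set $a=u+v$, $a^2=4$, and argue $2a/a^2=a/2\in\mathcal{U}(2)^{\vee}\subset T^{\vee}$; that inclusion is backwards (dualizing reverses inclusions), so checking that $a/2$ pairs integrally with $u$ and $v$ says nothing about its pairing with the rest of $T$. If $T$ is a nontrivial overlattice of $\mathcal{U}(2)\oplus\mathcal{U}(2)^{\bot}$ containing, say, a class of the form $u/2+w$, then $a\cdot(u/2+w)=1$ is odd, $a/2\notin T^{\vee}$, and $t_a$ is not defined on $T$ at all. Moreover, even where $t_a$ is defined, the fact that reflections act identically on the discriminant is stated in the paper only for $a^2=\pm1,\pm2$: for $a^2=4$ one has $t_a(x)-x=-(x\cdot a)(a/2)$, which lies in $T$ only when $x\cdot a$ is even or $a/2\in T$, so triviality on $\disc T$ is not automatic either. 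The paper sidesteps both problems by using the involution $(-\id,\id)$ on $A\oplus A^{\bot}$ (which for $A=\mathcal{U}(2)$ is \emph{not} a reflection in a vector of $A$): since $\disc[2]$ and $\disc\mathcal{U}(2)$ are $2$-elementary, $-\id=\id$ on $\disc A$, so this pair induces the identity on the whole discriminant and extends to $L$ by Lemma~\ref{genlemma}; it is skew because $\sigma_+A=1$. Your $\mathcal{U}(2)$ case should be repaired along these lines.

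In the converse, the pivotal claim ``$c_*=\id$ on $\disc T$ forces $T=T_+\oplus T_-$ (no glue)'' is false: the reflection $t_{u-v}$ on $\mathbf{U}$ acts trivially on $\disc\mathbf{U}=0$, yet $\mathbf{U}$ is an index-$2$ overlattice of $\Z(u+v)\oplus\Z(u-v)$. In general one only gets $2T\subset T_+\oplus T_-$. The $2$-elementarity you want is true, but it comes from the unimodularity of $L$ (the eigenlattices of an involution of a unimodular lattice have isomorphic, $2$-elementary discriminants), not from an absence of glue inside $T$; and the enumeration you defer to the end (``a very short list\dots I expect it to go through'') is precisely Nikulin's classification of even $2$-elementary hyperbolic lattices, which is the nontrivial input. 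The paper's converse is exactly this: since $h$ and all exceptional classes are anti-invariant, $S_h\subset L_{-c}$, hence $L_{+c}\subset T$; the lattice $L_{+c}$ has signature $(1,k)$ and is $2$-elementary, and by Nikulin's classification of real structures on $K3$-surfaces every such lattice contains $[2]$ or $\mathcal{U}(2)$. As written, your converse rests on a false lemma and an uncompleted case analysis, so it does not yet constitute a proof.
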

\begin{proof}
If $A=[2]$ or $\mathcal{U}(2)$ is contained in $T=S_h^{\bot}$, then the pair $(-\id, \id)$ on $A\oplus A^{\bot}$ extends to $L$ by Lemma \ref{genlemma}; this extension is an involutive skew-automorphism which implies  the statement by Theorem~\ref{real.model}. Conversely, let
 $c\colon L\rightarrow L$ be an automorphism induced by a real structure as in the statement ($-c$ is an involutive skew-automorphism) and denote by $L_{\pm c}$ its eigenlattices. Note that $\sigma_+L_{-c}=2$ and $h\in L_{-c}$, hence $S\subset L_{-c}$ which implies $T\supset L_{+c}$. Then by Nikulin's classification of real structures on $K3$-surfaces (see \cite{Niku2}), $L_{+c}$ contains $[2]$ or $\mathcal{U}(2)$ and, hence, so does $T$.
\end{proof}
Let $\mathcal{H}=(S\oplus\Z h\subset L)$ be an abstract homological type and $T:=S_h^{\bot}$ be the transcendental lattice. We discuss the existence of real models realizing $\mathcal{H}$ case by case in terms of rank of $T$, explaining the role of reflections (the existence of which is established in the next section).
\subsubsection{The case $\operatorname{rk}T=2$}
The lattice $T$ is a positive definite lattice of rank $2$. By a classical and well known result of Gauss \cite{Gauss}, the group of isometries $O(T)$ is a finite group, which is easily computable. In fact, it turns out that any skew-autoisometry of $T$ is a reflection. (Note, though, that our approach for finding reflections in section \ref{section.reflection} does not apply here since $T$ is often not unique in its genus)
\begin{proposition}\label{rankT=2}
Any symmetric maximizing abstract homological type (extending $S$) admits an involutive skew-automorphism; equivalently,  any real component of the  strata $\mathcal{M}_1(S)$ contains a real model.
\end{proposition}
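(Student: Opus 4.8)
The plan is to reduce, via the gluing description of lattice extensions (Lemma~\ref{genlemma}), to the transcendental lattice $T:=S_h^{\bot}$, which by maximality ($\operatorname{rk}S=19$) has rank $2$ and is positive definite, and there to exploit that every skew-autoisometry of $T$ is a reflection, hence an involution. So assume $\mathcal{H}=(S\oplus\Z h\subset L)$ is symmetric and fix a skew-automorphism $\phi$. Since $\phi$ preserves $S$ and fixes $h$, it preserves $\tilde S_h$ and $T=\tilde S_h^{\bot}$; moreover, fixing $h$ it fixes the (one-dimensional) positive part of $\tilde S_h\otimes\mathbb{R}$, so the reversal of the positive sign structure on $L$ is carried entirely by $T$, i.e. $r:=\phi|_T$ is a skew-autoisometry of $T$. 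By Gauss's theory of binary forms (as noted in the discussion preceding the proposition) $r$ is a reflection $t_a$, in particular an involution, and $\bar r$ is an involution of $\disc T$.

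Now write $L$ as the unimodular finite-index extension of $\tilde S_h\oplus T$ (both summands primitive), corresponding by Lemma~\ref{genlemma} to an anti-isometry $\psi\colon\disc\tilde S_h\xrightarrow{\ \sim\ }\disc T$. By the same lemma, a pair $(g,r)$ with $g\in O(\tilde S_h)$ fixing $h$ and preserving $S$ glues to a skew-automorphism of $\mathcal{H}$ precisely when $\bar g=\psi^{-1}\bar r\psi=:\alpha$ on $\disc\tilde S_h$, and the result is involutive iff $g$ is (since $r$ is). As $\bar r$ is an involution, so is $\alpha$. Thus it suffices to find an \emph{involution} $g\in O(\tilde S_h)$ fixing $h$, preserving $S$, and inducing $\alpha$ on $\disc\tilde S_h$ — the restriction $\phi|_{\tilde S_h}$ does all of this except, possibly, being involutive. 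Passing to the finite-index sublattice $S\oplus\Z h\subset\tilde S_h$: an isometry fixing $h$ is determined by its restriction to $S$; it extends to $\tilde S_h$ iff the induced action on $\disc(S\oplus\Z h)=\disc S\oplus\Z/2n$ preserves the kernel $\mathcal{K}_0=\tilde S_h/(S\oplus\Z h)$ (Proposition~\ref{L-K}); and it is an involution iff its restriction to $S$ is. Since $S$ is a root lattice, the splitting \eqref{Sym0} realizes $\operatorname{Im}(O(S)\to\Aut\disc S)$ by a subgroup $\operatorname{Sym}_0(\Gamma)$ of graph symmetries sitting inside $O(S)$, so an involution of $\disc S$ lying in this image lifts to an involutive graph symmetry.

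The crux — and the only place maximality is really used — is that $\alpha$, an involution on the small group $\disc\tilde S_h\cong\disc T$ (of length $\le 2$), can be matched on $\mathcal{K}_0^{\bot}/\mathcal{K}_0=\disc\tilde S_h$ by the discriminant action of an \emph{involution} of $O(S)$, even though $\phi|_S$ itself need not be involutive: starting from $\gamma:=\overline{\phi|_S}\in\operatorname{Sym}_0(\Gamma)$, any order-$>2$ constituent of the corresponding graph symmetry (such as an order-$3$ permutation of isomorphic summands, or the order-$3$ part of an $\operatorname{Sym}(\mathbf{D}_4)$-symmetry) cannot affect $\alpha$ — otherwise $\alpha$ would fail to be an involution on a length-$\le 2$ group — and may therefore be replaced by an involutive symmetry (the identity, or a transposition) with the same effect on $\disc\tilde S_h$. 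Taking $g$ to be the extension to $\tilde S_h$ (by the identity on $\Z h$) of the resulting involutive graph symmetry and gluing $(g,r)$ via Lemma~\ref{genlemma} produces an involutive skew-automorphism of $\mathcal{H}$. The equivalence with the geometric statement is then immediate from Theorems~\ref{real.model} and~\ref{def.class}: a real component of $\mathcal{M}_1(S)$ is the equisingular deformation class of a symmetric oriented abstract homological type, which by the above admits an involutive skew-automorphism, hence is realized by a real model. I expect the careful justification of the crux — that an involution of $O(S)$ suffices — to be the main obstacle, requiring a primewise analysis of $\disc S$ against the graph-symmetry structure \eqref{Sym0} and making essential use of the rank-$2$ bound $\ell(\disc\tilde S_h)\le 2$.
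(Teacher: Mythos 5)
Your proposal follows essentially the same route as the paper's proof: restrict the given skew-automorphism to the rank-$2$ positive definite lattice $T$, note that any skew-autoisometry there is a reflection and hence an involution, transport the induced involution of the discriminant across the gluing anti-isometry, lift it to $O(S)$ via \eqref{Sym0}, and glue by Lemma~\ref{genlemma}. The only substantive difference is that the step you single out as the ``crux'' is immediate rather than delicate: since \eqref{Sym0} is a group \emph{isomorphism} $\operatorname{Im}d\cong\operatorname{Sym}_0(\Gamma)$, the involution $\alpha$ of the discriminant lifts automatically to an \emph{involutive} graph symmetry, so no primewise analysis and no replacement of order-$3$ constituents is needed (such constituents cannot occur at all, because $d$ is injective on $\operatorname{Sym}_0(\Gamma)$).
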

\begin{proof}
Let $\mathcal{H}=(S\oplus\mathbb{Z} h\subset L)$ be a maximizing symmetric abstract homological type. Since  any skew-autoisometry $r$ of $T=S^{\bot}_h$ is a reflection ($\operatorname{rk}T=2$), it acts as an involution on $\disc T\cong -\disc S$. Then by \eqref{Sym0}, the resulting involution in $\Aut(\disc S)$ is realized by an involution $r'\in \operatorname{Sym}_0(\Gamma_S)\subset O(S)$ (see section \ref{root.lattices}) and $r\oplus r'$ extends to an involution on $L$.
\end{proof}
\subsubsection{The case $\operatorname{rk}T=3$}
Typically, the group $O(T)$ is infinite; however we have the following simple characterization of involutive skew-autoisometries.
\begin{proposition}\label{rankT=3}
Let $T=S_h^{\bot}$ be a lattice of rank $3$. Then any involutive skew-autoisometry $r$ of $T$ is of the form $\pm r'$  where $r'$ is a reflection on $T$ .
\end{proposition}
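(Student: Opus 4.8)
The plan is to analyze the eigenspace decomposition of an involutive skew-autoisometry $r$ of the rank-$3$ lattice $T$ over $\mathbb{R}$, exploit the signature constraint $\sigma_+ T = 2$, and reduce to the rank-$2$ situation handled by the reflection analysis underlying Proposition \ref{rankT=2}. First I would pass to $T\otimes\mathbb{R}$ and decompose it as $T_+ \oplus T_-$, the $(+1)$- and $(-1)$-eigenspaces of $r$; these are orthogonal since $r$ is an isometry, so the signature of $T$ is the sum of the signatures of $T_+$ and $T_-$. Because $r$ is a \emph{skew}-autoisometry, it reverses the positive sign structure on $T$, and since $\sigma_+ T = 2$ while $\dim T = 3$, exactly one of the two eigenspaces contains the positive part in a ``reversing'' way; in particular one eigenspace must have dimension $1$ (a rank-$3$ involution cannot have both eigenspaces even-dimensional). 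So either $\dim T_+ = 1$ and $\dim T_- = 2$, or $\dim T_+ = 2$ and $\dim T_- = 1$.

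Next I would pin down which case occurs using the sign-reversal condition together with $\sigma_+ T = 2$. If $\dim T_- = 1$, then $T_+$ is a plane; for $r$ to reverse the positive sign structure, the restriction of $r$ to a maximal positive definite subspace must be orientation-reversing, which — given $\dim T_+ = 2$ and the eigenvalue pattern — forces $\sigma_+ T_+ = 1$, hence $\sigma_- T_+ = 1$ and $\sigma_+ T_- = 1$; then $r = t_a$ where $a$ spans the rational line $T_- \cap T$ (normalized so that $t_a$ is a well-defined reflection on $T$, using that $T$ is nondegenerate so $2a/a^2 \in T^\vee$ after scaling $a$ into $T$). In the complementary case $\dim T_+ = 1$, $T_-$ is the positive-definite-heavy plane and the same bookkeeping gives $\sigma_+ T_- = 2$ impossible unless we account for the overall sign: here $-r$ is again an involution whose $(-1)$-eigenspace is the line $T_+$, and $-r = t_b$ is a reflection against a vector $b$ spanning $T_+\cap T$. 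In both cases we conclude $r = \pm t_a$ for an appropriate $a \in T$, which is the assertion.

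The main obstacle I anticipate is the careful bookkeeping of \emph{which} eigenspace carries how much of the positive signature, and the correct normalization ensuring the candidate vector $a$ actually defines a reflection in the lattice-theoretic sense of \eqref{ta}–\eqref{wdfta} (i.e.\ that one can choose $a$ in $T$ itself with $2a/a^2 \in T^\vee$), rather than merely a reflection of $T\otimes\mathbb{Q}$. This is where one must use that $T$ is nondegenerate and even, and possibly replace $a$ by a primitive multiple; the reflection $t_a$ then automatically reverses the positive sign structure precisely because $a^2 > 0$ (by the last paragraph of §2.2), which matches the skew condition on $r$. A secondary subtlety is distinguishing the $+$ and $-$ cases honestly: one should note that $r$ and $-r$ cannot \emph{both} be reflections (they differ by $-\id$, which acts trivially on a rank-$3$... no — rather, $-\id$ is itself not a reflection in rank $3$), so the statement "$\pm r'$ with $r'$ a reflection" genuinely allows either sign and no further collapsing is claimed.
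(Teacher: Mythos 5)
Your argument follows essentially the same route as the paper's: decompose $T\otimes\mathbb{R}$ into the $(\pm1)$-eigenspaces of $r$, note that in rank $3$ one of them must be a line, and conclude that $r$ (if the line is the $(-1)$-eigenspace) or $-r$ (if it is the $(+1)$-eigenspace) is the reflection in that line. The substance is right, but one justification as written is a non sequitur: the parenthetical ``a rank-$3$ involution cannot have both eigenspaces even-dimensional'' does not exclude the splittings $(3,0)$ and $(0,3)$, i.e.\ $r=\pm\id$; these have to be ruled out by the skew hypothesis (both $\id$ and $-\id$ act with determinant $+1$ on a maximal positive definite $2$-plane, hence preserve the positive sign structure), which is exactly how the paper dispatches them --- your garbled aside about $-\id$ at the end should be promoted to this one-line argument. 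The signature bookkeeping in your second paragraph is not needed for the statement (and the sentence ``$\sigma_+T_-=2$ impossible unless we account for the overall sign'' does not parse as a deduction), though the conclusions you draw there are correct. Finally, your worry about $2a/a^2\in T^\vee$ resolves itself: for an \emph{integral} involution whose $(-1)$-eigenspace is a line, taking $a$ to be a primitive generator of that line intersected with $T$, one has $x-r(x)\in\mathbb{Z}a$ for every $x\in T$, and pairing with $a$ identifies the coefficient as $2(x\cdot a)/a^2$, so $r=t_a$ is automatically well defined; no rescaling is required.
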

\begin{proof}
We denote by $T_{\pm r}$ the $(\pm 1)$-eigenspaces of $r$. Recall that the involution $r$ is a reflection if and only if $\operatorname{dim}(T_{-r})=1$. Since here $\operatorname{rk}T=3$, one can have either $\operatorname{dim}(T_{-r})=1$ or $\operatorname{dim}(T_{+r})=1$ or $T_{-r}=0$ (and $r=\id$) or $T_{+r}=0$ (and $r=-\id$). In the last two cases $r$ is not a skew-autoisometry. If $\operatorname{dim}(T_{-r})=1$, the involution $r$ itself is a reflection and if $\operatorname{dim}(T_{+r})=1$, the map $-r$ is a reflection.
\end{proof}
\begin{remark}\label{plusminus.h}
If necessary, we multiply all the maps by $-1$ to make sure that the involutive skew-autoisometry  of $T$ is a reflection. Hence, in this case, we have to extend the group $O_h(S_h)$ to $O_{\pm h}(S_h):=O_h(S_h)\times\{\pm \id_h\}$ allowing the involution $h \mapsto -h$.
\end{remark}
Thus, by Proposition \ref{rankT=3} (and Remark~\ref{plusminus.h}), reflections on $T$ are enough to prove both the existence and non-existence of a real model realizing the strata in this case.
\subsubsection{The case $\operatorname{rk} T\geq 4$}
In this case, we can no longer guarantee that any involutive skew-autoisometry of $T$ is a reflection. However, in all examples considered in the paper, it turns out that each symmetric homological type with $\operatorname{rk}T\geq4$ does admit an involutive skew-autoisometry which is a reflection on $T$; thus, it appears that reflections still suffice to conclude the realizability of real strata by real models.
\section{Real structures via reflections}\label{section.reflection}
\subsection{The set-up}
Fix a primitive sublattice $M\subset L\cong\L$ and let $N:= M^\bot$ be its orthogonal complement. Fix also a subgroup $G\subset O(M)$.  Consider the finite index extension
\begin{equation}\label{L2}
    M\oplus N \subset L,
\end{equation}
both $M$ and $N$ being primitive in $L$. Our aim is to search for
 \begin{align}\label{aim}
    \text{an involution $\varphi\in G$ on $M$ such that $\varphi\oplus t_a$ extends to $L$}
\end{align}
where $a\in N$ is a primitive vector satisfying \eqref{wdfta} such that $a^2>0$ (since we want a map that reverses positive sign structure) and $t_a$ is a reflection as in \eqref{ta}. By \eqref{wdfta}, we have an apriori  bound
\begin{align}\label{adivides2exp}
    a^2\mathrel|2\operatorname{exp}(\disc N)
\end{align}
where $\operatorname{exp}(\disc N)$ is the exponent of the group $\disc N$.

Let $N'$ be the orthogonal complement of the primitive vector $a$,  \emph{i.e.,} $N':= a^{\bot}\subset N$. Then $N$ is a finite index extension of $\Z a\oplus N'$ and we have
\begin{equation}\label{L3}
    M\oplus\Z a\oplus N'\subset L.
\end{equation}
Hence to study finite index extension $L \supset M\oplus N$ as in \eqref{L2}, one can first study the finite index extension
\begin{equation*}
   N\supset\Z a\oplus N'.\label{fieT}
\end{equation*}
However, there is another approach: We start by analyzing finite index extension
\begin{equation}\label{fieL}
    \widetilde{M}_a:=(M\oplus\Z a)\otimes \Q \cap L\supset M\oplus\Z a=:M_a
\end{equation}
Then we have $L\supset \widetilde{M}_a\oplus N'$. Note that
\begin{align}\label{tildeprimitive}
    \text{$M\subset \widetilde{M}_a$ and $\Z a\subset \widetilde{M}_a$ are both primitive}.
\end{align}
Furthermore,
\begin{align}\label{inducesid}
    \text{$\varphi\oplus t_a$ should induce $\id$ on $\disc \widetilde{M}_a$}.
\end{align}
\begin{remark}
Strictly speaking, the approach above gives us the reflections $t_a$ in lattices which are in the genus of $N$. However, in our calculations usually $N$ is unique in its genus (see, section \ref{Applications}).
\end{remark}
Let $\A:=\disc\Z a\cong[\frac{1}{a^2}]$ generated by $\alpha:=a/a^2$. By Proposition \ref{L-K}, there is a one-to-one correspondence between the set of isomorphism classes of finite index extensions $\widetilde{M}_a\supset M_a$ satisfying certain properties and that of isotropic subgroups
\begin{equation*}
    \K \subset \disc (M\oplus \Z a)=\operatorname{disc}M\oplus \A,
\end{equation*}
and we have $\disc \widetilde{M}_a=\K^{\bot}/\K$ (see Lemma~\ref{genlemma}).
 \begin{lemma}\label{main.lemma}
 A pair $\K\subset \disc M \oplus \A$ and $\varphi\in O(M)$, $\varphi^2= \id$ satisfy conditions \eqref{tildeprimitive} and \eqref{inducesid} above if and only if
 \begin{enumerate}
   \item $\K$ is the cyclic group generated by $\vartheta:=\kappa\oplus na/a^2$ where $n=1$ or $2$ and $\kappa\in \disc M$ is such that
   $\operatorname{order}(\kappa)=a^2/n$ and $\kappa^2=-n^2/a^2$,
  \item $\varphi(\kappa)=-\kappa$ in $\disc M$ and,
  \item $\varphi\oplus t_a$ induces $\id$ on $\K^{\bot}/\K$.
 \end{enumerate}
 \end{lemma}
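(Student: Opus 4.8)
The plan is to unwind both directions through the dictionary of Proposition~\ref{L-K} and Lemma~\ref{genlemma}, treating the extension $\widetilde{M}_a\supset M\oplus\Z a$ as governed by its kernel $\K\subset\disc M\oplus\A$. First I would record the shape of $\K$. Since $\Z a$ is primitive in $\widetilde{M}_a$ by \eqref{tildeprimitive}, the kernel $\K$ must intersect $\A=\langle\alpha\rangle$ trivially, so $\operatorname{proj}_{\A}\colon\K\to\A$ is injective and $\K$ is the graph of an anti-isometry from a subgroup $\mathcal{M}'\subset\disc M$ to a subgroup of $\A\cong[\tfrac1{a^2}]$. But $\A$ is cyclic, hence $\K$ is cyclic, generated by some $\vartheta=\kappa\oplus n\alpha$ with $\kappa\in\disc M$ and $1\le n\mid a^2$. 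Primitivity of $M$ in $\widetilde M_a$ forces $\operatorname{proj}_M$ to be injective on $\K$, so $\operatorname{order}(\kappa)=\operatorname{order}(n\alpha)=a^2/n$; isotropy of $\K$ gives $0=\vartheta^2=\kappa^2+n^2\alpha^2=\kappa^2+n^2/a^2$ in $\Q/2\Z$, i.e. $\kappa^2=-n^2/a^2$. The only remaining point is that $n$ may be taken to be $1$ or $2$: by \eqref{adivides2exp} we have $a^2\mid 2\operatorname{exp}(\disc N)$, and more to the point the condition $\operatorname{order}(\kappa)=a^2/n$ with $\kappa\in\disc M$ together with $\kappa^2=-n^2/a^2$ being a well-defined element of $\Q/2\Z$ constrains $a^2/n$ to be the order of an element carrying that square form; I would check that any common divisor $n>2$ can be absorbed, reducing to $n\in\{1,2\}$ (this uses that $2\alpha/n$ still lies in the relevant dual group exactly when $n\mid 2$). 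This establishes (1).

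Next, the compatibility conditions. By the last sentence of Proposition~\ref{L-K}, an autoisometry of $M\oplus\Z a$ extends to $\widetilde M_a$ if and only if it preserves $\K$; and by \eqref{inducesid} we in addition demand that $\varphi\oplus t_a$ induce the identity on $\disc\widetilde M_a=\K^{\bot}/\K$, which is condition (3). So I only need to see that "preserves $\K$" plus "$\varphi^2=\id$" is equivalent to condition (2), $\varphi(\kappa)=-\kappa$. Here one uses that $t_a$ acts on $\A$ by $\alpha\mapsto -\alpha$ (indeed $t_a(a)=-a$, so $t_a(\alpha)=-\alpha$ on $\disc\Z a$). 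Thus $(\varphi\oplus t_a)(\vartheta)=\varphi(\kappa)\oplus(-n\alpha)$. This lies in $\K=\langle\kappa\oplus n\alpha\rangle$ iff $\varphi(\kappa)\oplus(-n\alpha)=j(\kappa\oplus n\alpha)$ for some integer $j$; comparing the $\A$-components, since $n\alpha$ has order $a^2/n$ and $-n\alpha$ equals $(a^2/n-1)\cdot n\alpha$, we get $j\equiv -1$, and then the $\disc M$-component gives $\varphi(\kappa)=-\kappa$. Conversely $\varphi(\kappa)=-\kappa$ makes $(\varphi\oplus t_a)(\vartheta)=-\vartheta\in\K$, so $\K$ is preserved. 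That gives the equivalence with (2), and combined with (3) (which is just \eqref{inducesid} restated) and the fact that $\varphi\in O(M)$ with $\varphi^2=\id$ is already built into the hypothesis, both implications are complete.

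The main obstacle I anticipate is the bookkeeping in step~(1) pinning down that $n\in\{1,2\}$ rather than an arbitrary divisor of $a^2$: one must argue that if $n$ had an odd prime factor, or a factor of $4$, the putative generator $\vartheta$ could be replaced by a smaller one without changing the extension $\widetilde M_a$, or else no isotropic $\K$ of that order exists because the square condition $\kappa^2=-n^2/a^2$ would be incompatible with $\kappa$ having order exactly $a^2/n$ in a finite quadratic form. This is essentially the well-definedness constraint "$2a/a^2\in L^\vee$" from \eqref{wdfta} transported to the discriminant level — the reflection $t_a$ exists only when $a^2\mid 2\operatorname{exp}(\disc N)$, and the same divisibility is what forces $n\le 2$. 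Everything else (the graph description of $\K$, the action of $t_a$ on $\A$, the translation of "extends" into "preserves $\K$") is formal from the machinery already set up in Section~2 and Lemma~\ref{genlemma}.
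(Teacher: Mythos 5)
Your description of the kernel $\K$ as the graph of an anti-isometry $\langle\kappa\rangle\to\langle n\alpha\rangle$, with $\ord(\kappa)=a^2/n$ and $\kappa^2=-n^2/a^2$, and your translation of conditions (2) and (3) through the extension criterion of Lemma~\ref{genlemma} (using that $t_a$ acts as $-1$ on $\A$) are correct and agree with the paper's argument. The gap is exactly where you anticipated it: the claim $n\in\{1,2\}$, and none of the justifications you sketch for it is right. It does not follow from \eqref{adivides2exp}, which is a condition on $a^2$ and $\disc N$, not on the kernel; it does not follow from the square condition being incompatible with $\ord(\kappa)=a^2/n$ for $n>2$ (take $a^2=16$, $n=4$: then one needs $\ord(\kappa)=4$ and $\kappa^2=-1\bmod 2\Z$, which is realized, for instance, by the sum of the generators of a summand $[\frac14]\oplus[\frac34]$ of $\disc M$, so an extension with both factors primitive and $n=4$ can perfectly well exist); and the generator of $\K$ cannot be ``absorbed'' or replaced by a smaller one, since $\K$ determines the extension $\widetilde M_a$.

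The actual source of the constraint is condition \eqref{inducesid}, which your argument never applies to the $\A$-side of $\K^\perp$. Set $m=a^2/n$. Then $m\alpha\in\K^\perp$, since $m\alpha\cdot\vartheta=mn\alpha^2=a^2/a^2=0\bmod\Z$, and $t_a(m\alpha)=-m\alpha$. For $\varphi\oplus t_a$ to induce the identity on $\K^\perp/\K$ one must have $2m\alpha\in\K$; writing $2m\alpha=j\vartheta$ and projecting to $\disc M$ gives $j\kappa=0$, hence $m\mid j$, hence $jn\alpha$ is a multiple of $a^2\alpha=0$, so in fact $2m\alpha=0$ in $\A$, i.e., $a^2\mid 2a^2/n$, i.e., $n\mid 2$. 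This is the one non-formal step of the lemma (and precisely how the paper proves it); as written, your ``only if'' direction replaces it with incorrect heuristics and is therefore incomplete.
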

 \begin{proof}

Since $\A$ is cyclic, by Lemma \ref{genlemma}, the extension  $\widetilde{M}_a\supset M\oplus \Z a$ gives rise to an anti-isometry
\begin{equation*}
    \psi'\colon \langle\kappa\rangle\rightarrow \langle n\alpha\rangle\subset\A
\end{equation*}
where $n$ divides $a^2=\operatorname{order}(\alpha)$; hence, $\operatorname{order}(\kappa)=\operatorname{order}(n\alpha)=a^2/n$ and  $\kappa^2=(n\alpha)^2=-n^2/a^2$.

Let $m=a^2/n$. Then $m\alpha\in\K^{\bot}$ and $t_a(m\alpha)=-m\alpha$. Hence, the condition $t_a(m\alpha)=m\alpha \bmod \K$ implies $2m\alpha=0$, \emph{i.e.}, $n=\operatorname{order}(\alpha)/m$ is $1$ or $2$.

In view of Lemma \ref{genlemma} again, statements $(2)$ and $(3)$ are a paraphrase of the condition that $\varphi\oplus t_a$ should extend $\id$ on $\disc \widetilde{M}_a$.
\end{proof}

Lemma \ref{main.lemma}, gives pairs $(\varphi,a)$ such that the involution $\varphi\oplus t_a$ extends to unimodular primitive extensions of $\widetilde{M}_a$. The only question remaining is whether such an extension $L\supset \widetilde{M}_a\oplus N'$ exists. The answer is given by Nikulin's existence theorem applied to the genus with discriminant $-(\K^{\bot}/\K)$ and signature $(2,19-\sigma_-M)$. We denote this genus depending on $M$ and the pair $(\kappa,n)$ by $\tilde{g}_n(M,\kappa)$ (see, section \ref{The.embedding}).
We have the following corollaries applied to $M=\tilde{S}_h$, for which instead of $O(M)$ we restrict to $\varphi\in O_{\pm h}(\tilde{S}_h)$
\begin{corollary}\label{corlem1}
Let a pair $(n,\kappa)$  and $\varphi\in O_{\pm h}(\tilde{S}_h)$ be as in the conclusion of Lemma \ref{main.lemma} and assume $\tilde g_n(\tilde{S}_h,\kappa)\neq\emptyset$, then $\tilde{S}_h$ extends to an abstract homological type admitting an involutive skew-automorphism.
\end{corollary}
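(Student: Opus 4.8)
The plan is to assemble the statement from Lemma~\ref{main.lemma}, the existence theorem underlying the nonemptiness of $\tilde g_n(\tilde S_h,\kappa)$, and Lemma~\ref{genlemma}; the one genuinely delicate point is the bookkeeping of positive sign structures together with the $h\mapsto-h$ normalization of Remark~\ref{plusminus.h}.

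First, from the data $(n,\kappa,\varphi)$ I would recover $a^2:=n\cdot\operatorname{order}(\kappa)>0$, set $\A=\disc(\Z a)\cong[\frac{1}{a^2}]$ generated by $\alpha=a/a^2$, and form $M_a:=\tilde S_h\oplus\Z a$ together with the subgroup $\K:=\langle\kappa\oplus n\alpha\rangle\subset\disc\tilde S_h\oplus\A$. By Lemma~\ref{main.lemma} the group $\K$ is cyclic and isotropic, so Proposition~\ref{L-K} produces a finite index extension $\widetilde M_a\supset M_a$ with $\disc\widetilde M_a=\K^\bot/\K$ in which $\tilde S_h$ and $\Z a$ are both primitive; moreover the involution $\varphi\oplus t_a$ preserves $\K$ (as $\varphi(\kappa)=-\kappa$ and $t_a(\alpha)=-\alpha$), hence extends to an involutive autoisometry $\widetilde\varphi$ of $\widetilde M_a$ acting as $\id$ on $\disc\widetilde M_a$.

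Next, using $\tilde g_n(\tilde S_h,\kappa)\neq\emptyset$, I would fix a lattice $N'$ in that genus. By construction $N'$ has the signature and the discriminant form of the orthogonal complement of $\widetilde M_a$ in an even unimodular lattice of signature $(3,19)$, so the discussion of unimodular indefinite extensions preceding Theorem~\ref{th.N.existence} yields a primitive extension $L\supset\widetilde M_a\oplus N'$ with $L$ even unimodular of signature $(3,19)$, whence $L\cong\L$. Since $\widetilde\varphi$ acts trivially on $\disc\widetilde M_a$, Lemma~\ref{genlemma} lets me extend the pair $(\widetilde\varphi,\id_{N'})$ to an involutive autoisometry $\Phi$ of $L$. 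As $\widetilde M_a$ is primitive in $L$ and spans the same rational subspace as $S_h$, the primitive hull of $S_h$ in $L$ equals $\tilde S_h$; the two conditions in Definition~\ref{abstract.homological.type} depend only on $\tilde S_h$ and the distinguished vector $h\in\tilde S_h$, hence still hold, so $\mathcal H:=(S\oplus\Z h\subset L)$ is an abstract homological type extending $S$.

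The final step, which I expect to be the main obstacle, is to check that $\Phi$ — possibly after a sign correction — is an involutive \emph{skew}-automorphism of $\mathcal H$. Since $\widetilde\varphi|_{\tilde S_h}=\varphi\in O_{\pm h}(\tilde S_h)$, the map $\Phi$ preserves $S$ and sends $h$ to $h$ or to $-h$. On the positive three-subspace of $L\otimes\mathbb{R}$, decomposed as $\mathbb{R}h\oplus\mathbb{R}a\oplus(\text{the positive line of }N'\otimes\mathbb{R})$, the automorphism $\Phi$ acts by $-1$ on $\mathbb{R}a$ (because $a^2>0$, so $t_a$ reverses that line), by $+1$ on the $N'$-line, and by $\pm1$ on $\mathbb{R}h$ according to whether $\Phi h=\pm h$; hence $\Phi$ reverses the positive sign structure precisely when $\Phi h=h$. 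If $\Phi h=h$ then $\Phi$ is the desired involutive skew-automorphism. If $\Phi h=-h$ then $-\Phi$ fixes $h$, preserves $S$, is still an involution, and reverses the positive sign structure (as $-\id$ has determinant $-1$ on a three-dimensional space), so $-\Phi$ does the job. In either case $\mathcal H$ admits an involutive skew-automorphism, which is the assertion of the corollary. The only point requiring real care is this orientation count with its $\pm h$ normalization; everything else is a direct application of the already-established lemmas.
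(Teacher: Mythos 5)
Your proposal is correct and follows essentially the same route as the paper: the paper does not write out a separate proof of this corollary, but derives it directly from the discussion preceding it (Lemma~\ref{main.lemma} supplies the pair $(\varphi,t_a)$ extending to $\widetilde M_a$, nonemptiness of $\tilde g_n(\tilde S_h,\kappa)$ supplies $N'$, and the unimodular-extension discussion before Theorem~\ref{th.N.existence} glues everything into $L\cong\L$), with the $h\mapsto -h$ sign correction handled exactly as in Remark~\ref{plusminus.h}. Your explicit verification that the conditions of Definition~\ref{abstract.homological.type} persist and your orientation count on the positive $3$-space are careful fillings-in of details the paper leaves implicit, not deviations from its argument.
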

\begin{corollary}\label{corlem2}
Let $\operatorname{rk} \tilde{S}_h=18$. Then, $\tilde{S}_h$ extends to an abstract homological type admitting an involutive skew-automorphism if and only if there exists a pair $(n,\kappa)$ and $\varphi\in O_{\pm h}(\tilde{S}_h)$ satisfying Lemma \ref{main.lemma} and $\tilde g_n(\tilde{S}_h,\kappa)\neq\emptyset$.
\end{corollary}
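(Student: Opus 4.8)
The two implications require rather different amounts of work, and I would dispatch one of them at once. The ``if'' direction is nothing but Corollary~\ref{corlem1}: a pair $(n,\kappa)$ together with $\varphi\in O_{\pm h}(\tilde S_h)$ as in the conclusion of Lemma~\ref{main.lemma} and $\tilde g_n(\tilde S_h,\kappa)\neq\emptyset$ are precisely the hypotheses of that corollary, and it already produces an abstract homological type extending $\tilde S_h$ and admitting an involutive skew-automorphism. (This direction uses neither the rank assumption nor anything beyond Corollary~\ref{corlem1}.) So the substance is the converse, and the plan is to start from a given involutive skew-automorphism and reconstruct the data $(n,\kappa,\varphi)$ step by step.

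Assume then that $\tilde S_h$ extends to an abstract homological type $\mathcal H=(S\oplus\Z h\subset L)$ carrying an involutive skew-automorphism $g$, so that $g^2=\id$, $g$ reverses the positive sign structure of $L$, $g(S)=S$ and $g(h)=h$. First I would record that $g$ preserves $S_h=S\oplus\Z h$, hence its primitive hull $M:=\tilde S_h$ and the transcendental lattice $T:=M^{\bot}=S_h^{\bot}$; set $\varphi:=g|_M$ and $r:=g|_T$, so $\varphi^2=r^2=\id$ and $\varphi(h)=h$. Since $\sigma_+L=\sigma_+M+\sigma_+T=1+2$ and $g$ fixes the positive line $\mathbb{R}h\subset M\otimes\mathbb{R}$ pointwise, $g$ must reverse the orientation of a maximal positive definite subspace of $T\otimes\mathbb{R}$; that is, $r$ is an involutive skew-autoisometry of $T$. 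This is the one point where the rank hypothesis is used: it makes $\operatorname{rk}T=3$, so Proposition~\ref{rankT=3} applies and $r=\pm t_a$ for a reflection $t_a$ with $a\in T$ primitive and satisfying \eqref{wdfta}; moreover $-\id$ preserves the positive sign structure of $T$ (as $\sigma_+T=2$ is even), so in either case $t_a$ itself reverses it, whence $a^2>0$.

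Next I would normalise the sign. If $r=t_a$, keep $g$; if $r=-t_a$, replace $g$ by $-g$, still an involution of $L$, now restricting to $t_a$ on $T$ and to $-\varphi$ on $M$. Either way we obtain an involution of $L$ restricting to $t_a$ on $T$ and to some $\varphi'\in O_{\pm h}(\tilde S_h)$ on $M$ (the ambiguity $h\mapsto-h$ being exactly the one anticipated in Remark~\ref{plusminus.h}), with $(\varphi')^2=\id$ and $\varphi'\oplus t_a$ extending to $L$. Passing to the chain $L\supset\widetilde{M}_a\oplus N'$, where $\widetilde{M}_a=(M\oplus\Z a)\otimes\Q\cap L$ and $N'=a^{\bot}\subset T$: the reflection $t_a$ acts identically on $N'$, so by Lemma~\ref{genlemma} the extended map must induce $\id$ on $\disc\widetilde{M}_a=\K^{\bot}/\K$, where $\K=\widetilde{M}_a/(M\oplus\Z a)$. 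Thus the pair $(\K,\varphi')$ satisfies conditions \eqref{tildeprimitive} and \eqref{inducesid}, and Lemma~\ref{main.lemma} hands us exactly the required data: $\K=\langle\kappa\oplus na/a^2\rangle$ with $n\in\{1,2\}$, $\operatorname{order}(\kappa)=a^2/n$, $\kappa^2=-n^2/a^2$, $\varphi'(\kappa)=-\kappa$, and $\varphi'\oplus t_a$ trivial on $\K^{\bot}/\K$. Finally $\tilde g_n(\tilde S_h,\kappa)\neq\emptyset$ is automatic in this direction: $N'$ is an honest sublattice of the honest lattice $L$ and it lies in this genus.

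I expect the sign-structure/reflection step to be the crux of the argument: it is what upgrades the one-sided Corollary~\ref{corlem1} to an equivalence, and it is the precise place where the rank hypothesis cannot be weakened, since for $\operatorname{rk}T\ge4$ an involutive skew-autoisometry of $T$ need no longer be $\pm$ a reflection and the reconstruction above would stall. Everything else is bookkeeping: tracking the $\pm\id_h$ ambiguity through Remark~\ref{plusminus.h}, checking $t_a|_{N'}=\id$ so that Lemma~\ref{genlemma} yields \eqref{inducesid}, and observing that the genus $\tilde g_n(\tilde S_h,\kappa)$ is realised inside the ambient lattice $L$ one started with.
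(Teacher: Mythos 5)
Your argument is essentially the intended one: the ``if'' direction is Corollary~\ref{corlem1}, and the ``only if'' direction combines Proposition~\ref{rankT=3} with Remark~\ref{plusminus.h} to reduce an arbitrary involutive skew-automorphism to one acting as a reflection $t_a$ on $T$, after which Lemma~\ref{main.lemma} (used as an equivalence) recovers the pair $(n,\kappa)$ and $\varphi$, and $N'=a^{\bot}\subset T$ witnesses $\tilde g_n(\tilde S_h,\kappa)\neq\emptyset$. The paper does not spell this proof out, and your write-up supplies correctly the two points that genuinely need checking: that $t_a$ acts identically on $N'$, so that Lemma~\ref{genlemma} forces condition~\eqref{inducesid} on $\disc\widetilde{M}_a$; and that replacing $g$ by $-g$ lands in $O_{\pm h}(\tilde S_h)$ rather than $O_h(\tilde S_h)$, which is exactly the ambiguity anticipated in Remark~\ref{plusminus.h}.

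One bookkeeping point needs attention. You assert that the rank hypothesis ``makes $\operatorname{rk}T=3$'', but from $\operatorname{rk}\tilde S_h=18$ one gets $\operatorname{rk}T=22-18=4$, where Proposition~\ref{rankT=3} does not apply and the reduction to reflections stalls --- as you yourself observe for $\operatorname{rk}T\ge4$. What you have actually proved is the corollary under the hypothesis $\operatorname{rk}T=3$, equivalently $\operatorname{rk}S=18$ and $\operatorname{rk}\tilde S_h=19$; this is precisely how the corollary is invoked later (``Since $\operatorname{rk}S_1=\operatorname{rk}S_2=18$, by Corollary~\ref{corlem2}\dots''), so the mismatch lies in the stated hypothesis rather than in your reasoning. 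Still, as written, the sentence deriving $\operatorname{rk}T=3$ is the one step that does not follow from the hypothesis you were given, and it is exactly the step on which the whole ``only if'' direction rests; you should flag the correction of the hypothesis explicitly rather than pass over it silently.
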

\begin{warning}
Corollary \ref{corlem1} and Corollary \ref{corlem2} do not say anything about any particular abstract homological type. However, typically those corollaries will be applied in the cases when the abstract homological type extending $\tilde{S}_h$ is unique.
\end{warning}
Now we consider $p$-primary components $\kappa_{[p]}$ of the vector $\kappa\in \disc M$ as in the conclusion of Lemma \ref{main.lemma}. The cyclic group $\langle\kappa\rangle$ generated by $\kappa$ decomposes into orthogonal sum $\langle\kappa\rangle=\bigoplus_p\langle\kappa_{[p]}\rangle$ of its $p$-primary components. Note that if $p$ is odd then the group $\langle\kappa_{[p]}\rangle$ is nondegenerate.
\begin{lemma}\label{kappaperb}
Given $n$ and $\kappa$ as in the conclusion of Lemma \ref{main.lemma}, if $p\neq 2$ or $n=1$ then $(\K^{\bot}_{[p]}/\K_{[p]})\cong\kappa_{[p]}^{\bot} $ where $\kappa_{[p]}^{\bot}$ is the orthogonal complement in $\disc_p M$.
\end{lemma}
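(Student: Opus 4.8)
The plan is to analyze the $p$-primary part of the kernel $\K = \langle\vartheta\rangle$ with $\vartheta = \kappa\oplus n\alpha$, and to show that, under the stated hypotheses, the cyclic summand $\langle n\alpha\rangle \subset \A_{[p]}$ contributes nothing obstructive, so that passing to $\K^\bot/\K$ in the $p$-component amounts to passing to the orthogonal complement of $\kappa_{[p]}$ inside $\disc_p M$. First I would invoke the orthogonal decomposition $\disc(M\oplus\Z a) = \disc M \oplus \A$ and its $p$-primary refinement $\langle\vartheta\rangle = \bigoplus_p \langle\vartheta_{[p]}\rangle$, where $\vartheta_{[p]} = \kappa_{[p]} \oplus (n\alpha)_{[p]}$. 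Since orthogonality and isotropy are checked $p$-component-wise, it suffices to prove $\K^\bot_{[p]}/\K_{[p]} \cong \kappa_{[p]}^\bot$ for a fixed $p$ with $p\neq 2$ or $n=1$.

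Next I would split into the two cases. When $p$ is odd, Lemma~\ref{main.lemma}(1) gives $\operatorname{order}(\kappa) = a^2/n$ and $\kappa^2 = -n^2/a^2$; writing $a^2 = n\cdot m$, the $p$-part $\langle\kappa_{[p]}\rangle$ is nondegenerate (as noted just before the lemma), and $(n\alpha)_{[p]}$ has the same order as $\kappa_{[p]}$, namely the $p$-part of $m$. The key point is that $\A_{[p]} = \langle\alpha_{[p]}\rangle$ is itself cyclic and nondegenerate, and $(n\alpha)_{[p]}$ generates a subgroup whose complement in $\A_{[p]}$ has order $n_{[p]}$; but for $p$ odd and $n\in\{1,2\}$ we have $n_{[p]} = 1$, so $(n\alpha)_{[p]}$ generates all of $\A_{[p]}$. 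Therefore $\vartheta_{[p]}$ is the graph of an anti-isometry between the nondegenerate forms $\langle\kappa_{[p]}\rangle$ and $\A_{[p]}$, which by Lemma~\ref{genlemma} means $\langle\vartheta_{[p]}\rangle^\bot / \langle\vartheta_{[p]}\rangle$ is the disc form of the primitive extension, equivalently the orthogonal complement $\kappa_{[p]}^\bot$ inside $\disc_p M$ (the $\A_{[p]}$-factor being entirely consumed). For the remaining case $p=2$ with $n=1$, we have $\A_{[2]} = \langle\alpha_{[2]}\rangle$ cyclic with $\vartheta_{[2]} = \kappa_{[2]}\oplus\alpha_{[2]}$, and $\operatorname{order}(\kappa_{[2]}) = \operatorname{order}(\alpha_{[2]})$ with $\kappa_{[2]}^2 = \alpha_{[2]}^2 = -1/a^2_{[2]}$; again $\vartheta_{[2]}$ is the graph of an anti-isometry onto all of $\A_{[2]}$, and the same argument via Lemma~\ref{genlemma} (or Proposition~\ref{L-K}) identifies $\K^\bot_{[2]}/\K_{[2]}$ with $\kappa_{[2]}^\bot$.

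I expect the main obstacle to be the bookkeeping in the $p=2$, $n=1$ subcase: one must check that the extension given by $\vartheta_{[2]}$ is genuinely primitive (so that $\alpha_{[2]}$ and the $\langle\kappa_{[2]}\rangle$-direction each embed and the kernel meets neither factor), and that Lemma~\ref{genlemma} applies verbatim even though $\disc_2 M$ may be odd. The excluded case $p=2$, $n=2$ is precisely the one where $2\alpha$ no longer generates $\A_{[2]}$ and the complement $\langle 2\alpha\rangle^\bot/\langle 2\alpha\rangle$ inside $\A_{[2]}$ is a nontrivial rank-jump phenomenon governed by Proposition~\ref{K2M2} rather than Proposition~\ref{KpMp}; keeping the hypothesis ``$p\neq 2$ or $n=1$'' sidesteps exactly this, so I would make explicit that the argument above breaks down there and no claim is made.
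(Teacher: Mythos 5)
Your proposal is correct and is essentially the paper's own argument in different clothing: both hinge on the observation that for $p$ odd or $n=1$ the component $(n\alpha)_{[p]}$ generates all of $\A_{[p]}$, so $|\K_{[p]}|=|\A_{[p]}|$ and the $\A$-factor is absorbed, after which the paper makes your phrase ``entirely consumed'' precise by checking that $\kappa_{[p]}^{\bot}\hookrightarrow\K^{\bot}_{[p]}/\K_{[p]}$ is injective (since $\K\cap\disc M=0$) and that both sides have order $|\disc_p M|/|\K_{[p]}|$. Only a minor slip: you want $\kappa_{[2]}^2=-\alpha_{[2]}^2$, not $\kappa_{[2]}^2=\alpha_{[2]}^2$, for $\vartheta_{[2]}$ to be the graph of an anti-isometry.
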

\begin{proof}
Since $\K\cap\disc S=0$, we have $\K_{[p]}\cap\kappa_{[p]}^{\bot}=0$. Hence the projection map from $\kappa_{[p]}^{\bot}$ to $\K^{\bot}_{[p]}/\K_{[p]}$ is injective. Note that $|\K^{\bot}_{[p]}||\K_{[p]}|=|\disc_p M\oplus \A_{[p]}|$ which implies $|\K^{\bot}_{[p]}/\K_{[p]}|=|\disc_p M||\A_{[p]}|/|\K_{[p]}|^2$. Since $|\A_{[p]}|=|\K_{[p]}|$ for $p\neq 2$ or $n=1$, we obtain $|\K^{\bot}_{[p]}/\K_{[p]}|=|\disc_p M|/|\K_{[p]}|$ . We also have $|\kappa_{[p]}||\kappa_{[p]}^{\bot}|=|\disc_p M|$. Since $|\kappa_{[p]}|=|\K_{[p]}|$, we get $|\kappa_{[p]}^{\bot}|=|\disc_p M|/|\K_{[p]}|$. It follows that $(\K^{\bot}_{[p]}/\K_{[p]})\cong\kappa_{[p]}^{\bot} $.
\end{proof}
\begin{corollary}
Let a pair $(n,\kappa)$  and $\varphi\in O_{\pm h}(\tilde{S}_h)$ be as in the conclusion of Lemma \ref{main.lemma} and assume $\tilde g_n(\tilde{S}_h,\kappa)\neq\emptyset$, and $p\neq 2$ or $n=1$. Then the involution $\varphi\oplus t_a$ extends to $L$ if and only if
\begin{enumerate}
\item $\varphi(\kappa)=-\kappa$ in $\disc \tilde{S}_h$,
\item $\varphi$ induces $\id$ on $\kappa_{[p]}^{\bot}$.
\end{enumerate}
\end{corollary}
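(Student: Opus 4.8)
The plan is to deduce the statement by combining Lemma~\ref{main.lemma} with Lemma~\ref{kappaperb} and reassembling the ambient $\L$ from its orthogonal pieces. Throughout, write $M=\tilde{S}_h$, $\A=\disc\Z a\cong[\frac{1}{a^2}]$ generated by $\alpha=a/a^2$, $\vartheta=\kappa\oplus n\alpha$, $\K=\langle\vartheta\rangle\subset\disc M\oplus\A$, and $\widetilde{M}_a=(M\oplus\Z a)\otimes\Q\cap L$, with orthogonal complement $N':=\widetilde{M}_a^\bot=a^\bot\cap N$ in $L$; recall $\disc\widetilde{M}_a=\K^\bot/\K$ and that $\tilde g_n(\tilde{S}_h,\kappa)$ is, by construction, the genus to which $N'$ must belong. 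I would split the proof into two assertions: (a) under the standing hypothesis $\tilde g_n\ne\emptyset$, the involution $\varphi\oplus t_a$ extends to $L$ if and only if it induces $\id$ on $\disc\widetilde{M}_a$; and (b) the latter condition is equivalent to the two listed conditions~(1) and~(2).

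For (a), first suppose $\varphi\oplus t_a$ extends to some $\Phi\in O(L)$. Since $\widetilde{M}_a$ is the primitive hull of $M\oplus\Z a$ inside $L$, the isometry $\Phi$ preserves both $\widetilde{M}_a$ and $N'$; and because $t_a$ fixes $a^\bot\supseteq N'$ pointwise, $\Phi$ restricts to the identity on $N'$, hence induces $\id$ on $\disc N'$. Applying Lemma~\ref{genlemma} to the unimodular finite-index extension $L\supset\widetilde{M}_a\oplus N'$, the restriction $\Phi|_{\widetilde{M}_a}$ is then forced to induce $\id$ on $\disc\widetilde{M}_a$ (and $N'\in\tilde g_n$, so the hypothesis $\tilde g_n\ne\emptyset$ is automatic in this direction). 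Conversely, if $\varphi\oplus t_a$ induces $\id$ on $\disc\widetilde{M}_a$ and $\tilde g_n\ne\emptyset$, choose $N'$ in that genus — it is indefinite, hence unique in it — and glue $\widetilde{M}_a$ and $N'$ along the graph of an anti-isometry $\disc\widetilde{M}_a\to\disc N'$ (which exists since $\disc N'\cong-\disc\widetilde{M}_a$ by the choice of genus). The result is an even unimodular lattice of signature $(3,19)$ by the defining conditions on $\tilde g_n$, hence a copy of $\L$; and since $\varphi\oplus t_a$ and $\id_{N'}$ both induce $\id$ on the respective discriminants, the pair extends over this gluing by Lemma~\ref{genlemma}, giving the desired extension of $\varphi\oplus t_a$ to $L$.

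For (b), Lemma~\ref{main.lemma} tells us that $(\K,\varphi)$ satisfies \eqref{tildeprimitive} and \eqref{inducesid} precisely when $\K$ has the stated cyclic shape (which is exactly the hypothesis that $(n,\kappa)$ is as in the conclusion of Lemma~\ref{main.lemma}), $\varphi(\kappa)=-\kappa$ — this is condition~(1) — and $\varphi\oplus t_a$ induces $\id$ on $\K^\bot/\K$. I would then match the last clause with condition~(2) prime by prime. Fix a prime $p$ with $p\ne2$ or $n=1$; by Lemma~\ref{kappaperb} the composite $\kappa_{[p]}^\bot\hookrightarrow\K^\bot_{[p]}\twoheadrightarrow\K^\bot_{[p]}/\K_{[p]}$ is an isomorphism. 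Assuming $\varphi(\kappa)=-\kappa$ and $\varphi^2=\id$, the subgroup $\kappa_{[p]}^\bot\subset\disc_p M$ is $\varphi$-invariant (if $x\cdot\kappa_{[p]}=0$ then $\varphi(x)\cdot\kappa_{[p]}=-\varphi(x)\cdot\varphi(\kappa_{[p]})=-x\cdot\kappa_{[p]}=0$), and, since $t_a$ acts trivially on $\disc_p M$, the induced action of $\varphi\oplus t_a$ on $\K^\bot_{[p]}/\K_{[p]}$ corresponds under this isomorphism to $\varphi|_{\kappa_{[p]}^\bot}$. Hence ``$\varphi\oplus t_a$ induces $\id$ on $\K^\bot/\K$'' is literally ``$\varphi$ induces $\id$ on $\kappa_{[p]}^\bot$'', which together with (a) proves the Corollary.

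The genuinely substantive points are the forced-triviality step in (a) — that an extension of $\varphi\oplus t_a$ to $L$ must act trivially on $\disc\widetilde{M}_a$, which rests entirely on $t_a$ fixing the complement $N'$ — and the verification in (b) that the abstract isomorphism of Lemma~\ref{kappaperb} intertwines the induced action of $\varphi\oplus t_a$ with $\varphi|_{\kappa_{[p]}^\bot}$; both amount to tracking which subgroups of the discriminant forms the maps in play preserve. The remaining ingredients — existence and uniqueness of $N'$, the reassembly into $\L$, and the extension criterion — are direct citations of Lemma~\ref{genlemma} and of the existence and uniqueness theorems recalled earlier, requiring no new computation.
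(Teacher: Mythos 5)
Your argument is correct and is precisely the route the paper intends: the corollary is stated without proof as an immediate combination of Lemma~\ref{main.lemma} (reducing extendability to the condition that $\varphi\oplus t_a$ induces $\id$ on $\K^{\bot}/\K$, via the gluing criterion of Lemma~\ref{genlemma} and the nonemptiness of $\tilde g_n$), with Lemma~\ref{kappaperb} identifying $\K^{\bot}_{[p]}/\K_{[p]}$ with $\kappa_{[p]}^{\bot}$ equivariantly at the primes allowed by the hypothesis. Your explicit verification that this identification intertwines the induced action of $\varphi\oplus t_a$ with $\varphi|_{\kappa_{[p]}^{\bot}}$, and that an extension to $L$ must act trivially on $\disc N'$ because $t_a$ fixes $a^{\bot}$ pointwise, fills in exactly the steps the paper leaves implicit.
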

\subsection{The embedding $M\oplus\Z a\hookrightarrow L$}\label{The.embedding}
In the previous section we discussed the conditions for the involution $\varphi\oplus t_a$ to extend to a unimodular primitive extensions $L\supset\widetilde{M}_a$, provided that the latter exists, \emph{i.e.} $\tilde{g}_n(M,\kappa)\neq\emptyset$. Now, we analyze this existence.

The isotropic subgroup $\K\subset\disc M\oplus\mathcal{A}$ given as in the conclusion of Lemma \ref{main.lemma} decomposes into orthogonal sum of  its $p$-primary components $\K_{[p]}\subset \disc_p M\oplus\mathcal{A}_{[p]}$ generated by $\kappa_{[p]}+n\alpha_{[p]}$ where $\alpha_{[p]}$ is a generator of $\mathcal{A}_{[p]}$ and $n=1$ or $2$.
\begin{lemma}\label{orth.summand}
Given a primitive extension $L\supset M$ and a pair $(n,\kappa)$ as in the conclusion of Lemma \ref{main.lemma}, if $p$ is odd or $n=1$, then the group generated by $\kappa_{[p]}$ is an orthogonal direct summand, i.e., $\disc M\cong\bar{\M}\oplus\langle\kappa_{[p]}\rangle$.
\end{lemma}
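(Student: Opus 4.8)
The plan is to deduce the statement from the elementary principle that a cyclic subgroup of a finite quadratic form on which the discriminant bilinear form restricts \emph{nondegenerately} is an orthogonal direct summand, and then to verify this nondegeneracy for the subform $\langle\kappa_{[p]}\rangle$ using only the numerical data supplied by Lemma~\ref{main.lemma}, namely $\operatorname{order}(\kappa)=a^2/n$, $\kappa^2=-n^2/a^2$ and $n\in\{1,2\}$. The hypothesis that $L\supset M$ is a primitive extension plays no role beyond guaranteeing that a pair $(n,\kappa)$ with these properties actually occurs; the conclusion is a purely arithmetic statement about $\disc M$ and $\kappa$. If $p\nmid a^2$ then $\langle\kappa_{[p]}\rangle=0$ and there is nothing to prove, so we may write $a^2=p^k m$ with $k\geq1$ and $p\nmid m$.

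First I would record the splitting principle. For any finite quadratic form $\mathcal{L}$ and any subgroup $\mathcal{C}\subset\mathcal{L}$ one has $\mathcal{C}\cap\mathcal{C}^{\bot}=\{x\in\mathcal{C}:x\cdot y=0\text{ for all }y\in\mathcal{C}\}$; if this intersection is trivial, comparing orders forces $\mathcal{L}=\mathcal{C}\oplus\mathcal{C}^{\bot}$. When $\mathcal{C}=\langle\kappa_{[p]}\rangle\cong\Z/p^k\Z$, triviality of $\mathcal{C}\cap\mathcal{C}^{\bot}$ is equivalent to $\kappa_{[p]}\cdot\kappa_{[p]}$ having \emph{exact} order $p^k$ in $\Q/\Z$, since $i\kappa_{[p]}\cdot j\kappa_{[p]}=ij(\kappa_{[p]}\cdot\kappa_{[p]})$ and the pairing $\Z/p^k\Z\times\Z/p^k\Z\to\tfrac{1}{p^k}\Z/\Z$, $(i,j)\mapsto iju/p^k$, is perfect precisely when $u$ is a $p$-adic unit. (Whether $x\cdot x$ equals $x^2$ or $-x^2$ modulo $\Z$ under the paper's sign conventions is immaterial here, as only the order of the element enters.)

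Next I would carry out the computation for $\mathcal{C}=\langle\kappa_{[p]}\rangle$. By Lemma~\ref{main.lemma}, $\operatorname{order}(\kappa)=a^2/n$; since $p$ is odd or $n=1$, in either case $p\nmid n$, so the $p$-part of $\operatorname{order}(\kappa)$ is $p^k$ and hence $\operatorname{order}(\kappa_{[p]})=p^k$. Using the orthogonal decomposition $\langle\kappa\rangle=\bigoplus_q\langle\kappa_{[q]}\rangle$ we get $\kappa\cdot\kappa=\sum_q(\kappa_{[q]}\cdot\kappa_{[q]})$ with each summand $q$-primary in $\Q/\Z$, so $\kappa_{[p]}\cdot\kappa_{[p]}$ is exactly the $p$-primary component of $\kappa\cdot\kappa$, which up to sign equals $n^2/a^2=n^2/(p^k m)\bmod\Z$. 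Put this fraction in lowest terms, $n^2/(p^k m)=c/(p^k r)$ with $\gcd(c,p^k r)=1$ and $p\nmid r$ (possible since $p\nmid n^2 m$); a B\'ezout relation $sp^k+tr=1$ gives $c/(p^k r)=cs/r+ct/p^k$, and $ct/p^k\bmod\Z$ has order exactly $p^k$ because $\gcd(ct,p)=1$. Hence $\kappa_{[p]}\cdot\kappa_{[p]}$ has order exactly $p^k=\operatorname{order}(\kappa_{[p]})$, so by the principle above $\langle\kappa_{[p]}\rangle$ is a nondegenerate orthogonal direct summand of $\disc M$, that is $\disc M\cong\bar{\M}\oplus\langle\kappa_{[p]}\rangle$ with $\bar{\M}=\langle\kappa_{[p]}\rangle^{\bot}$.

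The only point that deserves attention — and the source of the hypothesis ``$p$ odd or $n=1$'' — is the bookkeeping of powers of $p$ in $\kappa^2=-n^2/a^2$: the argument needs $p\nmid n^2$ so that the numerator does not consume part of the $p^k$ in the denominator. In the excluded case $p=2$, $n=2$ one has $n^2=4$, which shortens the $2$-primary part of $\kappa^2$ by two steps while $\operatorname{order}(\kappa_{[2]})$ drops only by one; then $2\kappa_{[2]}$ is a nonzero element of $\langle\kappa_{[2]}\rangle\cap\langle\kappa_{[2]}\rangle^{\bot}$, so $\langle\kappa_{[2]}\rangle$ is genuinely degenerate and need not split off, confirming that the hypothesis is sharp. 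This degenerate case is precisely what Proposition~\ref{K2M2} is designed to handle, which is why the two are kept separate. I expect no substantial obstacle beyond this arithmetic; everything else is routine.
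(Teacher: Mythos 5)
Your proposal is correct and follows essentially the same route as the paper: the paper's (very terse) proof also observes that, under the hypothesis ``$p$ odd or $n=1$'', the order of $\kappa_{[p]}$ equals the order of its square, so the cyclic form $\langle\kappa_{[p]}\rangle$ is nondegenerate and therefore splits off as an orthogonal direct summand. You have merely spelled out the two ingredients the paper leaves implicit (the splitting principle for nondegenerate subforms and the $p$-adic bookkeeping in $\kappa^2=-n^2/a^2$), and your closing remark on the failure at $p=2$, $n=2$ correctly identifies why that case is delegated to Proposition~\ref{K2M2}.
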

\begin{proof}
Let $p$ be an odd prime  or $n=1$, then we have  $\operatorname{order}(\kappa_{[p]})=\operatorname{order}(\kappa_{[p]}^2)$. Then the form generated by $\kappa_{[p]}$ is nondegenerate and hence an orthogonal direct summand in any form.
\end{proof}
\begin{corollary}\label{coronNiku}
 Given a primitive extension $L\supset M$ and a pair $(n,\kappa)$ as in the conclusion of Lemma \ref{main.lemma}, the hypotheses of Theorem \ref{th.N.existence} for the extension $L\supset M_a$ hold automatically for
  \begin{itemize}
    \item all odd primes $p\mathrel |a^2$;
    \item $p=2$ provided that $n=1$ and parity does not change, \emph{i.e.}, $\disc M$ and $\kappa^{\bot} \subset \disc M$ are either both even or both odd.
  \end{itemize}
\end{corollary}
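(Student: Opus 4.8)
The corollary concerns the primitive extension $\widetilde{M}_a\supset M_a=M\oplus\Z a$ inside $L\cong\L$; concretely, ``the hypotheses of Theorem~\ref{th.N.existence} for $L\supset M_a$'' means conditions (1)--(3) of that theorem applied to $S=\widetilde{M}_a$, whose discriminant form is $\disc\widetilde{M}_a=\K^{\bot}/\K$, of rank $\operatorname{rk}M+1$ and signature $(\sigma_+M+1,\sigma_-M)$ (the ``$+1$'' because $a^2>0$); in particular $22-\operatorname{rk}\widetilde{M}_a=21-\operatorname{rk}M$. The plan is to transport the per-prime conditions (2) and (3) from $M$ --- for which they hold, $L\supset M$ being a primitive extension --- to $\widetilde{M}_a$, prime by prime among those allowed in the statement, thereby showing that these particular hypotheses of Nikulin's existence theorem (which decides whether $\widetilde{M}_a$ embeds primitively, equivalently whether $\tilde g_n(M,\kappa)\neq\emptyset$) need no checking.

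First I would fix such a prime $p$ (any odd $p\mid a^2$, or $p=2$ with $n=1$) and record the local picture. By Lemma~\ref{orth.summand} the cyclic group $\langle\kappa_{[p]}\rangle$ splits off orthogonally, $\disc_p M\cong\kappa_{[p]}^{\bot}\oplus\langle\kappa_{[p]}\rangle$, while by Lemma~\ref{kappaperb} one has $\disc_p\widetilde{M}_a=\K_{[p]}^{\bot}/\K_{[p]}\cong\kappa_{[p]}^{\bot}$. Two consequences: (i) $\ell_p(\disc\widetilde{M}_a)=\ell_p(\disc M)-1$ (the summand $\langle\kappa_{[p]}\rangle$ is cyclic and nontrivial), so the triggering equality $\ell_p(\disc\widetilde{M}_a)=22-\operatorname{rk}\widetilde{M}_a$ for clause (2) or (3) on $\widetilde{M}_a$ is \emph{equivalent} to $\ell_p(\disc M)=22-\operatorname{rk}M$, the triggering equality for the same clause on $M$; (ii) for $p=2$, $(\disc\widetilde{M}_a)_2=\kappa_{[2]}^{\bot}$, which by the parity hypothesis is even exactly when $\disc M$ is, hence exactly when clause (3) is in force for $M$ --- and if $(\disc\widetilde{M}_a)_2$ is odd there is nothing to check. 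Thus whenever a per-prime condition of Theorem~\ref{th.N.existence} at $p$ is active for $\widetilde{M}_a$, the corresponding condition at $p$ is active, and holds, for $M$.

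Next comes the arithmetic comparison. The kernel $\K$ is cyclic of order $a^2/n$, so $|\disc\widetilde{M}_a|=|\disc M|\,|\A|/|\K|^2=|\disc M|\,n^2/a^2$, and from the orthogonal splitting above $\det_p(\disc\widetilde{M}_a)=\det_p(\disc M)/\det_p(\langle\kappa_{[p]}\rangle)$. Using that $\kappa_{[p]}$ has order the $p$-part of $a^2$ (recall $n$ is a $p$-unit in all cases at hand) together with $\kappa^2=-n^2/a^2$ from Lemma~\ref{main.lemma}(1), a short computation of the cyclic form $\langle\kappa_{[p]}\rangle$ gives $a^2\det_p(\langle\kappa_{[p]}\rangle)\equiv-1\pmod{(\Z_p^{\times})^2}$ --- the $-1$ coming from the sign in $\kappa^2=-n^2/a^2$, the remaining factors being squares. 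Hence
\[
|\disc\widetilde{M}_a|\det_p(\disc\widetilde{M}_a)=\bigl(|\disc M|\det_p(\disc M)\bigr)\cdot\frac{n^2}{a^2\det_p(\langle\kappa_{[p]}\rangle)}\equiv-\bigl(|\disc M|\det_p(\disc M)\bigr)\pmod{(\Z_p^{\times})^2}.
\]
For odd $p$ the right-hand side equals $-(-1)^{\sigma_+M-1}=(-1)^{\sigma_+M}=(-1)^{\sigma_+\widetilde{M}_a-1}$, which is precisely condition (2) of Theorem~\ref{th.N.existence} for $\widetilde{M}_a$; for $p=2$ (even case, $n=1$) it equals $-(\pm1)=\pm1$, which is condition (3). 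Combined with the previous paragraph, this proves the corollary.

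The main obstacle is the last computation: pinning down the ``essential part'' of $\det_p(\langle\kappa_{[p]}\rangle)$ modulo $(\Z_p^{\times})^2$, and, at $p=2$, checking that the hypothesis ``parity does not change'' really keeps us in the even regime where the comparison is over $(\Z_2^{\times})^2$ rather than over $(\Z_2^{\times})^2\times\{1,5\}$ (see Definition~\ref{detp} and Remark~\ref{detcongruence}), so that the correction factor $n^2/(a^2\det_p(\langle\kappa_{[p]}\rangle))$ is exactly $-1$. It is this $-1$ that matches the increment $\sigma_+\widetilde{M}_a=\sigma_+M+1$ forced by $a^2>0$; any other value would break the transport.
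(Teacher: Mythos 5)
Your proof is correct and is essentially the argument the paper intends but leaves unwritten: the corollary is stated without proof as an immediate consequence of Lemma~\ref{orth.summand} and Lemma~\ref{kappaperb} combined with Theorem~\ref{th.N.existence}, and your prime-by-prime transfer (the length drops by exactly one, so the triggering equalities for $M$ and $\widetilde{M}_a$ coincide, while the determinant picks up the factor $-1$ matching $\sigma_+\widetilde{M}_a=\sigma_+M+1$) supplies precisely the computation the paper omits. The one step you assert rather than carry out, namely $a^2\det_p\langle\kappa_{[p]}\rangle\equiv-1$, does check out: the relation $\kappa^2=-n^2/a^2$ in $\Q/2\Z$ pins the unit of $\kappa_{[p]}^2$ down modulo $p^{j_p}$ for odd $p$ (enough, since $1+p\Z_p\subset(\Z_p^\times)^2$) and modulo $2^{k+1}$ for $p=2$, where the evenness forced by the parity hypothesis gives $k\geq2$ and hence a congruence modulo $8$, which determines the square class.
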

\subsection{The $2$-primary part}
By Lemma~\ref{orth.summand}, the only nontrivial case (\emph{i.e.}, $\langle\kappa_{[p]}\rangle$ is not an orthogonal summand) is when $p=2$ and $n=2$.
From now on, to avoid more than one subscript, we often abbreviate $\M:=\disc_2 M$, $\K:=\K_{[2]}$, $\kappa:=\kappa_{[2]}$ and $\alpha:=\alpha_{[2]}$ for the corresponding $2$-primary parts. In this notation, we have $\operatorname{ord}(\alpha)=2^{m+1}$, $\alpha^2=\frac{\delta}{2^{m+1}}$ for some positive integer $m$, and, hence,
\begin{align}\label{kappasquare}
    \text{$\operatorname{ord}(\kappa)=2^m$, $\kappa^2=\frac{\xi}{2^{m-1}}$, $\xi$ is odd}.
\end{align}
By applying Lemma \ref{sequences} to the $2$-subgroup $\mathcal{C}=\langle\kappa\rangle$ of $\M=\disc_2 M$, we arrive at the decomposition $\M\cong\bar{\M}\oplus \bigoplus \mathcal{N}_s$.
\begin{observation}\label{observation1}
Recall that $\M=\bigoplus\M_i$, where $\M_i$ is the homogenous group of exponent $2^i$. For any $\sigma_i'\in 2^r\M_i$, we have
\begin{equation}\label{observation1.1}
    (\sigma'_i)^2=\frac{\mu'}{2^{i-2r}},\, \mu'\in\Z.
\end{equation}
Equivalently, for any $\sigma\in 2^{r}\M$ such that $\ord(\sigma)\leq 2^d$, we have
\begin{equation}\label{observation1.2}
    (\sigma)^2=\frac{\mu}{2^{d-r}},\,\mu\in\Z.
\end{equation}
\end{observation}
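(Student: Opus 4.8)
The plan is to reduce both identities to the classification of finite quadratic forms on $2$-groups (Nikulin~\cite{Niku2}). First I would fix the orthogonal homogeneous decomposition $\M=\bigoplus_i\M_i$ already used in the proof of Lemma~\ref{sequences} (it comes from the \emph{partial normal form}, Lemma 4.2 in~\cite{MM3}): each $\M_i$ is an orthogonal direct sum of the standard pieces $[\frac{m}{2^i}]$ with $m$ odd, $\mathcal{U}(2^i)$ and $\mathcal{V}(2^i)$, and $\M_i\cong(\Z/2^i)^{t_i}$ for some $t_i\geq 0$ as a group.

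The real content is an elementary bound: for $x\in\M_i$ with $\ord(x)\mid 2^j$ (hence $j\leq i$) one has $x^2\in\frac{1}{2^j}\Z\bmod 2\Z$. I would check this on the standard summands --- on $[\frac{m}{2^i}]$ with generator $\beta$ an element $x$ of order dividing $2^j$ is $x=a\beta$ with $2^{i-j}\mid a$, so $x^2=a^2 m/2^i$ has the required denominator; on $\mathcal{U}(2^i)$ and $\mathcal{V}(2^i)$ the form already takes values in $\frac{1}{2^{i-1}}\Z\bmod 2\Z$ and the same ``an element of order $2^j$ is $2^{i-j}$ times an element'' trick applies --- and orthogonality of the decomposition then propagates the bound to all of $\M_i$ and to sums of such elements. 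Granting this, \eqref{observation1.1} is immediate: if $\sigma'_i=2^r\tau$ with $\tau\in\M_i$, then $\ord(\tau)\mid 2^i$, so $\tau^2=\mu'/2^i\bmod 2\Z$ and $(\sigma'_i)^2=2^{2r}\tau^2=\mu'/2^{i-2r}\bmod 2\Z$ by $q(nx)=n^2q(x)$; and \eqref{observation1.2} follows the same way, since writing $\sigma=2^r\tau$ and using $\ord(\sigma)\leq 2^d$ forces $2^{d+r}\tau=2^d\sigma=0$, hence $\ord(\tau)\mid 2^{d+r}$, hence $\tau^2=\mu/2^{d+r}\bmod 2\Z$ (by the bound, applied to the homogeneous components of $\tau$) and $\sigma^2=2^{2r}\tau^2=\mu/2^{d-r}\bmod 2\Z$. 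The ``equivalently'' is just the remark that decomposing $\sigma=\bigoplus_i\sigma_i$ with $\sigma_i\in 2^r\M_i$ and $\ord(\sigma_i)\leq 2^d$ and applying \eqref{observation1.1} componentwise reproduces \eqref{observation1.2}.

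I do not expect a genuine obstacle here: once the problem is pushed down to the standard summands, everything is bookkeeping with powers of $2$. The only point that deserves care is that the homogeneous decomposition $\M=\bigoplus_i\M_i$ must be taken \emph{orthogonal} --- the canonical group-theoretic one need not be --- so that $q$ is additive across the $\M_i$; but this costs nothing, since the required orthogonal decomposition is already available from the proof of Lemma~\ref{sequences}.
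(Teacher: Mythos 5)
Your argument is correct; the paper states this as an Observation with no proof, and your reduction to the partial normal form (values of $q$ on $\M_i$ lie in $\frac{1}{2^i}\Z\bmod 2\Z$ on each standard summand $[\frac{m}{2^i}]$, $\mathcal{U}(2^i)$, $\mathcal{V}(2^i)$, combined with $q(2^r\tau)=2^{2r}q(\tau)$ and orthogonality) is exactly the bookkeeping the author leaves implicit. One small caveat on your closing ``equivalently'' remark: applying \eqref{observation1.1} componentwise with the exponent $r$ alone yields denominator $2^{i-2r}$, which exceeds $2^{d-r}$ when $i>d+r$, so one must also use that $\ord(\sigma_i)\le 2^d$ forces $\sigma_i\in 2^{i-d}\M_i$ for such $i$; your direct derivation of \eqref{observation1.2} via $\sigma=2^r\tau$ with $\ord(\tau)\mid 2^{d+r}$ sidesteps this and is complete as written.
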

\begin{observation}\label{observation2}
In view of \eqref{kappasquare}, only the following three homogenous components contribute to $\kappa^2$:
$\kappa'_{m-1}$, $\kappa'_{m}$ and $\kappa'_{m+1}=2\bar{u}_{m+1}$ where $\kappa'_{m-1}\in\M_{m-1}$ and $\bar{u}_{m+1}\in\M_{m+1}$ are orthogonal direct summands whereas $\kappa'_{m}$ is \emph{not}: $\ord(\kappa'_m)=2^m$ and $(\kappa')^2=\xi'/2^{m-1}$.
\end{observation}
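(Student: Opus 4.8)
The plan is to prove the statement as a denominator count in the orthogonal homogeneous decomposition $\M=\bigoplus_i\M_i$ recalled in Observation~\ref{observation1}. Write $\kappa=\bigoplus_i\kappa'_i$ with $\kappa'_i\in\M_i$; since the $\M_i$ are mutually orthogonal, $\kappa^2=\sum_i(\kappa'_i)^2$, so it suffices to control the denominator of each summand. For a nonzero $\kappa'_i$ let $r_i\ge0$ be maximal with $\kappa'_i\in2^{r_i}\M_i$ and write $\kappa'_i=2^{r_i}u_i$, $u_i$ of maximal order $2^i$ in $\M_i$; then $\ord(\kappa'_i)=2^{i-r_i}$ divides $\ord(\kappa)=2^m$, whence $r_i\ge\max(0,i-m)$.

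Next I would bound $(\kappa'_i)^2$. Since the bilinear form on the nondegenerate homogeneous group $\M_i$ takes values in $\tfrac{1}{2^i}\Z/\Z$, Observation~\ref{observation1} gives $u_i^2=\mu_i/2^i$ for some $\mu_i\in\Z$, so $(\kappa'_i)^2=\mu_i/2^{i-2r_i}$ has denominator at most $2^{i-2r_i}$. Feeding in $r_i\ge\max(0,i-m)$: for $i\le m-2$ this is $\le2^{m-2}$; for $i\ge m+2$ it is $\le2^{2m-i}\le2^{m-2}$; and for $i=m-1,m,m+1$ it is $\le2^{m-1}$, $\le2^{m}$, $\le2^{m-1}$ respectively (using $r_{m+1}\ge1$). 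Thus the summands with $i\notin\{m-1,m,m+1\}$ all have denominator $\le2^{m-2}$ and do not affect whether $\kappa^2$ has denominator $2^{m-1}$; this is precisely the assertion that only $\kappa'_{m-1}$, $\kappa'_m$, $\kappa'_{m+1}$ contribute to $\kappa^2$. Moreover $(\kappa'_m)^2$ cannot have denominator $2^m$: by the estimates it is the unique summand of $\kappa^2$ that could, so were it to, $\kappa^2$ would have denominator $2^m$, contradicting \eqref{kappasquare}; hence $(\kappa'_m)^2=\xi'/2^{m-1}$ for some $\xi'\in\Z$, and a contributing $\kappa'_m$ necessarily has $r_m=0$, i.e.\ $\ord(\kappa'_m)=2^m$.

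Finally I would read off the summand structure. A contributing $\kappa'_{m-1}$ has $r_{m-1}=0$, so it generates a cyclic form $[\mu_{m-1}/2^{m-1}]$ with $\mu_{m-1}$ odd; this form is nondegenerate, hence $\langle\kappa'_{m-1}\rangle$ splits off $\M$ as an orthogonal direct summand. Likewise $\kappa'_{m+1}=2\bar u_{m+1}$ with $\bar u_{m+1}$ of order $2^{m+1}$ and $\bar u_{m+1}^2=\mu_{m+1}/2^{m+1}$, $\mu_{m+1}$ odd, so $\langle\bar u_{m+1}\rangle$ is an orthogonal direct summand. By contrast $\kappa'_m$ generates $\Z/2^m$ on which the induced bilinear form has value $(\kappa'_m)^2\bmod\Z$ of denominator only $2^{m-1}$; thus $2^{m-1}\kappa'_m$ pairs trivially with all of $\langle\kappa'_m\rangle$, the restricted form is degenerate, and $\langle\kappa'_m\rangle$ is \emph{not} an orthogonal direct summand.

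The only delicate points are the bookkeeping in the second paragraph --- keeping the order bound $i-r_i\le m$ and the divisibility exponent $r_i$ in step so that precisely the indices $m-1,m,m+1$ survive --- and the ``no cancellation at denominator $2^m$'' step for $\kappa'_m$, which is the one place the hypothesis that $\xi$ is odd (equivalently, that $\kappa^2$ has denominator exactly $2^{m-1}$, not $2^m$) is genuinely used. Degenerate configurations in which one of the three components vanishes need no separate treatment; such a component is simply absent from the list.
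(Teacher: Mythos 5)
Your argument is correct and is essentially the justification the paper intends: the statement is left as an unproved ``Observation'' whose content is exactly the denominator count you carry out, namely applying Observation~\ref{observation1} to each homogeneous component $\kappa'_i=2^{r_i}u_i$ with $r_i\ge\max(0,i-m)$, so that only $i=m-1,m,m+1$ can reach denominator $2^{m-1}$ and only $i=m$ could reach $2^m$, which the oddness of $\xi$ in \eqref{kappasquare} forbids. Your identification of which of the three components split off as nondegenerate cyclic summands (and why $\langle\kappa'_m\rangle$ does not, its restricted bilinear form being degenerate) matches the statement as well.
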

For $r_1$ as in \eqref{r1}, by Observation \ref{observation1}, we have $\kappa^2=\bar{\kappa}^2_1=\bar{\xi}/2^{m-r_1}$, $\bar{\xi}\in \mathbb{Z}$, and hence,  either $r_1=0$ or $r_1=1$ by \eqref{kappasquare}.

\smallskip
\noindent\textbf{{The case $r_1=0$}}: Following the construction in Lemma \ref{sequences}, let
\begin{equation*}
    n=\operatorname{max}\{i:\operatorname{ord}(\kappa'_i)=2^{i} \}\leq m\mbox{ and } m_1=n=\operatorname{log}_2\operatorname{ord}(\kappa_n').
\end{equation*}
We have $2^0u_1=\kappa_1=\Sigma_{\ord(\kappa'_i)\leq2^n}\kappa'_i$. We consider three cases: $m_1=m$ or $m_1= m-1$ or $m_1\leq m-2$.

\smallskip
$\bullet$\textit{ The case $n:=m_1=m$}:
Then we have $\kappa=\bar{\kappa}_1=\kappa_1=u_1$ (see \eqref{kappa1}) and the process terminates: $\M=\bar{\M}\oplus\N_1$. By \eqref{kappasquare}, we have $\ell(\N_1)=2$, \emph{i.e.},
\begin{equation}\label{N1.1}
     \M\cong\bar{\M}\oplus\frac{1}{2^{m}}\left[ \begin{array}{cc}
                          \mu_1 & 1 \\
                           1 &  \nu_1 \\
                           \end{array}
                    \right],\,\mu_1\in2\mathbb{Z};\quad\kappa_{[2]}=u_1.
\end{equation}

$\bullet$ \textit{The case $n:=m_1=m-1$}: Then we have $\kappa_1=u_1$, $\operatorname{ord}(\kappa_1)=2^{m-1}$ and $\kappa_1^2=\mu_1/2^{m-1}$.

Consider $\bar{\kappa}_2=\bar{\kappa}_1-\kappa_1$. By construction, $\bar{\kappa}_2=\bigoplus_i\kappa'_i$ where $\kappa_i'\in\M_{i}$ with $i>m-1$ for all $i$. We have $r_2\geq 1$, and the inequality $m_1=m-1<m_2\leq m$ implies that $m_2=m$. Hence $\kappa_2=\bar{\kappa}_2$ (see \eqref{kappa1}) and the algorithm terminates: $\kappa=\kappa_1\oplus\kappa_2$.
It follows that
 $$\kappa_1^2+\kappa_2^2=\frac{\mu_1}{2^{m-1}}+ \frac{\mu_2}{2^{m-r_2}}$$
and, essentially by Observation \eqref{observation2}, we have the following possibilities:

either $r_2=1$, $\mu_1$ is odd and $\mu_2$ is even; then
 \begin{equation}\label{N1N2.1}
        \M\cong\bar{\M}\oplus\frac{1}{2^{m-1}}\left[
       \begin{array}{c}
         \mu_1\\
       \end{array}
     \right]\oplus\frac{1}{2^{m+1}}\left[ \begin{array}{cc}
                          \mu_2 & 1 \\
                           1 &  \nu_2 \\
                           \end{array}
                    \right];\quad\kappa_{[2]}=u_1\oplus2u_2,
\end{equation}

 or $r_2=1$, $\mu_1$ is even and $\mu_2$ is odd; then
  \begin{equation}\label{N1N2.2}
    \M\cong\bar{\M}\oplus\frac{1}{2^{m-1}}\left[ \begin{array}{cc}
                          \mu_1 & 1 \\
                           1 &  \nu_1 \\
                           \end{array}
                    \right]\oplus\frac{1}{2^{m+1}}\left[
       \begin{array}{c}
         \mu_2\\
       \end{array}
     \right];\quad\kappa_{[2]}=u_1\oplus2u_2,
  \end{equation}

or $r_2>1$, then $\mu_1$ is odd and $\mu_2$ can be odd or even; then
\begin{equation}\label{N1N2.3}
    \M\cong\bar{\M}\oplus\frac{1}{2^{m-1}}\left[
       \begin{array}{c}
         \mu_1\\
       \end{array}
     \right]\oplus\N_2;\quad\kappa_{[2]}=u_1\oplus2^{r_2}u_2,
\end{equation}
where $\N_2$ is either
\begin{equation}\label{N1N2.4}
    \frac{1}{2^{m+r_2}}\left[
       \begin{array}{c}
         \mu_2\\
       \end{array}
     \right]\quad\mbox{or}\quad\frac{1}{2^{m+r_2}}\left[ \begin{array}{cc}
                          \mu_2 & 1 \\
                           1 &  \nu_2 \\
                           \end{array}
                    \right].
\end{equation}

$\bullet$\textit{ The case $n:=m_1\leq m-2$}: By Observation \ref{observation2}, at the next step we have $r_2=1$ and $n_2\geq m+1$, hence again $m_2=m$, and the algorithm terminates: $\kappa=\kappa_1\oplus\kappa_2$, hence we have
 $$\kappa_1^2+\kappa_2^2=\frac{\mu_1}{2^{m-2}}+ \frac{\mu_2}{2^{m-1}}$$
and by Observation \ref{observation2} again, $\mu_2$ is odd and $\mu_1$ can be odd or even. Then, we obtain
\begin{equation}\label{N1N2.5}
    \M\cong\bar{\M}\oplus\N_1\oplus\frac{1}{2^{m+1}}\left[
       \begin{array}{c}
         \mu_2\\
       \end{array}
     \right]\quad\kappa_{[2]}=u_1\oplus2u_2
\end{equation}
where $\N_1$ is either
\begin{equation}\label{N1N2.6}
    \frac{1}{2^{n}}\left[
       \begin{array}{c}
         \mu_1\\
       \end{array}
     \right], \, n\leq m-2\quad\mbox{or}\quad\frac{1}{2^{n}}\left[ \begin{array}{cc}
                          \mu_1 & 1 \\
                           1 &  \nu_1 \\
                           \end{array}
                    \right],\, n\leq m-2.
\end{equation}
\noindent\textbf{The case $r_1=1$}: Then we have
\begin{equation*}
    n=\operatorname{max}\{i:\operatorname{ord}(\kappa'_i)=2^{i-1} \}\leq m+1.
\end{equation*}
Since $n>m$, by Observation \ref{observation2}, we conclude that $n=m+1$ and $m_1=m$, \emph{i.e.}, the algorithm terminates at the first step and we arrive at
\begin{equation}\label{N1N2.7}
     \M\cong\bar{\M}\oplus\frac{1}{2^{m+1}}\left[ \begin{array}{c}
                          \mu_1
                           \end{array}
                    \right];\quad\kappa_{[2]}=2u_1
\end{equation}

\subsection{The group $\K^{\bot}/\K$}\label{description.KperbmodK}
For the decompositions $\M\cong\bar{\M}\oplus \bigoplus_{s=1}^N \mathcal{N}_s$ given above of length $N\leq2$, the corresponding groups $\K^{\bot}/\K$ which are the orthogonal direct sum of $\bar{\M}$ and a subgroup generated by certain elements $\{w_i\}$ are described below (in terms of the notation given in Lemma \ref{sequences}) on a case by case basis. We also indicate
the ``\emph{ambiguous}" cases where $\M$ is odd and $\K^{\bot}/\K$ is even (when describing the \emph{ambiguous} cases, we assume $\bar{\M}$ is even since otherwise both $\M$ and $\K^{\bot}/\K$ are odd forms ):

The case $r_1=0$ :

\begin{itemize}
\item In case \eqref{N1.1}, $\K=\langle u_1+2\alpha\rangle$ and $\K^{\bot}/\K\cong\bar{\M}\oplus\mathbb{Z}/2^{m+1}$, generated by $w_1=\alpha-\delta v_1$. This case is \emph{ambiguous} if $m=1$ and $\nu_1$ is odd.

\item In case \eqref{N1N2.1}, $\K=\langle u_1+2u_2+2\alpha\rangle$ and $\K^{\bot}/\K\cong\bar{\M}\oplus\mathbb{Z}/2^{m+1}\oplus\mathbb{Z}/2^{m+1}$, generated by $w_1=\delta v_2-\alpha$ and $w_2=\delta u_2-\mu_2\alpha$.  This case is \emph{ambiguous} if $m=2$.

\item In case \eqref{N1N2.2}, $\K=\langle u_1+2u_2+2\alpha\rangle$ and $\K^{\bot}/\K\cong\bar{\M}\oplus\mathbb{Z}/2^{m}\oplus\mathbb{Z}/2^{m}$, generated by $w_1=v_1-\frac{2}{\mu_2}u_2$ and $w_2=\frac{\mu_1}{2}v_1+u_2+\alpha$. This case is \emph{ambiguous} if $m=1$ and $\nu_1$ is odd.

 \item In case \eqref{N1N2.3} with the former case of \eqref{N1N2.4},  $\K=\langle u_1+2^{r_2}u_2+2\alpha\rangle$ and $\K^{\bot}/\K\cong\bar{\M}\oplus\mathbb{Z}/2^{m+r_2}$, generated by $w_1=\delta u_2-\mu_2\alpha$.  This case is \emph{ambiguous} if $m=2$.

  \item In case \eqref{N1N2.3} with the latter case of \eqref{N1N2.4}, $\K=\langle u_1+2^{r_2}u_2+2\alpha\rangle$ and $\K^{\bot}/\K\cong\bar{\M}\oplus\mathbb{Z}/2^{m+r_2}\oplus\mathbb{Z}/2^{m+r_2}$, generated by $w_1=\delta v_2-\alpha$ and $w_2=\delta u_2-\mu_2\alpha$. This case is \emph{ambiguous} if $m=2$.
  \item In case \eqref{N1N2.5} with the former case of \eqref{N1N2.6}, $\K=\langle u_1+2u_2+2\alpha\rangle$ and $\K^{\bot}/\K\cong\bar{\M}\oplus\mathbb{Z}/2^{n+2}$, generated by $w_1=-\delta u_2+\mu_2\alpha$.  This case is \emph{ambiguous} if $m\geq3$ and $n=1$.
  \item In case \eqref{N1N2.5} with the latter case of \eqref{N1N2.6}, $\K=\langle u_1+2u_2+2\alpha\rangle$ and $\K^{\bot}/\K\cong\bar{\M}\oplus\mathbb{Z}/2^{n+1}\oplus\mathbb{Z}/2^{n+1}$, generated by $w_1=\mu_2v_1-2^{m-n}u_2$ and $w_2=\frac{\mu_1}{2}v_1+u_2+\alpha$.  This case is \emph{ambiguous} if $n=1$ and $\nu_1$ is odd.
\end{itemize}
The case $r_1=1$:
\begin{itemize}
\item   In case \eqref{N1N2.7}, $\K=\langle 2u_1+2\alpha\rangle$ and $\K^{\bot}/\K\cong\bar{\M}\oplus\mathbb{Z}/2\oplus\mathbb{Z}/2$, generated by $w_1=u_1+\alpha$ and $w_2=2^mu_1$.
\end{itemize}

\section{Applications}\label{Applications}
\subsection{Simple Quartics}\label{simple.quartics}
In this section we consider birational projective models $f_h\colon X\rightarrow \mathbb{P}^3$ with $h^2=4$, \emph{i.e.}, spatial model.
The image is a quartic surface in $\mathbb{P}^3$.
For a simple quartic $X$, the minimal resolution of singularities $\tilde{X}$ is a smooth $K3$-surface; hence the intersection lattice is of the form $H_2(\tilde{X})\cong \L$.
\begin{definition}
A quartic $X$ is called \emph{nonspecial} if the abstract homological type $\mathcal{H}(S\oplus \Z h\subset L)$ associated to the projective model $f_h\colon X\rightarrow \mathbb{P}^3$ is primitive, \emph{i.e}, $S_h\subset L$ is a primitive extension.
\end{definition}
For a given set of simple singularities $S$, the corresponding equisingular stratum of quartics is denoted by $\mathcal{M}(S)$. Our primary interest is the family $\mathcal{M}_1(S)\subset \mathcal{M(S)}$ constituted by the nonspecial quartics with the set of singularities $S$.

A complete description of the strata $\mathcal{M}_1(S)$ of nonspecial simple quartics is given by G\"{u}ne\c{s} Akta\c{s} \cite{Cisem1}, where it is proved that
the strata  $\mathcal{M}_1 (S)$ with $S=\mathbf{D}_6\oplus2\mathbf{A}_6$, $\mathbf{D}_5\oplus2\mathbf{A}_6\oplus \mathbf{A}_1$, $2\mathbf{A}_7\oplus 2\mathbf{A}_2$, $3\mathbf{A}_6$ or $2\mathbf{A}_6\oplus2\mathbf{A}_3$ split into pairs of complex conjugate components; all other non-maximizing equisingular strata are connected (i.e., they consist of one real component). The classification of the connected components of the $59$ maximizing strata is also available there.  As one of the main applications of this paper, we give the proof of Theorem~\ref{principal.result}.
\begin{proof}[Proof of Theorem~\ref{principal.result}]
By  Theorem \ref{real.model}, the question reduces to finding an involutive skew-automorphism of the primitive abstract homological type $\mathcal{H}=(S\oplus \Z h\subset L)$ extending the root lattice $S$.
\begin{remark}\label{sym}
Since the homological type is primitive we have $\tilde{S}_h=S_h$, $\disc \tilde{S}_h=\disc S\oplus [ \frac{1}{4}]$ and $O_h(\tilde{S}_h)=O(S)$. Furthermore, since we are interested in the induced action on discriminant, the group $O_{\pm h}(\tilde{S}_h)$ can be replaced with $\operatorname{Sym}(\Gamma)\times\{\pm \id_{h}\}$, where $\Gamma$ is the Dynking diagram of the root lattice $S$ (\emph{cf}. section \ref{root.lattices})
\end{remark}
If $\operatorname{rk} S=19$, the statement of the theorem is given by Proposition \ref{rankT=2}. Hence, throughout the rest of the proof we assume $\operatorname{rk} S\leq18$.

By computer aided computations, it is easily confirmed that most of the abstract homological types $\mathcal{H}=(S\oplus\Z h\subset \L )$ (except $100$ of them) with $\operatorname{rk} S\leq18$ are symmetric $\operatorname{c}$-invariant perturbations of the $37$ maximizing primitive homological types (see, \cite{Cisem1}) realized by a real quartic where $c$ is a real structure on the corresponding real surface. Then, due to Proposition \ref{real.pert}, these abstract homological types are also realized by a real nonspecial quartic.

The space $\mathcal{M}_1 (S)$ with $S=\mathbf{D}_6\oplus2\mathbf{A}_6$, $\mathbf{D}_5\oplus2\mathbf{A}_6\oplus \mathbf{A}_1$, $2\mathbf{A}_7\oplus 2\mathbf{A}_2$, $3\mathbf{A}_6$, $2\mathbf{A}_6\oplus2\mathbf{A}_3$ consists of two complex conjugate components (see, \cite{Cisem1}). Therefore the strata $\mathcal{M}_1(S)$ do not contain a real surface.

For each of the remaining $95$ sets of singularities $S$, we used GAP~\cite{GAP} to find a positive integer $a^2\mathrel| 2\operatorname{exp}(\disc \tilde{S}_h)$, integer $n=1$ or $2$, class $\kappa\in \disc \tilde{S}_h$ and involution $\varphi\in O_{\pm h}(\tilde{S}_h)$ satisfying the conditions in Lemma \ref{main.lemma} and such that $\tilde{g}_n(\tilde{S}_h, \kappa)\neq\emptyset$. If found, Corollary \ref{corlem1} and Theorem \ref{real.model} imply that the stratum $\mathcal{M}_1(S)$ contains a real surface. Since, on the other hand, $\M_1(S)$ is connected (see Corollary 4.2.4 in \cite{Cisem1}), this implies the statement.

The above algorithm fails for the set of singularities $S_1=\mathbf{A}_7\oplus\mathbf{A}_6\oplus\mathbf{A}_3\oplus\mathbf{A}_2$ and $S_2=\mathbf{D}_7\oplus\mathbf{A}_6\oplus\mathbf{A}_3\oplus\mathbf{A}_2$ (exceptional cases  listed in the statement), \emph{i.e.}, computer aided calculations confirm that there does not exist a pair $(n,\kappa)$ and $\varphi\in O_{\pm h}(S_h)$ satisfying Lemma \ref{main.lemma} and $\tilde g_n(S_h,\kappa)\neq\emptyset$. Since $\operatorname{rk} S_1=\operatorname{rk} S_2 =18$, by Corollary \ref{corlem2}, the statement follows. $\qed$

Although the implemented calculations by GAP~\cite{GAP} completes the proof for the exceptional cases $S_1$ and $S_2$,  in the following two subsections, we provide explicit details, just to illustrate how many things may go wrong in  constructing  a skew-automorphism on the abstract homological types extending $S_1$ and $S_2$.

Before continuing, we make the following observation to which we will refer in the proof for the two exceptional cases.
\begin{observation}\label{observation123}
Assume that $\operatorname{rk} S=18$ and the group $\disc_2(\tilde{S}_h\oplus \mathbb{Z}a)$ is the orthogonal sum of cyclic groups with generators $\alpha_i$, where $\alpha_0=\alpha_{[2]}$ is the generator of $\disc_2\mathbb{Z}a$, and $\operatorname{order}(\alpha_i)=4$ or $8$ for all $i$. Assume further that the action of $O_{\pm h}(\tilde{S}_h)\times\{\pm \id_a\}$ on $\disc (\tilde{S}_h\oplus\mathbb{Z}a)$ is generated by involutions $\alpha_i\mapsto\pm\alpha_i$. Let the $2$-primary part of the kernel $\K_{[2]}$ be generated by a single element $\vartheta=\kappa_{[2]}+n\alpha_{[2]}$ of the form $k\alpha_i+l\alpha_j+\vartheta'$, $k,l\in\Z$ where each $k\alpha_i$, $l\alpha_j$ and $\vartheta'$ has order at least $4$ and $\vartheta'$ is a combination of generators other than $\alpha_i,\alpha_j$. Then, in the three cases considered below, the involution $\varphi\oplus t_a$ reverses $\alpha_i$ and $\alpha_j$ (recall that $\varphi(\kappa)=-\kappa$) and the element $\nu$ described below has order at least $4$ in $\K^{\bot}/\K$ and is reversed by $\varphi\oplus t_a$, contrary to Lemma~\ref{main.lemma}(3). Hence by Corollary~\ref{corlem2}, $\tilde{S}_h$ does not extend to an abstract homological type admitting a skew-automorphism.
\begin{enumerate}
  \item If $\vartheta=(\pm \alpha_i\pm\alpha_j)+\vartheta'$ with $\operatorname{order}(\alpha_j)=4$, then $\nu$ is one of $\alpha_i\pm\alpha_j$
  \item If $\vartheta=(\pm 2\alpha_i\pm\alpha_j)+\vartheta'$ with $\operatorname{order}(\alpha_j)=4$, then $\nu$ is one of $\alpha_i\pm\alpha_j$
  \item If $\vartheta=(\pm \alpha_i\pm\alpha_j)+\vartheta'$ with $\operatorname{order}(\alpha_j)=4$, then $\nu$ is one of $2\alpha_i\pm\alpha_j$
\end{enumerate}
\end{observation}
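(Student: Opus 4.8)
The plan is to verify Observation~\ref{observation123} by a direct computation of the group $\K^{\bot}/\K$ together with the induced action of $\varphi\oplus t_a$, exploiting the very explicit description of $\K^{\bot}/\K$ developed in \S\ref{description.KperbmodK}. First I would reduce to the $2$-primary part: by Lemma~\ref{orth.summand} and Lemma~\ref{kappaperb}, the odd-primary contributions split off as orthogonal direct summands on which $\varphi\oplus t_a$ already acts correctly, so the only obstruction to condition~(3) of Lemma~\ref{main.lemma} can live in $\disc_2$. Thus it suffices to produce a single element $\nu\in\K^{\bot}_{[2]}/\K_{[2]}$ of order $\geq 4$ that is sent to its negative by $\varphi\oplus t_a$; since $\varphi\oplus t_a$ is an \emph{involution}, an element of order $\geq 4$ that is negated cannot be fixed, so this contradicts Lemma~\ref{main.lemma}(3) and, via Corollary~\ref{corlem2}, proves the non-extension claim.

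Next I would pin down the action. By hypothesis $\disc_2(\tilde S_h\oplus\Z a)=\bigoplus_i\langle\alpha_i\rangle$ with each $\ord(\alpha_i)\in\{4,8\}$, and the relevant group $O_{\pm h}(\tilde S_h)\times\{\pm\id_a\}$ acts through sign changes $\alpha_i\mapsto\pm\alpha_i$. Since $\vartheta=\kappa_{[2]}+n\alpha_{[2]}$ generates $\K_{[2]}$ and $\varphi(\kappa)=-\kappa$ with $t_a(\alpha)=-\alpha$, we get $(\varphi\oplus t_a)(\vartheta)=-\vartheta$; because the action is diagonal in the $\alpha_i$, this forces the sign on each generator actually occurring in $\vartheta$ with order-$\geq 4$ coefficient to be $-1$. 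In particular, in all three listed forms of $\vartheta$, both $\alpha_i$ and $\alpha_j$ are reversed by $\varphi\oplus t_a$. Then in case~(1) one checks $\nu=\alpha_i\pm\alpha_j\in\K^\perp_{[2]}$ (using $\vartheta\cdot\nu=0$ in $\Q/\Z$ for the correct sign choice, via $\ord(\alpha_j)=4$), that $\nu$ has order $\geq 4$ modulo $\K$ (it is not a multiple of $\vartheta$ nor killed by $\vartheta$), and that $\nu\mapsto-\nu$; cases~(2) and~(3) are the analogous short verifications with $\nu=\alpha_i\pm\alpha_j$ and $\nu=2\alpha_i\pm\alpha_j$ respectively, the only point being to pick the sign making $\nu$ orthogonal to $\vartheta$ and to confirm that the relevant order does not drop to $2$ upon passing to $\K^\perp/\K$.

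The routine but slightly delicate part is the bookkeeping that $\nu$ genuinely retains order $\geq 4$ in the quotient $\K^\perp/\K$: one must check that no $\Z$-multiple $j\nu$ with $j\in\{1,2\}$ lies in $\langle\vartheta\rangle$, and here the assumption that $\vartheta'$ is supported away from $\alpha_i,\alpha_j$ and that $k\alpha_i$, $l\alpha_j$ each have order $\geq 4$ is exactly what rules this out — a multiple of $\vartheta$ of small order would force a small-order contribution in the $\alpha_i$ or $\alpha_j$ coordinate. The genuine obstacle, i.e.\ the reason this is an \emph{observation} rather than something automatic, is that it is a purely case-dependent phenomenon: for generic $\vartheta$ the negated element of order $\geq 4$ may fail to exist (one could have $\nu$ of order exactly $2$, where negation is harmless), so the content is precisely the enumeration showing that in the three configurations that actually arise for $S_1$ and $S_2$ one is always in the bad case. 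I expect the proof of this observation to consist of these three explicit sign-and-order checks against the $\K^\perp/\K$ formulas of \S\ref{description.KperbmodK}, with no conceptual difficulty beyond careful signs.
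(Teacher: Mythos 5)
Your proposal is correct and is essentially the paper's own argument: the paper gives no separate proof, but the statement of the observation already contains exactly this reasoning (the coefficients of $\alpha_i,\alpha_j$ in $\vartheta$ having order $\geq 4$ forces $\varphi\oplus t_a$ to negate them since $\vartheta\mapsto-\vartheta$, and the exhibited $\nu\in\K^{\bot}$ of order $\geq 4$ in $\K^{\bot}/\K$ is then negated, contradicting Lemma~\ref{main.lemma}(3) and hence Corollary~\ref{corlem2}), with the sign-and-order bookkeeping you describe deferred to the explicit case analysis in \S\ref{S1}--\S\ref{S2}. The one detail to watch is that cases (1) and (3) carry identical hypotheses as printed and are really distinguished by $\ord(\alpha_i)$ ($4$ versus $8$), so the existence of a sign making $\vartheta\cdot\nu=0$ hinges on $\ord(\alpha_i)$ as well, not only on $\ord(\alpha_j)=4$ as your sketch suggests.
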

\subsection{The set of singularities $S=\mathbf{D}_7\oplus\mathbf{A}_6\oplus\mathbf{A}_3\oplus\mathbf{A}_2$}\label{exceptional1}\label{S1}
One has
$$\disc \tilde{S}_h = \disc S_h\cong\textstyle[\frac{1}{4}]\oplus[-\frac{6}{7}]\oplus[-\frac{3}{4}]\oplus[-\frac{2}{3}]\oplus[
\frac{1}{4}].$$
Consider an integer $a^2\mathrel| 2\operatorname{exp} (\disc \tilde{S}_h)= 2^3\cdot3\cdot7$, \emph{i.e.}, $a^2= 2^N\cdot3^r\cdot7^s$, where $N\in\{1,2,3\}$, $r\in\{0,1\}$ and $s\in\{0,1\}$, see \eqref{adivides2exp}. With $a^2$ fixed, consider a pair $(n,\kappa)$, where $n=1$ or $2$ and $\kappa\in \disc_2\tilde{ S}_h$, as in Lemma \ref{main.lemma}. Fix the generators
$$\text{$\alpha_1$ for $\disc \mathbf{D}_7\cong\textstyle[\frac{1}{4}]$,\quad $\alpha_2$ for $\disc \mathbf{A}_3\cong\textstyle[-\frac{3}{4}]$,\quad$\alpha_3$ for $\disc \mathbb{Z}h\cong\textstyle[\frac{1}{4}]$},$$
for the $2$-primary part. It is immediate (\emph{cf}. Remark~\ref{sym}) that the action of $O_{\pm h}(\tilde{S}_h)$ on $\disc \tilde{S}_h$ is generated by the involutions $\alpha_i\mapsto\pm\alpha_i$ with $i=1,2,3$, as in Observation \ref{observation123}.
\subsubsection{The case $N=1$}\label{subN1}
Then $n=2$ (as $\disc \tilde{S}_h$ does not contain any cyclic direct summand of order $2$). Hence, we have $\kappa_{[2]}=0$ and $\ell_2(\K^{\bot}/\K)= 4$ , which implies $\tilde{g}_n(\tilde{S}_h,\kappa)=\emptyset$ by Theorem \ref{th.N.existence}. 
\subsubsection{The case $N=2$ and $n=2$}\label{subN2n2}
Then we have $\ell_2(\K^{\bot}/\K)= 4$ by \eqref{N1N2.7} and the respective item in section \ref{description.KperbmodK}, which implies $\tilde{g}_n(\tilde{S}_h,\kappa)=\emptyset$ by Theorem \ref{th.N.existence}.
\subsubsection{The case $N=2$ and $n=1$}\label{N2n1}
For an odd prime $p$, the group $\langle\kappa_{[p]}\rangle$ is an orthogonal summand in the cyclic group $\disc_p \tilde{S_h}$ (by Lemma~\ref{orth.summand}), hence, in this particular case, we have either $\kappa_{[p]}=0$, which implies $ p\nmid a^2$, or $\langle\kappa_{[p]}\rangle\cong\disc_p\tilde{S}_h$. Since $\disc_p\tilde{S}_h$ is a cyclic group, this implies an extra condition on $a^2$: one must have $a^2/3= -2 \bmod (\mathbb{Z}_2^{\times})^2$ and $a^2/7= -6 \bmod (\mathbb{Z}_7^{\times})^2$. By checking these conditions,, we  rule out the cases $a^2=4\cdot3,4\cdot7,4\cdot3\cdot7$ and obtain $a^2=4$. Listing  $\kappa_{[2]}\in \disc_2 \tilde{S}_h$ with
$$
   \text{$\operatorname{order }(\kappa_{[2]})=4$ and $\kappa^2_{[2]}=-\frac{1}{4}$},
$$
we arrive at $\vartheta=(\alpha_0\pm\alpha_1)\pm\alpha_2\pm\alpha_3$ as in Observation~\ref{observation123}(1), ruling this case out.
\subsubsection{The case $N=3$}
Then $n=2$ (as $\disc_2 \tilde{S}_h$ does not contain any cyclic direct summand of order $8$). As in section~\ref{N2n1}, we rule out the case $a^2=8\cdot7$ and obtain $a^2=8\delta$,  where $\delta=1,3$ or $21$. Then, listing all the vectors $\kappa_{[2]}\in \disc_2\tilde{S}_h$ satisfying
$$
   \text{$\operatorname{order }(\kappa_{[2]})=4$ and $\kappa_{[2]}^2=-\frac{\delta}{2}$},
$$
we obtain $\vartheta=(2\alpha_0\pm\alpha_i)\pm\alpha_j$ or $(2\alpha_0\pm\alpha_i)\pm\alpha_j+2\alpha_k$ as in Observation~\ref{observation123}(2), eliminating this case.
\subsection{The set of singularities $S=\mathbf{A}_7\oplus\mathbf{A}_6\oplus\mathbf{A}_3\oplus\mathbf{A}_2$}\label{S2}
One has
$$\disc \tilde{S}_h=\disc S_h\cong\textstyle[-\frac{7}{8}]\oplus[-\frac{6}{7}]\oplus[-\frac{3}{4}]\oplus[-\frac{2}{3}]\oplus[\frac{1}{4}].$$
Consider an integer $a^2\mathrel| 2\operatorname{exp} (\disc \tilde{S}_h)= 2^4\cdot3\cdot7$, \emph{i.e.}, $a^2= 2^N\cdot3^r\cdot7^s$, where $N\in\{1,2,3,4\}$, $r\in\{0,1\}$ and $s\in\{0,1\}$. As above, fix $a^2$ and consider a pair $(n,\kappa)$ (where $n=1,2$) and $\kappa_{[2]}\in \disc_2\tilde {S}_h$, as in Lemma \ref{main.lemma}.
Fix the generators
$$\text{$\alpha_1$ for $\disc \mathbf{A}_7\cong\textstyle[-\frac{7}{8}]$,\quad $\alpha_2$ for $\disc \mathbf{A}_3\cong\textstyle[-\frac{3}{4}]$,\quad$\alpha_3$ for $\disc \mathbb{Z}h\cong\textstyle[\frac{1}{4}]$},$$
for $\disc_2 \tilde{S}_h$. By Remark~\ref{sym}, the group $O_{\pm h}(\tilde{S}_h)$ acts on $\disc \tilde{S}_h$ via the involutions $\alpha_i\mapsto\pm\alpha_i$ with $i=1,2,3$, as in Observation~\ref{observation123}.

\subsubsection{The case $N=1$ is ruled out as in section~\ref{subN1}}

\subsubsection{The case $N=2$, $n=2$ is ruled out as in section~\ref{subN2n2}}
\subsubsection{The case $N=2$ and $n=1$}
We obtain $a^2=4$ as in section~\ref{N2n1}. (Note that $3$- and $7$-primary parts of $\disc \tilde{S}_h$ are the same as in section~\ref{exceptional1}). Then, listing all vectors $\kappa_{[2]}\in \disc_2\tilde{S}_h$ with
$$\text{$\operatorname{order }(\kappa_{[2]})=4$ and $\kappa^2_{[2]}=-\frac{1}{4}$},$$
we arrive at $\vartheta=(\alpha_0\pm\alpha_3)\pm2\alpha_1\pm2\alpha_2$ or $(\alpha_0\pm\alpha_2)\pm2\alpha_1$ as in Observation~\ref{observation123}(1).

\subsubsection{The case $N=3$ and $n=1$}\label{N3n1(2)}
As in section~\ref{N2n1}, we have $a^2=8\delta$,  where $\delta=1,3$ or $21$. We search for $\kappa_{[2]}\in \disc_2\tilde{ S}_h$ with
$$
   \text{$\operatorname{order }(\kappa_
   {[2]})=8$ and $\kappa_{[2]}^2=-\frac{\delta}{8}$}.
$$
\par If $\delta=1$, then there is no such element $\kappa_{[2]}$.

If $\delta=3$, then listing all such vectors $\kappa_{[2]}$, we obtain $\vartheta=(\alpha_0\pm\alpha_2)\pm3\alpha_1\pm \alpha_3$ as in Observation~\ref{observation123}(3).

If $\delta=21$, then we have $\vartheta=(\alpha_0\pm\alpha_2)\pm\alpha_1+2\alpha_3$, $(\alpha_0\pm\alpha_3)\pm3\alpha_1+2\alpha_2$, $(\alpha_0\pm\alpha_2)\pm3\alpha_1$ or $(\alpha_0\pm\alpha_3)\pm\alpha_1$ as in Observation~\ref{observation123}(3).
\subsubsection{The case $N=3$ and $n=2$}
By section~\ref{N3n1(2)}, we have $a^2=8\delta$,  where $\delta=1,3$ or $21$. Then, listing $\kappa_{[2]}\in \disc_2\tilde{ S}_h$ such that
$$
   \text{$\operatorname{order }(\kappa_{[2]})=4$ and $\kappa_{[2]}^2=-\frac{\delta}{2}$},
$$
we obtain $\vartheta=(2\alpha_0\pm \alpha_2)+4\alpha_1\pm\alpha_3$, $(2\alpha_0\pm\alpha_2)\pm\alpha_3$ or $2\alpha_0\pm2\alpha_1+2\alpha_3$, $2\alpha_0\pm2\alpha_1+2\alpha_2$. The former two cases are covered by Observation~\ref{observation123}(2). The latter two cases are ruled out as in section \ref{subN2n2}.
\subsubsection{The case $N=4$}\label{sub4}
Then $n=2$, since $\disc_2 \tilde{S}_h$ does not contain any cyclic direct summand of order $16$. On the other hand any order $8$ element in $\disc_2\tilde{S}_h$ is an orthogonal direct summand which contradicts to $n=2$.
 \end{proof}
\subsection{Simple Sextics}
In this section we consider hyperelliptic projective models $f_h\colon X\rightarrow \mathbb{P}^2$ with $h^2=2$, \emph{i.e.}, planar models. Recall that $f_h$ is a degree $2$ map ramified at a sextic curve $C\subset \mathbb{P}^2$.

A complete description of the strata $\mathcal{M}_1(S)$ of nonspecial simple sextics is given in~\cite{Alex2}. Degtyarev and Akyol also showed that the space $\mathcal{M}_1(\mathbf{A}_7\oplus\mathbf{A}_6\oplus\mathbf{A}_5)$ consists of a single component, which is hence real, but it contains no real curves (see Proposition 2.6 in \cite{Alex2}). This result is similar to our main result Theorem~\ref{principal.result} for simple quartics and our approach gives a simpler and more transparent proof which is outlined below.
\begin{proposition}[Proposition 2.6 in~\cite{Alex2}]
The stratum $\mathcal{M}_1(\mathbf{A}_7\oplus\mathbf{A}_6\oplus\mathbf{A}_5)$ contains no real curves.
\end{proposition}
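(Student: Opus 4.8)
The plan is to mimic, for $h^2=2$, the treatment of the exceptional cases $S_1,S_2$ in the proof of Theorem~\ref{principal.result}. By Theorem~\ref{real.model}, together with the fact, proved in~\cite{Alex2}, that $\mathcal{M}_1(\mathbf{A}_7\oplus\mathbf{A}_6\oplus\mathbf{A}_5)$ is irreducible, hence a single real component, it suffices to show that the unique abstract homological type $\mathcal{H}=(S\oplus\Z h\subset L)$ extending $S=\mathbf{A}_7\oplus\mathbf{A}_6\oplus\mathbf{A}_5$ admits no involutive skew-automorphism. Since $\operatorname{rk}S=18$, the transcendental lattice $T=S_h^{\bot}$ has rank $3$; hence by Proposition~\ref{rankT=3} and Remark~\ref{plusminus.h} any involutive skew-autoisometry of $T$ is, up to sign, a reflection, and by Corollary~\ref{corlem2} it suffices to prove that there is \emph{no} pair $(n,\kappa)$, $n\in\{1,2\}$, together with an involution $\varphi\in O_{\pm h}(\tilde S_h)$, satisfying Lemma~\ref{main.lemma} with $\tilde g_n(\tilde S_h,\kappa)\neq\emptyset$.

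Next I would set up the arithmetic, in parallel with \S\ref{S2}. Since the stratum is nonspecial one has $\tilde S_h=S_h$ and
\[
\disc\tilde S_h=\disc S_h\cong\textstyle[-\frac78]\oplus[-\frac67]\oplus[-\frac56]\oplus[\frac12],
\]
with $2$-primary part $[-\frac78]\oplus[\frac12]\oplus[\frac12]$, of length $3=22-\operatorname{rk}S_h$. As in Remark~\ref{sym}, because $S$ is an orthogonal sum of lattices of type $\mathbf A$, the group $O_{\pm h}(\tilde S_h)$ acts on $\disc\tilde S_h$ through the involutions $x\mapsto\pm x$ on each cyclic summand (so in particular trivially on the two order-$2$ summands). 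From~\eqref{adivides2exp} one gets $a^2\mathrel|2\operatorname{exp}(\disc T)=2^4\cdot3\cdot7$, so $a^2=2^{N}3^{r}7^{s}$ with $N\in\{1,2,3,4\}$ and $r,s\in\{0,1\}$; for each such $a^2$ and each $n$ one lists, using Lemma~\ref{main.lemma}(1), the candidate classes $\kappa$ of order $a^2/n$ with $\kappa^2=-n^2/a^2$, and then tests conditions (2) and (3) together with nonemptiness of $\tilde g_n$.

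The odd-prime constraints come first and are cheap: by Lemma~\ref{orth.summand}, for $p\in\{3,7\}$ dividing $a^2$ the cyclic group $\langle\kappa_{[p]}\rangle$ must coincide with $\disc_p\tilde S_h$ (here $\disc_3\tilde S_h=[\frac23]$ and $\disc_7\tilde S_h=[-\frac67]$), which pins down $\kappa_{[p]}^2$ and thereby forces a congruence on $a^2$ modulo squares in $\Z_p^{\times}$; these eliminate all but a short list of values of $a^2$. For each surviving value one then runs the $2$-primary analysis: enumerate the admissible $\kappa_{[2]}\in[-\frac78]\oplus[\frac12]\oplus[\frac12]$ of the prescribed order and square, use the description of $\K^{\bot}/\K$ in \S\ref{description.KperbmodK} together with Theorem~\ref{th.N.existence} to discard the cases in which $\tilde g_n=\emptyset$ (typically because $\ell_2(\K^{\bot}/\K)\in\{3,4\}$ exceeds $22-\operatorname{rk}\widetilde{M}_a=2$), and in each remaining case exhibit a cyclic summand of order $\geq4$ in $\K^{\bot}/\K$ that the induced involution $\varphi\oplus t_a$ is forced to reverse, contradicting Lemma~\ref{main.lemma}(3). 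This last step is the analogue of Observation~\ref{observation123}, and it is the main obstacle: since $\disc_2\tilde S_h$ carries two cyclic summands of order $2$ (rather than of order $4$, as in the quartic cases $S_1,S_2$), a slightly more careful bookkeeping is required to locate the offending summand. Once all values of $a^2$ are exhausted, Corollary~\ref{corlem2} shows that $\mathcal{H}$ has no involutive skew-automorphism, whence $\mathcal{M}_1(\mathbf{A}_7\oplus\mathbf{A}_6\oplus\mathbf{A}_5)$ contains no real curves.
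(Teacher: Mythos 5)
Your framework is exactly the paper's: reduce via Theorem~\ref{real.model}, connectedness of the stratum, Proposition~\ref{rankT=3} and Corollary~\ref{corlem2} to the non-existence of a pair $(n,\kappa)$ with an involution $\varphi\in O_{\pm h}(\tilde S_h)$ satisfying Lemma~\ref{main.lemma} and $\tilde g_n\neq\emptyset$; the discriminant, the description of the $O_{\pm h}$-action, the divisor bound on $a^2$, and the odd-prime eliminations via Lemma~\ref{orth.summand} all match the paper's proof. The problem is that the proof stops exactly where the content begins. You write that for each surviving $a^2$ one should ``exhibit a cyclic summand of order $\geq4$ in $\K^{\bot}/\K$ that $\varphi\oplus t_a$ is forced to reverse,'' you correctly observe that Observation~\ref{observation123} does not apply verbatim because $\disc_2\tilde S_h$ has two order-$2$ summands rather than order-$4$ ones, you call this ``the main obstacle'' --- and then you do not overcome it. As written, the decisive assertion is assumed, not proved.

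Concretely, what is missing is the enumeration itself and, above all, the one surviving hard case. After the cheap eliminations one finds: for $a^2=2$, $n=1$ and for $a^2=8$, $n=1$ there is simply no $\kappa_{[2]}$ of the required order and square (a different failure mode than the one your sketch anticipates); for $N=1$, $n=2$ and $N=2$ and $N=4$ the length or summand constraints kill $\tilde g_n$ or force a reversed $\Z/4$ as in \S\ref{description.KperbmodK}; but for $a^2=8$, $n=2$ the only candidates are $\kappa_{[2]}=\pm2\alpha_1+\alpha_2+\alpha_3$ (with $\alpha_1$ generating $\disc\mathbf{A}_7$ and $\alpha_2,\alpha_3$ the two order-$2$ generators). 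Here the order-$2$ generators are fixed by every involution, so $\varphi(\kappa)=-\kappa$ forces $\varphi(\alpha_1)=-\alpha_1$, hence $\varphi\oplus t_a$ induces $-\id$ on $\disc(\tilde S_h\oplus\Z a)$; one must then produce an explicit element of order $\geq 4$ in $\K^{\bot}/\K$, e.g.\ one of $3\alpha_0\pm\alpha_1+\alpha_3$, which has order $8$ there and is therefore reversed, contradicting Lemma~\ref{main.lemma}(3). Without this bespoke argument (or an equivalent computation) the proof is not complete; with it, your outline coincides with the paper's proof.
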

\begin{proof}
One has
$$\disc \tilde{S}_h=\disc S_h\cong\textstyle[-\frac{7}{8}]\oplus[-\frac{6}{7}]\oplus[\frac{2}{3}]\oplus[\frac{1}{2}]\oplus[\frac{1}{2}].$$
(Note that $\operatorname{rk}S_h=19$, hence by Proposition~\ref{rankT=3} it is enough to show there is no reflection). Consider an integer $a^2\mathrel| 2\operatorname{exp} (\disc \tilde{S}_h)= 2^4\cdot3\cdot7$, \emph{i.e.}, $a^2= 2^N\cdot3^r\cdot7^s$, where $N\in\{1,2,3,4\}$, $r\in\{0,1\}$ and $s\in\{0,1\}$. Similar to sections \ref{S1} and \ref{S2}, for a fixed $a^2$, we consider a pair $(n,\kappa)$ (where $n=1,2$) and $\kappa_{[2]}\in \disc_2\tilde {S}_h$, as in Lemma \ref{main.lemma}.
We fix the generators
$$\text{$\alpha_1$ for $\disc \mathbf{A}_7\cong\textstyle[-\frac{7}{8}]$,\quad $\alpha_2$ for $\disc_2 \mathbf{A}_5\cong\textstyle[\frac{1}{2}]$,\quad$\alpha_3$ for $\disc \mathbb{Z}h\cong\textstyle[\frac{1}{2}]$},$$
for $\disc_2 \tilde{S}_h$. By Remark~\ref{sym}, the group $O_{\pm h}(\tilde{S}_h)$ acts on $\disc \tilde{S}_h$ via the involutions $\alpha_i\mapsto\pm\alpha_i$ with $i=1,2,3$, as in Observation~\ref{observation123}.

\subsubsection{The case $N=1$, $n=1$ }\label{sub11}
As in section~\ref{N2n1}, we rule out the cases $a^2=2\cdot3,2\cdot7,2\cdot3\cdot7$ and obtain $a^2=2$. However, there is no element $\kappa_{[2]}\in \disc_2\tilde{S}_h$ such that
$$
   \text{$\operatorname{order }(\kappa_{[2]})=2$ and $\kappa_{[2]}^2=-\frac{1}{2}$},
$$
eliminating this case.
\subsubsection{The case $N=1$, $n=2$ is ruled out as in section~\ref{subN1} }
\subsubsection{The case $N=2$}
Then $n=2$ (as the group $\disc_2\tilde{S}_h$ does not contain any cyclic direct summand of order $4$).
As in section~\ref{N2n1}, we have $a^2=4\delta$,  where $\delta=1,3$ or $3\cdot7$. We search for $\kappa_{[2]}\in \disc_2\tilde{ S}_h$ with
$$
   \text{$\operatorname{order }(\kappa_{[2]})=2$ and $\kappa_{[2]}^2=1$}.
$$
According to \ref{N1.1} and the respective item in section \ref{description.KperbmodK}, $\K^{\bot}/\K$ has a summand $\mathbb{Z}/4$ generated by $\alpha_0+$(element of order $2$) which is reversed by $\varphi\oplus t_a$, contrary to Lemma~\ref{main.lemma}(3), and hence to Corollary~\ref{corlem2}.

\subsubsection{The case $N=3$ and $n=1$}\label{N3n1(3)}
As in section~\ref{N2n1}, we have $a^2=8$ (\emph{cf}. section~\ref{sub11}). Then, there is no $\kappa_{[2]}\in \disc \tilde{S}_h$ with
$\operatorname{order }(\kappa_{[2]})=8$ and $\kappa_{[2]}^2=-\frac{1}{8}$.
\subsubsection{The case $N=3$ and $n=2$}
By section~\ref{N3n1(3)}, we have $a^2=8$. Then, listing all elements $\kappa_{[2]}\in \disc \tilde{S}_h$ with
$$
   \text{$\operatorname{order }(\kappa_{[2]})=4$ and $\kappa_{[2]}^2=-\frac{1}{2}$},
$$
we obtain $\kappa_{[2]}+2\alpha_0=2\alpha_0\pm 2\alpha_1+\alpha_2+\alpha_3$. Arguing as in Observation~\ref{observation123}, we conclude that an involution $\varphi$ with $\varphi(\kappa)=-\kappa$ must send each generator $\alpha_i$, $i=1,2,3$ to $-\alpha_i$, thus inducing $-\id$ on $\K^{\bot}/\K$. On the other hand, one of the vectors $3\alpha_0\pm\alpha_1+\alpha_3$ is in $\K^{\bot}$ and has order $8$ in $\K^{\bot}/\K$. This contradicts to Lemma~\ref{main.lemma}(3), and hence to Corollary~\ref{corlem2}.
\subsubsection{The case $N=4$ is ruled out as in section~\ref{sub4}}
 \end{proof}

\bibliographystyle{amsplain}
\bibliography{mybibliography}

\def\cprime{$'$}
\providecommand{\bysame}{\leavevmode\hbox to3em{\hrulefill}\thinspace}
\providecommand{\MR}{\relax\ifhmode\unskip\space\fi MR }
\providecommand{\MRhref}[2]{%
  \href{http://www.ams.org/mathscinet-getitem?mr=#1}{#2}
}
\providecommand{\href}[2]{#2}
\begin{thebibliography}{10}

\bibitem{Alex2}
Ay\c{s}eg\"ul Akyol and Alex Degtyarev, \emph{Geography of irreducible plane
  sextics}, Proc. Lond. Math. Soc. (3) \textbf{111} (2015), no.~6, 1307--1337.
  \MR{3447795}

\bibitem{Bour}
Nicolas Bourbaki, \emph{Lie groups and {L}ie algebras. {C}hapters 4--6},
  Elements of Mathematics (Berlin), Springer-Verlag, Berlin, 2002, Translated
  from the 1968 French original by Andrew Pressley. \MR{1890629 (2003a:17001)}

\bibitem{Alex1}
Alex Degtyarev, \emph{On deformations of singular plane sextics}, J. Algebraic
  Geom. \textbf{17} (2008), no.~1, 101--135. \MR{2357681 (2008j:14061)}

\bibitem{AI}
Alex Degtyarev and Ilia Itenberg, \emph{On real determinantal quartics},
  Proceedings of the {G}\"okova {G}eometry-{T}opology {C}onference 2010, Int.
  Press, Somerville, MA, 2011, pp.~110--128. \MR{2931883}

\bibitem{Degtyarev:finiteness}
Alex Degtyarev, Ilia Itenberg, and Viatcheslav Kharlamov, \emph{Finiteness and
  quasi-simplicity for symmetric {$K3$}-surfaces}, Duke Math. J. \textbf{122}
  (2004), no.~1, 1--49. \MR{2046806}

\bibitem{GAP}
The GAP~Group, \emph{{GAP -- Groups, Algorithms, and Programming, Version
  4.5.6}}, 2015.

\bibitem{Gauss}
Carl~Friedrich Gauss, \emph{Disquisitiones arithmeticae}, Springer-Verlag, New
  York, 1986, Translated and with a preface by Arthur A. Clarke, Revised by
  William C. Waterhouse, Cornelius Greither and A. W. Grootendorst and with a
  preface by Waterhouse. \MR{837656 (87f:01105)}

\bibitem{Cisem1}
\c{C}isem G\"{u}ne\c{s}~Akta\c{s}, \emph{Classification of simple quartics up
  to equisingular deformation}, Hiroshima Math. J. \textbf{47} (2017), no.~1,
  87--112. \MR{3634263}

\bibitem{Kharlamov}
Viatcheslav Kharlamov, \emph{On the classification of nonsingular surfaces of
  degree {$4$} in {${\bf R}{\rm P}^{3}$} with respect to rigid isotopies},
  Funktsional. Anal. i Prilozhen. \textbf{18} (1984), no.~1, 49--56. \MR{739089
  (85m:14034)}

\bibitem{periodmap}
Vik.~S. Kulikov, \emph{Surjectivity of the period mapping for {$K3$} surfaces},
  Uspehi Mat. Nauk \textbf{32} (1977), no.~4(196), 257--258. \MR{0480528 (58
  \#688)}

\bibitem{Milnor.Sym.bilinear}
John Milnor and Dale Husemoller, \emph{Symmetric bilinear forms},
  Springer-Verlag, New York-Heidelberg, 1973, Ergebnisse der Mathematik und
  ihrer Grenzgebiete, Band 73. \MR{0506372}

\bibitem{MM3}
Rick Miranda and David~R. Morrison, \emph{Embeddings of integral quadratic
  forms}, electronic,
  \verb+http://www.math.ucsb.edu/~drm/manuscripts/eiqf.pdf+, 2009.

\bibitem{Niku2}
V.~V. Nikulin, \emph{Integer symmetric bilinear forms and some of their
  geometric applications}, Izv. Akad. Nauk SSSR Ser. Mat. \textbf{43} (1979),
  no.~1, 111--177, 238. \MR{525944 (80j:10031)}

\bibitem{Niku4}
\bysame, \emph{On connected components of moduli of real polarized {$K3$}
  surfaces}, Izv. Ross. Akad. Nauk Ser. Mat. \textbf{72} (2008), no.~1,
  99--122. \MR{2394973}

\bibitem{Sturmfels}
John~Christian Ottem, Kristian Ranestad, Bernd Sturmfels, and Cynthia Vinzant,
  \emph{Quartic spectrahedra}, Math. Program. \textbf{151} (2015), no.~2, Ser.
  B, 585--612. \MR{3348164}

\bibitem{K3}
I.~I. Pjatecki{\u\i}-{\v{S}}apiro and I.~R. {\v{S}}afarevi{\v{c}},
  \emph{Torelli's theorem for algebraic surfaces of type {${\rm K}3$}}, Izv.
  Akad. Nauk SSSR Ser. Mat. \textbf{35} (1971), 530--572. \MR{0284440}

\bibitem{Donat}
B.~Saint-Donat, \emph{Projective models of {$K-3$} surfaces}, Amer. J. Math.
  \textbf{96} (1974), 602--639. \MR{0364263 (51 \#518)}

\bibitem{Serre.Cours}
Jean-Pierre Serre, \emph{Cours d'arithm\'{e}tique}, Presses Universitaires de
  France, Paris, 1977, Deuxi\`{e}me \'{e}dition revue et corrig\'{e}e, Le
  Math\'{e}maticien, No. 2. \MR{0498338}

\bibitem{Shimada.Maximizing}
Ichiro Shima, \emph{On the connected components of the moduli of polarized k3
  surfaces}, electronic,
  \verb+http://www.math.sci.hiroshima-u.ac.jp/~shimada/preprints/connected_components/connK3.pdf+,
  2009.

\bibitem{Shimada.connEllK3}
I.~{Shimada}, \emph{{Connected components of the moduli of elliptic K3
  surfaces}}, ArXiv e-prints (2016).

\bibitem{SiegelI}
Carl~Ludwig Siegel, \emph{\"{U}ber die analytische {T}heorie der quadratischen
  {F}ormen}, Ann. of Math. (2) \textbf{36} (1935), no.~3, 527--606.
  \MR{1503238}

\bibitem{SiegelII}
\bysame, \emph{\"{U}ber die analytische {T}heorie der quadratischen {F}ormen.
  {II}}, Ann. of Math. (2) \textbf{37} (1936), no.~1, 230--263. \MR{1503276}

\bibitem{SiegelIII}
\bysame, \emph{\"{U}ber die analytische {T}heorie der quadratischen {F}ormen.
  {III}}, Ann. of Math. (2) \textbf{38} (1937), no.~1, 212--291. \MR{1503335}

\bibitem{Urabe2}
Tohsuke Urabe, \emph{Elementary transformations of {D}ynkin graphs and
  singularities on quartic surfaces}, Invent. Math. \textbf{87} (1987), no.~3,
  549--572. \MR{874036 (88b:14031)}

\bibitem{Yang.sextics}
Jin-Gen Yang, \emph{Sextic curves with simple singularities}, Tohoku Math. J.
  (2) \textbf{48} (1996), no.~2, 203--227. \MR{1387816}

\bibitem{Yang}
\bysame, \emph{Enumeration of combinations of rational double points on quartic
  surfaces}, Singularities and complex geometry ({B}eijing, 1994), AMS/IP Stud.
  Adv. Math., vol.~5, Amer. Math. Soc., Providence, RI, 1997, pp.~275--312.
  \MR{1468283}

\end{thebibliography}

\end{document}